% August 24, 2009 : put on arXive
% September 4, 2009 : sent to J. Cuntz for Crelle's Journal
% September 28 : added propositions 5.5, 5.6 and remark 6.4, and acknowledgment and
%                reference to Ben Davison
\documentclass[a4paper,11pt]{amsart}
\usepackage[all]{xy}
\usepackage{amssymb, amscd}
\usepackage{amsopn}
\usepackage{amsmath}

% CRM format

%\setlength{\topmargin}{1.5cm}
%\setlength{\textwidth}{11.5cm}
%\setlength{\textheight}{17.5cm}
%\setlength{\footskip}{.7cm}
%\addtolength{\evensidemargin}{1mm}
%\addtolength{\oddsidemargin}{1mm}
%\setlength{\headsep}{.6cm}
%\setlength{\headheight}{4mm}

\textwidth15.1cm \textheight22.7cm \headheight12pt
\oddsidemargin.4cm \evensidemargin.4cm \topmargin0cm

%\numberwithin{equation}{section}

\newcommand{\ie}{{\em i.e.}\ }
\newcommand{\cf}{{\em cf.}\ }

\newcommand{\ko}{\: , \;}

\newcommand{\ol}[1]{\overline{#1}}

\setcounter{tocdepth}{1}

\numberwithin{equation}{subsection}
\newtheorem{theorem}[subsection]{Theorem}
\newtheorem{lemma}[subsection]{Lemma}
\newtheorem{proposition}[subsection]{Proposition}
\newtheorem{corollary}[subsection]{Corollary}
\newtheorem{remark}[subsection]{Remark}
\newtheorem{example}[subsection]{Example}
\newcommand{\reminder}[1]{}

\newcommand{\opname}[1]{\operatorname{\mathsf{#1}}}

\renewcommand{\mod}{\opname{mod}\nolimits}

\newcommand{\proj}{\opname{proj}\nolimits}
\newcommand{\Mod}{\opname{Mod}\nolimits}

\newcommand{\per}{\opname{per}\nolimits}

\newcommand{\add}{\opname{add}\nolimits}
\newcommand{\op}{^{op}}

\newcommand{\im}{\opname{im}\nolimits}

\newcommand{\Z}{\mathbb{Z}}

\newcommand{\Q}{\mathbb{Q}}

\newcommand{\iso}{\stackrel{_\sim}{\rightarrow}}
\newcommand{\liso}{\stackrel{\,\,\,_\sim}{\leftarrow}}
\newcommand{\id}{\mathbf{1}}

%
% Hom-spaces
%
\newcommand{\Hom}{\opname{Hom}}

\newcommand{\RHom}{\opname{RHom}}
\newcommand{\bp}{{\mathbf p}}

\newcommand{\Ext}{\opname{Ext}}

\newcommand{\End}{\opname{End}}

\newcommand{\ten}{\otimes}
\newcommand{\lten}{\overset{L}{\ten}}

\newcommand{\Tor}{\opname{Tor}}

%
% Calligraphic letters
%
\newcommand{\ca}{{\mathcal A}}
\newcommand{\cb}{{\mathcal B}}
\newcommand{\cc}{{\mathcal C}}
\newcommand{\cd}{{\mathcal D}}

\newcommand{\cF}{{\mathcal F}}

\newcommand{\ch}{{\mathcal H}}

\newcommand{\cm}{{\mathcal M}}
\newcommand{\cn}{{\mathcal N}}

\newcommand{\cp}{{\mathcal P}}
\newcommand{\cR}{{\mathcal R}}

\newcommand{\ct}{{\mathcal T}}

\newcommand{\cz}{{\mathcal Z}}

\newcommand{\eps}{\varepsilon}
\renewcommand{\phi}{\varphi}
\newcommand{\del}{\partial}

\renewcommand{\tilde}[1]{\widetilde{#1}}

\def\r{\rightarrow}

\def\ldb{\mathopen{\{\!\!\{}} \def\rdb{\mathclose{\}\!\!\}}}

%\input {blb}
%
%  blb.tex
%
\let\blb\mathbb

\def \TT{{\blb T}}

\def\id{\text{id}}

\def\Der{\operatorname{Der}}

\def\Mod{\operatorname{Mod}}
\def\mod{\operatorname{mod}}

\def\Ext{\operatorname {Ext}}
\def\Hom{\operatorname {Hom}}

\def\End{\operatorname {End}}
\def\RHom{\operatorname {RHom}}

\def\im{\operatorname {im}}

\def\Tor{\operatorname {Tor}}
\def\End{\operatorname {End}}

\def\id{{\operatorname {id}}}

\def\add{\operatorname {add}}

\def\r{\rightarrow}

\begin{document}

\date{August 25, 2009}

\title{Deformed Calabi-Yau completions}
\author{Bernhard Keller \\ \\
with an appendix by Michel Van den Bergh}
\address{Bernhard Keller\\
Universit\'e Paris Diderot -- Paris 7\\
UFR de Math\'ematiques\\
Institut de Math\'ematiques de Jussieu, UMR 7586 du CNRS \\
Case 7012\\
B\^{a}timent Chevaleret\\
75205 Paris Cedex 13\\
France } \email{keller@math.jussieu.fr}

\address{Michel Van den Bergh, Departement WNI, Limburgs Universitair Centrum,
3590 Diepenbeek, Belgium.}
\email{vdbergh@luc.ac.be}

\begin{abstract} We define and investigate deformed $n$-Calabi-Yau
  completions of homologically smooth differential graded (=dg)
  categories. Important examples are: deformed preprojective algebras
  of connected non Dynkin quivers, Ginzburg dg algebras associated to
  quivers with potentials and dg categories associated to the category
  of coherent sheaves on the canonical bundle of a smooth variety. We
  show that deformed Calabi-Yau completions do have the Calabi-Yau
  property and that their construction is compatible with derived
  equivalences and with localizations. In particular, Ginzburg dg
  algebras have the Calabi-Yau property. We show that deformed
  $3$-Calabi-Yau completions of algebras of global dimension at most
  $2$ are quasi-isomorphic to Ginzburg dg algebras and apply this to
  the study of cluster-tilted algebras and to the construction of
  derived equivalences associated to mutations of quivers with
  potentials. In the appendix, Michel Van den Bergh uses non
  commutative differential geometry to give an alternative proof of
  the fact that Ginzburg dg algebras have the Calabi-Yau property.
\end{abstract}

\subjclass{18E30, 16D90}

\maketitle

\tableofcontents

\section{Introduction}

\subsection{Context and main results} This article is motivated by the theory which
links cluster algebras \cite{FominZelevinsky02} to representations
of quivers and finite-dimensional algebras, \cf~\cite{Keller08c}
for a survey. In this theory, Calabi-Yau algebras and categories
play an important r\^ole.
For example,  Geiss-Leclerc-Schr\"oer
use the $2$-Calabi-Yau property of the category of modules over a
preprojective algebra (\cf \cite{GeissLeclercSchroeer08a}), Iyama-Reiten
\cite{IyamaReiten08} study mutations using tilting modules over
$2$- and $3$-Calabi-Yau algebras related to singularities
\cite{VandenBergh04} and Amiot's construction \cite{Amiot08a} of generalized
cluster categories relies on dg algebras which are $3$-Calabi-Yau
as bimodules.
The Calabi-Yau property is also important in Kontsevich-Soibelman's
recent interpretation of cluster transformations in their study of
Donaldson-Thomas invariants and stability structures \cite{KontsevichSoibelman08}.

Let us recall the definition of the Calabi-Yau property for algebras and
for triangulated categories:
Let $A$ be an (associative, unital) algebra over a field $k$. We identify $A$-bimodules
with (right) modules over the enveloping algebra $A^e=A\ten A\op$. Let $n$ be
an integer. Recall that the algebra $A$
is {\em homologically smooth} if, as a bimodule, it admits a finite resolution
by finitely generated projective bimodules. Following Ginzburg and Kontsevich
(Definition~3.2.3 of \cite{Ginzburg06}),
it is {\em $n$-Calabi-Yau as a bimodule} if it is homologically smooth
and, in the derived category of $A$-bimodules, we have an isomorphism
\[
f: A^\vee \iso A \mbox{ such that } f^\vee=f \ko
\]
where, for a bimodule complex $M$, we denote by $M^\vee$ the
derived bimodule dual shifted by $n$ degrees
\[
M^\vee = \Sigma^n \RHom_{A^e}(M,A^e).
\]
The bimodule complex
$\RHom_{A^e}(M,A^e)$ is the {\em inverse dualizing complex} of
\cite{VandenBergh97}. If $A$ is $n$-Calabi-Yau as a bimodule,
the subcategory $\cd_{fd}(A)$ of the derived category $\cd(A)$ formed
by the modules whose homology is of finite total dimension is
{\em $n$-Calabi-Yau as a triangulated category}, \ie we have
non degenerate bifunctorial pairings
\[
\langle\,, \rangle: \Hom(M,\Sigma^n L) \times \Hom(L, M) \to k
\]
such that, for $p+q=n$, we have
\[
\langle \Sigma^p f, g \rangle = (-1)^{pq} \langle \Sigma^q g, f \rangle
\]
for all $f: M \to \Sigma^q L$ and $g: L \to \Sigma^p M$, \cf \cite{Keller08d}.

Let $A$ be any homologically smooth algebra (or more generally: dg category),
and let $n$ be an integer. One of the main objects of study of this
paper is a canonical dg algebra $\Pi_n(A)$
which we call the {\em $n$-Calabi-Yau completion}
or the {\em derived $n$-preprojective algebra}.
If $\theta$ denotes a projective resolution of the shifted bimodule dual $A^\vee$, we simply put
$\Pi_n(A)$ equal to the tensor dg algebra
\[
\Pi_n(A) = T_A(\theta)= A \oplus \theta \oplus (\theta \ten_A \theta) \oplus \cdots \ .
\]
Under Koszul duality, this construction corresponds to Ed~Segal's cyclic
completion \cite{Segal08}.
If $A$ is the path algebra of a connected non Dynkin quiver and
$n=2$, one can show that $\Pi_n(A)$ is quasi-isomorphic to the preprojective algebra of $A$,
\cf section~4.2 of \cite{Keller08d}.
If $A$ is the endomorphism algebra of a tilting object in the derived category
of quasi-coherent sheaves on a smooth algebraic variety $X$ of dimension $n-1$
(or more generally, the derived endomorphism algebra of
any compact generator \cite{BondalVandenBergh03}), then
the derived category of $\Pi_n(A)$ is triangle equivalent to the derived
category of quasi-coherent sheaves on the total space of the canonical bundle of
$X$, \cf \cite{Takahashi08}. We will show that
$\Pi_n(A)$ is always $n$-Calabi-Yau as a bimodule and that the
construction $A \mapsto \Pi_n(A)$ is equivariant under derived Morita
equivalences and compatible with (dg) localizations.

Let $c$ be a Hochschild cycle of degree $n-2$ of $A$. It yields a
canonically defined morphism $\delta: \theta \to A$ of degree $1$.
We define the {\em deformed $n$-Calabi-Yau completion} or {\em
deformed derived $n$-preprojective algebra} $\Pi_n(A,c)$ to be
obtained from $\Pi_n(A)$ by deforming the differential of the tensor
algebra using $\delta$. More intrinsically, the dg algebra $\Pi_n(A,c)$
can be constructed as a homotopy pushout from the Calabi-Yau
completion $\Pi_{n-1}(A)$ as suggested in \cite{Davison09}.
One can show that deformed preprojective
algebras \cite{CrawleyBoeveyHolland98} of connected non Dynkin
quivers are obtained in this way for $n=2$. For $n=3$, the (non
complete) Ginzburg dg algebra (\cf section~4.2 of \cite{Ginzburg06})
associated with a quiver $Q$ and a potential $W$ becomes an example.
Indeed, it is quasi-isomorphic to $\Pi_3(kQ, c)$, where $c$ is the
image of $W$, considered as an element of the zeroth cyclic homology
of $A$, under Connes' map $B$. We refer to \cite{Ginzburg06} for a
wealth of examples related to the Ginzburg dg algebra. Our main
results state that $\Pi_n(A,c)$ is $n$-Calabi-Yau as a bimodule and
that the construction taking $(A,c)$ to $\Pi_n(A,c)$ is equivariant
under derived Morita equivalences and compatible with
localizations. In particular, we obtain that the Ginzburg dg algebra
is always $3$-Calabi-Yau. When informed of this fact, Michel Van den
Bergh provided an alternative proof \cite{VandenBergh07}, based on
noncommutative geometry. He has kindly made his proof available in the appendix
to this paper. The Calabi-Yau property of the Ginzburg dg algebra is
an important ingredient of Amiot's construction \cite{Amiot08a}
of the generalized cluster category associated to an algebra of
global dimension $\leq 2$ or a Jacobi-finite
quiver with potential. This construction in turn is an important
ingredient in the proof of the periodicity conjecture sketched in
section~8 of \cite{Keller08c}.

We compute deformed Calabi-Yau completions of most `homotopically
finitely presented dg categories' (\cf section~\ref{ss:CY-completion-of-htpfp-dg-categories}
for the definition) and use this to show that deformed
$3$-Calabi-Yau completions of algebras of global dimension at most
$2$ are quasi-isomorphic to Ginzburg dg algebras. A related
statement was proved independently by Ginzburg in \cite{Ginzburg08}.
As a corollary, we obtain that cluster-tilted algebras
\cite{BuanMarshReiten04} are Jacobian algebras of quivers with potentials, a
result that was proved independently by Buan-Iyama-Reiten-Smith
\cite{BuanIyamaReitenSmith08} using completely different methods.

As an application of the derived Morita equivariance of the construction of
deformed Calabi-Yau completions, we obtain a new construction of the
derived equivalence associated \cite{KellerYang09} to the mutation
of a quiver with potential \cite{DerksenWeymanZelevinsky08}. Our
approach also allows to generalize the mutation operation: For
a given quiver $Q$, each
tilting module over the path algebra $kQ$ yields a `generalized mutation'
of any quiver with potential of the form $(Q,W)$.

As an example of the localization theorem, we show that deleting
a vertex in a quiver with potential translates into a localization
of the associated Ginzburg algebra. In the case where the associated
Jacobian algebras are finite-dimensional, this localization then
yields a Calabi-Yau reduction \cite{IyamaYoshino08} of the
associated generalized cluster categories introduced by Amiot
\cite{Amiot08a}. A related result was recently obtained by
Amiot-Oppermann \cite{AmiotOppermann09}.

\subsection{Contents}
Each anti-involution $\tau: B \iso B\op$ of an algebra $B$ allows one to
define a preduality functor $M \mapsto \Hom_B(M,B)$ from the category of right $A$-modules
{\em to itself} by letting $B$ act on the target via $\tau$. The most important
example for us is the case where
$B=A\ten A\op$ and $\tau(a\ten a')=a'\ten a$. Bimodule duality is confusing
and the general context of an algebra with involution brings some
clarification. We develop the necessary material
in the setting of dg categories in section~\ref{s:preduality-functors}.

We then introduce and study the inverse dualizing complex of a homologically
smooth dg category in section~\ref{s:inverse-dualizing-complex}. We compute it
for (most) homotopically finitely presented dg categories (section~\ref{ss:htpfp-dg-categories})
and show that it behaves well under derived Morita equivalences
and localizations (proposition~\ref{prop:functoriality-of-Theta}). In particular,
homological smoothness and the Calabi-Yau property are preserved under
localizations.

We define $n$-Calabi-Yau completions in
section~\ref{s:Calabi-Yau-completions} and show that their
construction is compatible with derived Morita equivalences and
localizations
(proposition~\ref{prop:Morita-equivariance-CY-completion} and
theorem~\ref{thm:localization-CY-completion}). We show that Calabi-Yau
completions do have the Calabi-Yau property in
theorem~\ref{thm:CY-completion-is-CY}.  In
section~\ref{s:deformed-CY-completions}, we construct deformed
Calabi-Yau completions, prove that they have the Calabi-Yau property
(theorem~\ref{thm:defo-CY-completion-is-CY}), identify them with with
homotopy pushouts
(proposition~\ref{prop:defo-CY-completion-htpy-pushout}) and show
that their construction is compatible with derived Morita equivalences
and localizations (theorem~\ref{thm:localization-defo-CY-completion}).

After a reminder on Hochschild and cyclic homology of tensor
categories (section~\ref{ss:reminder-Hochschild-homology}), we recall
the definition of Ginzburg dg algebras in
section~\ref{ss:Ginzburg-dg-categories}. We interpret them as deformed
Calabi-Yau completions in
theorem~\ref{thm:CY-completion-is-Ginzburg-algebra}. In
section~\ref{ss:CY-completion-of-htpfp-dg-categories}, we observe that
deformed Calabi-Yau completions of homotopically finitely presented dg
categories are closely related to Ginzburg dg algebras.  We use this
in theorem~\ref{thm:3-CY-completion-of-2-dim-alg} to show that any
deformed $3$-Calabi-Yau completion of an algebra of global dimension
$\leq 2$ is a Ginzburg dg algebra. We apply this in
section~\ref{ss:Application-to-cluster-tilted-algebras} to show that
all cluster-tilted algebras are Jacobian algebras.

In the final section~\ref{s:particular-cases-localization-Morita-equivalence}, we
give two more applications of our general results to the study of mutations and of generalized
cluster categories. In corollary~\ref{cor:deleting-a-vertex-is-localization},
we show that deleting a vertex in a quiver $Q$ translates into a localization
of the Ginzburg algebra associated with any quiver with potential of the
form $(Q,W)$. In theorem~\ref{thm:localization-and-reduction} we prove that
in the associated generalized cluster categories, the localization yields
a Calabi-Yau reduction. We establish the link to Amiot-Oppermann's result
in section~\ref{ss:deleting-a-sink}. Finally, in section~\ref{ss:generalized-mutations},
we show that if $(Q,W)$ is a quiver with potential and $T$ any tilting
module for the path algebra $kQ$, there is an associated `generalized pre-mutation'
for $(Q,W)$. In particular, from the classical APR-tilts \cite{AuslanderPlatzeckReiten79},
one obtains the pre-mutation as defined in \cite{DerksenWeymanZelevinsky08}
and the associated derived equivalence of \cite{KellerYang09}.

In the appendix, Michel Van den Bergh uses non commutative differential geometry to
give an alternative proof of the fact that Ginzburg dg algebras have
the Calabi-Yau property.

\section*{Acknowledgments}

I thank Alastair King for helpful discussions and for suggesting the
name `Calabi-Yau completion'. I am grateful to Michel Van den Bergh
for agreeing to include his own proof of the Calabi-Yau property for
dg Ginzburg algebras as an appendix to this article. I am indebted
to Claire Amiot and Steffen Oppermann for letting me know about
their recent work and for pointing out a gap in a previous version
of the proof of theorem~\ref{thm:localization-and-reduction}. I
thank both of them as well as Ben Davison, 
Osamu Iyama, David Ploog, Rapha\"el Rouquier,
Michel Van den Bergh and Dong Yang for stimulating questions and
conversations.

\section{Preduality functors}
\label{s:preduality-functors}

\subsection{From involutions to preduality functors} \label{ss:from-involutions-to-weak-dualities}
Let $k$ be a commutative ring and $A$ an (associative, unital)
$k$-algebra. Let $\tau$ be an {\em involution on $A$}, \ie an
isomorphism from $A$ to the opposite algebra $A\op$ whose square
is the identity. Let $\Mod A$ denote the category of right
$A$-modules. If $M$ is a right $A$-module, the dual
\[
A^* = \Hom_A(M,A)
\]
becomes a left $A$-module via the left action of $A$ on itself, that
is to say, for an element $a\in A$ and an $A$-linear map $f$ from
$M$ to $A$, we define $af$ by
\[
(af)(m)= a f(m) \ko
\]
where $m$ runs through the elements of $M$. Now for any left
$A$-module $N$, we define the {\em conjugate right $A$-module}
$\ol{N}$ to be the abelian group $N$ endowed with the right action
by $A$ defined by
\[
n a = \tau(a) n \ko
\]
where $n$ is an element of $N$ and $a$ an element of $A$. In
particular, if $M$ is a right $A$-module, we obtain the {\em dual
right $A$-module}
\[
M^\vee =\ol{M^*}.
\]
The functor
\[
V: \Mod A \to (\Mod A)\op \]
taking $M$ to $VM=M^\vee$ together with
the natural transformation
\[
\phi:  M \to VVM
\]
given by evaluation
defines a {\em preduality functor} on the category $\Mod A$, \ie
the composition
\[
\xymatrix{ V \ar[r]^-{\phi V} & VVV \ar[r]^{V\phi} & V}
\]
is the identity. Equivalently, the map $f \mapsto \phi \circ f^\vee$
is a bijection
\[
\Hom_A(L, M^\vee) \iso \Hom_A(M, L^\vee)
\]
bifunctorial in the $A$-modules $L$ and $M$. Notice that
the left hand side is in canonical bijection with the set of {\em sesquilinear forms
on $L\times M$},
\ie maps
\[
s: L \times M \to A
\]
such that $s(l a , m) = \tau(a) s(l,m)$ and $s(l, ma)= s(l, m) a$ for
all $l\in L$, $m\in M$ and $a\in A$. Similarly, the right hand side
is in bijection with the set of sesquilinear forms on $M\times L$. The
bijection then corresponds to mapping a sesquilinear form $s$ to the
form $\tau \circ s \circ \sigma$, where $\sigma$ exchanges the two
factors of the product.

To say that $(V,\phi)$ is
preduality is also equivalent to saying that
the pair
\[
\xymatrix{
\Mod A \ar[d]<1ex>^{V} \\ (\Mod A)\op. \ar[u]<1ex>^{V\op} }
\]
together with the morphisms
\[
\phi: V V\op \to \id \mbox{ in } \Mod A \mbox{ and }
\phi: \id \to V\op V \mbox{ in } (\Mod A)\op
\]
is a pair of adjoint functors. So a preduality functor could also
be called a self-coadjoint functor.

If $(V,\phi)$ is a preduality functor, then so is $(V, -\phi)$.
An $A$-module $M$ is {\em reflexive for $V$}
is $\phi M$ is an isomorphism. For example, all finitely generated projective
$A$-modules are reflexive. A duality functor is a preduality functor
$(V,\phi)$ with invertible $\phi$. The restriction of a preduality functor
to the subcategory of reflexive objects is a duality functor.

\subsection{Extension of preduality functors to module categories}
\label{ss:preduality-functors-on-module-categories}
Now let $\ca$ be a $k$-category. By definition, the category $\Mod \ca$
of (right) $\ca$-modules is the category of $k$-linear functors
\[
M : \ca\op \to \Mod k.
\]
Suppose that $V$ is a preduality functor on $\ca$ and $\phi: \id \to VV$
the corresponding adjunction morphism. A {\em left $\ca$-module} is
a $k$-linear functor $N: \ca \to\Mod k$. Its {\em conjugate right module}
is the composition $\ol{N}=N\circ V$. The {\em dual left module $M^*$} of
a right $\ca$-module $M$ is the module given by
\[
X \mapsto \Hom_\ca(M, \ca(?,X)) \ko
\]
where $X$ runs through the objects of $\ca$. The {\em dual} (or, more
precisely, {\em $V$-dual}) of a right $\ca$-module $M$ is
\[
M^\vee = \ol{M^*}.
\]
It is given by
\[
X \mapsto \Hom_\ca(M, \ca(?,VX))\ko
\]
where $X$ runs through the objects of $\ca$. Let $L$ and $M$ be right
modules. Then the set
\[
\Hom(L, M^\vee)
\]
is in bijection with the set of {\em sesquilinear forms on $L\times M$}, \ie
the families of maps
\[
s_{X,Y} : LY \times MX \to \ca(X,VY)
\]
bifunctorial in the objects $X$ and $Y$ of $\ca$. By assumption
on $V$, we have a canonical bifunctorial bijection
\[
\theta: \ca(X,VY)\to \ca(Y, VX).
\]
By taking $s_{X,Y}$ to $\theta \circ s_{X,Y} \circ \sigma$,
where $\sigma$ exchanges the two factors, we obtain a bifunctorial
bijection
\[
\Hom(L, M^\vee) \to \Hom(M, L^\vee).
\]
It corresponds uniquely to a natural transformation
\[
\tilde{\phi} : M \to \tilde{V}\tilde{V}M\ko
\]
where $\tilde{V}M=M^\vee$. We conclude that
$(\tilde{V}, \tilde{\phi})$ is a preduality functor
on $\Mod \ca$. Notice that for a representable module
$\ca(?, X)$, we have a canonical isomorphism
\[
\tilde{V}(\ca(?,X)) \iso \ca(X,V?) \iso \ca(?, VX)
\]
and $\tilde{\phi}$ is induced by $\phi$ for such modules.
Thus the pair $(\tilde{V}, \tilde{\phi})$ is a preduality
functor which canonically extends $(V,\phi)$
from the subcategory of representable modules to all of $\Mod A$.
By abuse of notation, we will often write $(V,\phi)$ instead
of $(\tilde{V}, \tilde{\phi})$.

\subsection{Dg categories}
\label{ss:dg-categories}
Concerning dg categories, we follow the terminology and notations of \cite{Keller06d}.
Let us recall the most important points:
We fix a commutative ground ring $k$. Let $\ca$ be small dg $k$-category,
\ie  a small category enriched over the tensor category $\cc(k)$ of complexes
over $k$. A {\em dg $\ca$-module} is a dg functor
\[
M : \ca\op \to \cc_{dg}(k)
\]
with values in the dg category of complexes over $k$. In particular,
each object $X$ of $\ca$ gives rise to the {\em free module} (=representable
module)
$X^\wedge=\ca(?,X)$. The category of dg modules $\cc(\ca)$ has as
morphisms the morphisms of graded $\ca$-modules, homogeneous of
degree $0$ which commute with the differential. It is endowed with
a structure of Frobenius category whose conflations are the short
exact sequences of dg modules which split as sequences of graded modules.
The projective-injectives are the contractible dg modules. The associated
stable category is the homotopy category $\ch(\ca)$. It is
triangulated and its suspension functor takes a dg module $M$ to
$\Sigma M = M[1]$ whose underlying graded module has components
$(M[1](X))^p=M(X)^{p+1}$ and whose differential is $d_{M[1]}=-d_M$. The
category of {\em strictly perfect dg modules} is
the smallest subcategory of the Frobenius category $\cc(\ca)$
which contains the free dg modules and is stable under shifts, extensions
and passage to direct summands. The {\em derived category} $\cd(\ca)$
is the localization of the category $\ch(\ca)$ with respect to the
class of quasi-isomorphisms. It is a triangulated category with
suspension functor $\Sigma$. For each dg module $M$ and each
free module $X^\wedge$, we have a canonical isomorphism
\[
\Hom_{\cd(\ca)}(X^\wedge, \Sigma^n M) = H^n(M(X)).
\]
The derived category is compactly generated, in the sense
of \cite{Neeman99}, by the free modules $X^\wedge$,
$X\in\ca$. An object of $\cd(\ca)$ is defined to be {\em perfect} if
it is a compact object. The {\em perfect derived category $\per(\ca)$}
is the full subcategory of perfect objects of $\cd(\ca)$.
A dg functor $F:\ca\to\cb$ is a {\em Morita functor}
if restriction along $F$ is an equivalence from $\cd\cb$ to $\cd\ca$.
Equivalently, the total left derived
functor of the induction along $F$ is an equivalence. Still equivalently,
the morphisms
\[
\ca(X,Y) \to \cb(FX,FY)
\]
are quasi-ismorphisms for all $X$, $Y$ in $\ca$ and the objects
$F_*\ca(?,X) = \cb(?,FX)$ generate the perfect derived category $\per(\cb)$
as an idempotent complete triangulated category. In the localization
of the category of dg categories with respect to the class
of Morita functors, the set of morphisms from a dg category
$\ca$ to a dg category $\cb$ is in canonical bijection with
the set of isomorphism classes in $\cd(\ca\op\ten\cb)$
of dg $\ca$-$\cb$-bimodules $X$ such that $X(?,A)$ is
perfect as a dg $\cb$-module for each object $A$ of $\ca$,
\cf \cite{Toen07}. Two dg categories are {\em derived Morita
equivalent} if they become isomorphic in this localization.
Equivalently, they are linked by a chain of Morita functors.

\subsection{Preduality functors on dg categories}
\label{ss:preduality-on-dg-categories}
Let $\ca$ be a small dg category and $(V,\phi)$ a preduality dg functor on $\ca$.
Thus, $V$ is a dg functor $\ca\to\ca\op$ and $\phi : \id \to VV$
a natural transformation such that the map $f \mapsto V(f) \circ \phi$
is a bijection
\[
\ca(X,VY) \to \ca(Y, VX)
\]
for all objects $X$ and $Y$ of $\ca$. As in the case of the module
category over a $k$-linear category treated in
section~\ref{ss:preduality-functors-on-module-categories}, we have a
natural extension of $(V,\phi)$ to the category $\cc_{dg}(\ca)$ of
(right) dg $\ca$-modules.

Suppose from now on that $\ca$ is an exact dg category.
Recall that this means that the dg Yoneda functor
\[
\ca \to \cc_{dg}(\ca) \ko X \mapsto X^\wedge
\]
induces an equivalence onto a full subcategory which is stable
under shifts and under graded split extensions. In particular,
the category $\ca$ then has a canonical shift functor $\Sigma$
and each morphism $f$ of $Z^0\ca$ has a cone $C(f)$ whose image
under the Yoneda functor is the cone on $f^\wedge$.
In the underlying graded category $\ca_{gr}$, the cone
on a morphism $f$ from $X$ to $Y$ splits as $C(f)=Y\oplus \Sigma X$.
Let $i: Y \to C(f)$ be the inclusion and $h: X \to C(f)$ the
inclusion considered as a morphism of degree $-1$. Then the
pair $(i,h)$ is universal among the pairs consisting of a
closed morphism $j: Y \to Z$ and a morphism $l: X \to Z$ of
degree $-1$ such that $j\circ f=d(l)$.
\[
\xymatrix{X \ar[r]^f \ar@(ur,ul)[rr]^h \ar[dr]_l & Y \ar[d]^j \ar[r]^-i & C(f) \ar@{.>}[dl]\\
 & Z }
\]

Since $\ca$ is exact, the opposite dg category $\ca\op$ is also
exact. If $f: X \to Y$ is a closed morphism in $\ca$,
we can form its cone $C'(f)$ in $\ca\op$. In $\ca$, it is endowed with morphisms
$i': C(f) \to Y$ and $h': C'(f) \to Y$ such that $f\circ i'= d(h')$
and which are universal with this property. It follows that
$C'(f)$ splits as $\Sigma^{-1}Y \oplus X$ and that its
differential is given by the matrix
\[
\left[ \begin{array}{cc} -d_Y & f \\ 0 & d_X \end{array} \right].
\]
Thus, the shift $\Sigma C'(f)$ endowed with the canonical
morphisms $Y \to \Sigma C'(f)$ and $X \to \Sigma C'(f)$ is
uniquely isomorphic to the cone $C(-f)$ on the {\em opposite of $f$}.

Since $V$ is a dg functor, it preserves cones. So if $f: X\to Y$
is a closed morphism, we obtain a canonical isomorphism
\[
\Sigma VC_\ca(f) \iso C(-Vf)
\]
compatible with the (closed) inclusion  $i$ of $VX$ and the
inclusion $h$  (homogeneous of degree $-1$) of $VY$.

Let $n$ be an integer. Since $V$ is a dg functor from $\ca$ to
$\ca\op$, we have a canonical isomorphism
\[
V\Sigma^n \iso \Sigma^{-n} V.
\]
From $\phi$, we get a canonical isomorphism
\[
\psi: \id \to (\Sigma^n V)(\Sigma^n V)
\]
and it is not hard to check that $(\Sigma^n V, \psi)$ is
still a preduality dg functor.

Let $X$ be an object of $\ca$ and $f: X \to VX$ a closed
morphism. The morphism $f$ is {\em $(V,\phi)$-symmetric}
(respectively {\em antisymmetric}) if
\[
f= V(f)\circ \phi \mbox{ (respectively } f= - V(f)\circ \phi \mbox{ )}.
\]
The object $X$ is {\em reflexive} (respectively
{\em homotopy reflexive}) if $\phi : X \to VV X$
is an isomorphism (respectively if $H^0(\phi)$ is an isomorphism).
The analogue of the following proposition in a triangulated
setting is due to Balmer \cite[Theorem 1.6]{Balmer04}.

\begin{proposition}
\label{prop:symmetric-cone}
The cone on a $V$-antisymmetric closed morphism carries a canonical
$\Sigma V$-symmetric form. More precisely,
let $f: X \to X^\vee$ be a closed and $(V,\phi)$-antisymmetric morphism.
Let
\[
g: C(f) \to \Sigma V(C(f))
\]
be given by the matrix
\[
\left[ \begin{array}{cc} 0 & \id \\ \Sigma \phi & 0 \end{array} \right]:
VX\oplus \Sigma X \to \Sigma V VX \oplus \Sigma V \Sigma X.
\]
Then $g$ is a closed $(\Sigma V,\psi)$-symmetric morphism.
If $X$ is (homotopy) reflexive, then $g$ is invertible (up to homotopy).
\end{proposition}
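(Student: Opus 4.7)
The strategy is a direct computation in the graded category. Decomposing $C(f) \cong VX \oplus \Sigma X$ and, using the identification $\Sigma V C(f) \cong C(-Vf)$ recalled just before the statement together with $\Sigma V \Sigma X \cong VX$, writing $\Sigma V C(f) \cong \Sigma V V X \oplus VX$, all the structural maps become explicit $2\times 2$ block matrices. The three assertions -- closedness of $g$, the symmetry $g = \Sigma V(g) \circ \psi_{C(f)}$, and invertibility under (homotopy) reflexivity -- each reduce to matrix identities that follow from the antisymmetry $f = -V(f)\circ\phi_X$ and from the preduality axiom $V(\phi_X)\circ\phi_{VX} = \id_{VX}$.

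First I would make all the data explicit. The cone differential on $C(f)$ is the upper-triangular block with diagonal entries $d_{VX}, -d_X$ and upper-right entry $f$. Applying $\Sigma V$, using that $V$ is a dg functor reversing arrows with $V\Sigma = \Sigma^{-1}V$, one obtains an analogous triangular differential on $\Sigma V C(f) \cong C(-Vf)$ with off-diagonal entry $-Vf$. The extension of $\psi$ to dg modules (section~\ref{ss:preduality-functors-on-module-categories}), evaluated on $C(f)$ and composed with the canonical identification $(\Sigma V)^2 \cong V^2$, becomes the diagonal matrix $\diag(\phi_{VX}, \phi_{\Sigma X})$. Applying $\Sigma V$ entrywise to $g$ then yields a matrix whose two nontrivial entries are built from $V(\phi_X)$ and from the canonical identity $V(\id_{VX}) = \id_{VX}$.

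With these formulas in hand, the verification of $d(g)=0$ is a two-line block computation whose only nontrivial entry is proportional to $V(f)\circ \phi_X + f$, which vanishes by antisymmetry of $f$; the remaining entries vanish because $d\phi = 0$ and $V$ commutes with differentials. The symmetry identity $g = \Sigma V(g)\circ\psi_{C(f)}$ is then formal: composing $\Sigma V(g)$ with the diagonal $\psi_{C(f)}$ produces $V(\phi_X)\circ \phi_{VX} = \id_{VX}$ in the bottom-left slot, and by naturality of $\phi$ it produces exactly $\Sigma \phi_X$ in the top-right slot, reproducing $g$. Finally, because $g$ is anti-diagonal with entries $\id_{VX}$ and $\Sigma\phi_X$, its invertibility is equivalent to that of $\phi_X$: in the reflexive case, an explicit two-sided inverse is the anti-diagonal matrix with entries $\id_{VX}$ and $\Sigma\phi_X^{-1}$ (automatically closed because $g$ is); in the homotopy reflexive case, the same formula in $\ch(\ca)$ produces a homotopy inverse, equivalently, the mapping cone of $g$ is acyclic.

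The conceptual content is therefore minimal. The only real obstacle I anticipate is the sign bookkeeping induced by $V\Sigma = \Sigma^{-1}V$, by the opposite differential on $\ca\op$, and by the sign conventions of the extension $\widetilde{\phi}$ in section~\ref{ss:preduality-on-dg-categories}; once a consistent set of conventions is fixed, the identities drop out.
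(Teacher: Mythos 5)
Your proposal follows the same route as the paper: reduce everything to explicit $2\times 2$ block computations on $C(f)\cong VX\oplus\Sigma X$ and $\Sigma V C(f)\cong C(-Vf)$, with closedness of $g$ reducing to the antisymmetry $f+V(f)\phi_X=0$ and symmetry reducing to $V(\phi_X)\circ\phi_{VX}=\id_{VX}$ plus naturality of $\phi$. The paper expresses the same computation slightly more abstractly (via the universal properties of cone and cocone and by exhibiting $g$ as the middle morphism of a map of graded split exact sequences), but the conceptual content is identical.

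The one place your argument is weaker than the paper's is the homotopy-reflexive case. Your first formulation --- ``the same formula in $\ch(\ca)$ produces a homotopy inverse'' --- does not quite work as written: a homotopy inverse $\phi_X'$ to $\phi_X$ placed in the anti-diagonal matrix $\left[\begin{smallmatrix}0 & \Sigma\phi_X'\\ \id & 0\end{smallmatrix}\right]$ need not be a \emph{closed} morphism, because the differentials on $C(f)$ and on $\Sigma V C(f)$ have nonzero off-diagonal entries ($f$ and $-Vf$ respectively), so commutation with $d$ is not automatic the way it is for a genuine inverse of a closed bijection. Your second formulation --- the mapping cone of $g$ is acyclic --- is the right criterion, but it is asserted rather than derived. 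The clean way to close this (and what the paper does) is to observe that $(\id,g,\Sigma\phi_X)$ is a morphism of graded split exact sequences $0\to VX\to C(f)\to\Sigma X\to 0$ and $0\to VX\to\Sigma VC(f)\to\Sigma VVX\to 0$, hence a morphism of triangles in $H^0(\ca)$, and then apply the five lemma: $H^0(g)$ is invertible whenever $H^0(\phi_X)$ is. I would replace the last sentence of your invertibility paragraph with this argument.
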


\begin{proof} By the above discussion and the assumption that $f=-V(f)\circ \phi$,
the morphism $g$ is indeed well-defined and closed.
\[
\xymatrix{X \ar[r]^f \ar@(ur,ul)[rr]^h \ar[d]_\phi & VX \ar[d]^\id \ar[r]^i & C(f) \ar[d]^g\\
VVX \ar@(dr,dl)[rr]_h \ar[r]^{-Vf} & VX \ar[r]^i & \Sigma VC(f).}
\]
Clearly it is symmetric. We have
a morphism of graded split exact sequences
\[
\xymatrix{ 0 \ar[r] & VX \ar[d]^\id \ar[r] & C(f) \ar[d]_g \ar[r] & \Sigma X \ar[r] \ar[d]^{\Sigma \phi} & 0 \\
0 \ar[r] & VX \ar[r] & \Sigma VC(f) \ar[r] & \Sigma VVX \ar[r] & 0.}
\]
This implies that $C(f)$ is reflexive if $X$ is. By considering the
corresponding triangles in $H^0(\ca)$, we obtain that $H^0(g)$ is
an isomorphism if $H^0(\phi X)$ is an isomorphism.
\end{proof}

Now let $g: Y \to VY$ be a closed symmetric morphism and suppose that
$f:X\to Y$ is a closed morphism such that
\[
(Vf)\circ g \circ f=0.
\]
We then have a complex of closed morphisms
\[
\xymatrix@C=1.5cm{X \ar[r]^f & Y \ar[r]^{(Vf)\circ g}  & VX}
\]
and we can form its totalization, \ie the object $Z$
such that for $U$ in $\ca$, the complex $\ca(U,Z)$ is functorially
isomorphic to the total complex of
\[
\xymatrix@C=1.5cm{\ca(U,X) \ar[r]^{f_*} & \ca(U,Y) \ar[r]^{(Vf)_*\circ g_*}&  \ca(U,VX)} \ko
\]
where we think of $\ca(U,Y)$ as the zeroth column of the
double complex. The underlying graded object of $Z$ is isomorphic
to $\Sigma^{-1} VX\oplus Y \oplus \Sigma X$.
\begin{proposition} The graded morphism $h: Z \to VZ$ given by
$
\id_{VX}\oplus g \oplus \phi_X
$
is closed and $V$-symmetric. It is invertible (respectively invertible up to
homotopy) if $g$ is.
\end{proposition}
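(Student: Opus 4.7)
The plan is to extend the template of the preceding proposition (on symmetric cones) from a single cone to the double cone $Z$. The first step is to make the underlying graded structure of $VZ$ explicit. Since $V$ is a dg functor $\ca\to\ca\op$ and $V\Sigma^n \cong \Sigma^{-n}V$, applying $V$ to the defining complex of $Z$ and totalizing identifies $VZ$, as a graded object, with $\Sigma VVX \oplus VY \oplus \Sigma^{-1}VX$. Matching sources and targets of the three components $\id_{VX}$, $g$ and $\Sigma\phi_X$ against this decomposition shows that $h$ is a well-defined graded morphism of degree zero, of ``anti-diagonal'' shape with respect to the natural ordering of the summands of $Z$ and $VZ$.

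Next, closedness and $V$-symmetry are checked by block-matrix computation. For closedness, the three diagonal contributions to $d_{VZ}\circ h - h\circ d_Z$ vanish because $g$ is closed, $\phi$ is closed componentwise (being a natural transformation between dg functors), and the identity is closed. The two off-diagonal contributions come from the connecting maps $f$ and $(Vf)\circ g$ of the totalization; they reduce to the naturality square $\phi_Y\circ f = V(Vf)\circ\phi_X$ and to the symmetry relation $g = V(g)\circ\phi_Y$, together with the hypothesis $(Vf)\circ g\circ f = 0$ to cancel the residual term. For $V$-symmetry, writing $V(h)\circ\phi_Z$ as an anti-diagonal block matrix and comparing entry-by-entry, the three required identities are exactly: symmetry of $g$ on the middle block, and the preduality axiom $V(\phi_X)\circ\phi_{VX} = \id_{VX}$ on the two outer blocks, which pair the entries $\id_{VX}$ and $\Sigma\phi_X$ by duality.

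For the invertibility claim, the anti-diagonal shape of $h$ makes it invertible (respectively a homotopy equivalence) precisely when each of its three components is. The identity component is trivially invertible, $g$ is so by hypothesis, and $\Sigma\phi_X$ is so provided $X$ is (homotopy) reflexive, which is the implicit standing assumption in the intended applications of the proposition. I expect the main technical obstacle to be careful bookkeeping of signs coming from the totalization differential and from the shift-conjugation rule $V\Sigma\cong\Sigma^{-1}V$; once this is pinned down, the three verifications collapse to the same naturality and preduality identities that drove the proof of the preceding proposition.
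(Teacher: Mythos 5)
Your proof is correct and follows essentially the same route as the paper's: you exhibit $h$ as a componentwise morphism from the defining three-term complex of $Z$ to its $V$-dual, so that closedness reduces to the commutativity of two squares (naturality of $\phi$ together with the symmetry of $g$) and symmetry reduces to the symmetry of $g$ plus the preduality identity $V(\phi_X)\circ\phi_{VX}=\id_{VX}$ pairing the two outer blocks. Two small remarks: your observation that the invertibility claim additionally requires $X$ to be (homotopy) reflexive is correct and is in fact more precise than the statement as printed, and the hypothesis $(Vf)\circ g\circ f=0$ is really what makes $Z$ a complex ($d_Z^2=0$) rather than an ingredient in the closedness of $h$ itself, whose off-diagonal checks involve no residual term.
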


\begin{proof} We have a commutative diagram of complexes
\[
\xymatrix@C=1.6cm{ X \ar[d]_{\phi_X} \ar[r]^f & Y \ar[d]_g \ar[r]^{(VF)\circ g} & VX \ar[d]^{\id_X} \\
VVX  \ar[r]_{(VG)(VVf)} & VY \ar[r]_{VF} & VX.
}
\]
Therefore the morphism $h$ is closed. It is symmetric because $g$ and $\id_X \oplus \phi$
are symmetric.
\end{proof}

\subsection{Induction and preduality} \label{ss:induction} Let $\ca$ and $\cb$ be
two dg categories each endowed with a dg preduality functor denoted
by $(V,\phi)$. Let $F:\ca\to\cb$ be a dg functor. For a dg $\ca$-module
$M$, we denote by
\[
F_* M  \mbox{ or } M \ten_\ca \cb
\]
its induction along $F$. We assume that
we are given a morphism of dg functors
\[
FV \to VF.
\]
We wish to extend it to a compatibility morphism between induction
along $F$ and preduality with respect to $V$.

For each object $X$
of $\ca$, we have the representable left $\ca$-module $\ca(X,?)$.
Its image under induction along $F$ is $\cb(FX,?)$
and the predual of the image is
\[
\cb(FX,V?) \iso \cb(?,VFX).
\]
On the other hand, the predual of $\ca(X,?)$ is $\ca(?,VX)$ and
its image under induction is $\cb(?,FVX)$. Thus, the given morphism
$FV \to VF$ yields a natural transformation
\[
F_*(M^\vee) \to (F_* M)^\vee
\]
defined at first for representable and then for arbitrary dg
$\ca$-modules $M$.

If $M$ is a right dg $\ca$-module, then its dual
\[
M^*: X \mapsto \Hom_\ca(M, \ca(?,X))
\]
is a left dg $\ca$-module and we have a natural transformation
\[
F_* ( M^*) \to (F_* M)^*.
\]

By composing the natural transformations constructed so far, we
obtain, for each dg right $\ca$-module $M$, a natural transformation
\[
F_* (M^\vee) \to (F_* M^\vee)
\]
or, in the other notation,
\begin{equation} \label{eq:nat-trsf-induction-preduality}
M^\vee \ten_\ca \cb \to (M\ten_\ca \cb)^\vee.
\end{equation}

\begin{lemma} \label{lemma:induction-preduality}
\begin{itemize}
\item[a)] Under the natural transformation \ref{eq:nat-trsf-induction-preduality},
an element $f\ten b$ is sent to the map
\begin{equation}\label{eq:induction-preduality}
m\ten x \mapsto (-1)^{|f||b|}V(b) f(m) x.
\end{equation}
\item[b)] If the underlying graded $\ca$-module of $M$
is finitely generated projective, the transformation \ref{eq:nat-trsf-induction-preduality}
is invertible and its inverse sends an element $g$ to
\[
\sum m_i^* \ten V(g(m_i\ten \id)).
\]
where $\sum m_i \ten m_i^*$ is the Casimir element for $M$, \ie
the pre-image of the identity under the canonical
isomorphism
\[
M\ten_\ca \Hom_A(M,A) \to \Hom_\ca(M,M).
\]
\end{itemize}
\end{lemma}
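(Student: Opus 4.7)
The strategy for both parts is reduction to the case of representable dg $\ca$-modules, where the Yoneda lemma makes all ingredients explicit. For part (a), the natural transformation \eqref{eq:nat-trsf-induction-preduality} was constructed first on representables and extended by naturality; since both $M \mapsto F_*(M^\vee)$ and $M \mapsto (F_*M)^\vee$ are additive functors of $M$, the explicit formula on representables propagates to arbitrary $M$ by presenting $M$ through shifted representables. For part (b), a graded $\ca$-module that is finitely generated projective is a direct summand of a finite direct sum of shifted representables, and since both the forward map and the proposed inverse are compatible with finite direct sums, shifts, and splittings of idempotents, it suffices to verify invertibility and the explicit inverse formula on shifted representables.

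For part (a) I would set $M = \ca(?,X)$. Via Yoneda and the preduality isomorphism $\theta$ of section~\ref{ss:preduality-on-dg-categories}, $M^\vee$ is identified with $\ca(?,VX)$, so an element of $F_*(M^\vee)$ is represented by $g \otimes b$ with $g \in \ca(Y',VX)$ and $b \in \cb(Y,FY')$. Tracing through the three stages used to assemble \eqref{eq:nat-trsf-induction-preduality}, namely the trace morphism $F_*(M^*) \to (F_*M)^*$, the given natural transformation $\eta \colon FV \to VF$, and conjugation by $V$, the image becomes the map that sends $\id_X \otimes x$ in $F_*M$ to the composition $V(b) \circ \eta_X \circ Fg \circ x$. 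Rewriting in terms of the map $f \colon M \to \ca(?,VX)$ corresponding to $g$ under Yoneda yields exactly formula \eqref{eq:induction-preduality}, with the sign $(-1)^{|f||b|}$ arising from the Koszul rule when the homogeneous element $b$ is moved past $f(m)$ in the graded composition.

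For part (b), on a representable $M = \ca(?,X)$ the canonical isomorphism $M \otimes_\ca \Hom_\ca(M,\ca) \iso \Hom_\ca(M,M)$ sends $\id_X \otimes \id_X^{*}$ to $\id_M$, so the Casimir element reduces to this single term. A direct unwinding shows that applying the candidate inverse $g \mapsto \sum m_i^* \otimes V(g(m_i \otimes \id))$ to an arbitrary $g \in \Hom_\cb(F_*M, \cb(?,VY))$ and then applying the forward map \eqref{eq:induction-preduality} recovers $g$, while the composite in the opposite order returns the original element $f \otimes b$. Additivity together with stability under split idempotents then extends this to all finitely generated projective graded $M$. Well-definedness of the inverse formula (independence from the chosen decomposition $\sum m_i \otimes m_i^*$) is automatic since the Casimir element is canonical as the preimage of $\id_M$ under a fixed isomorphism.

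The main obstacle I expect is sign bookkeeping: tracking the three intermediate natural transformations assembled to produce \eqref{eq:nat-trsf-induction-preduality} and checking that the Koszul sign rule yields exactly $(-1)^{|f||b|}$ and not some shifted variant. A secondary but routine point is the compatibility of the proposed inverse with split idempotents, which legitimizes the reduction from finitely generated projective graded modules to shifted representables.
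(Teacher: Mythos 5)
Your proposal is correct, but it is worth saying that the paper offers no argument here at all beyond ``these are straightforward verifications,'' so the comparison is really between your scaffolding and the direct computation the author has in mind. Your route --- reduce to (shifted) representables via Yoneda, then extend by additivity, shifts and idempotent splittings --- is sound, but it is heavier than necessary. For part a), the transformation \ref{eq:nat-trsf-induction-preduality} is assembled from three maps each of which is already given by an explicit formula on elements of an arbitrary module $M$ (the change-of-rings map $F_*(M^*)\to (F_*M)^*$ sends $f\ten b$ to the map $m\ten x\mapsto \pm\, b\cdot F(f(m))\cdot x$, and the maps coming from $FV\to VF$ and from conjugation are likewise pointwise explicit), so one simply traces $f\ten b$ through the composite and collects the Koszul sign $(-1)^{|f||b|}$; no d\'evissage to representables is needed, although your naturality-in-$M$ argument does legitimately propagate the formula from $\ca(?,X)$ to general $M$. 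For part b), the cleaner argument is again direct: for any $M$ whose underlying graded module is finitely generated projective, the Casimir identity $\sum_i m_i\, m_i^*(m)=m$ together with the preduality relation $V(V(h))\circ\phi=h$ shows immediately that the two composites are the identity, which avoids having to check how Casimir elements restrict along split idempotents (a point your reduction silently relies on and would need to verify). Both approaches land in the same place; the direct one is what ``straightforward verification'' refers to, while yours trades a little extra bookkeeping for conceptual uniformity with the way the transformation was first constructed on representables.
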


\begin{proof} These are straightforward verifications.
\end{proof}

Let $\cd\ca$ denote the derived category of $\ca$.
We still denote by $M\mapsto M^\vee$ the total derived
functor of the duality functor and by $?\ten_\ca\cb$
the total derived functor $\cd\ca\to\cd\cb$ of the
induction functor.

\begin{lemma} \label{lemma:commutation-with-duality}
Suppose that $FV \to VF$ is a pointwise homotopy equivalence. Then
the morphism
\[
M^\vee\ten_A\cb \to (M\ten_\ca \cb)^\vee
\]
is a quasi-isomorphism for all perfect $M$. It is a quasi-isomorphism
for all $M$ if $\cb(F?,X)$ is perfect over $\ca$ for all $X$ in
$\cb$, for example if $F$ is a Morita functor.
\end{lemma}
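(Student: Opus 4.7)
The plan is to verify the statement on representables, extend it to all perfect modules by a thick-subcategory argument, and then extend to arbitrary modules using the perfectness hypothesis to ensure continuity.

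First, I would check the claim on representable dg modules. For $M = \ca(?,X)$, the extension of preduality to module categories in Section~\ref{ss:preduality-functors-on-module-categories} yields $M^\vee \simeq \ca(?,VX)$, so $M^\vee \ten_\ca \cb$ identifies with $\cb(?,FVX)$, while $(M\ten_\ca\cb)^\vee$ identifies with $\cb(?,VFX)$. Unwinding the construction of the natural transformation~(\ref{eq:nat-trsf-induction-preduality}), it reduces to the morphism induced by $FVX \to VFX$, which is a quasi-isomorphism by hypothesis. (This also matches the explicit inverse in Lemma~\ref{lemma:induction-preduality}~b) when the underlying graded module is finitely generated projective.)

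Next, for the first assertion I would observe that both $M \mapsto M^\vee \ten_\ca^{\mathbf L} \cb$ and $M \mapsto (M \ten_\ca^{\mathbf L} \cb)^\vee$ are contravariantly triangulated functors on $\cd(\ca)$: derived induction is triangulated as a derived left adjoint, and derived preduality is contravariantly triangulated. The natural transformation commutes with suspensions and cones by construction. Hence the full subcategory of objects $M \in \cd(\ca)$ for which the map is a quasi-isomorphism is a thick triangulated subcategory; since it contains the representables and these generate $\per(\ca)$ as a thick subcategory, it equals $\per(\ca)$.

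For the second assertion I would use the adjunction between derived induction and restriction to rewrite
\[
(M \ten_\ca^{\mathbf L} \cb)^\vee(X) \simeq \RHom_\ca(M,\cb(F?,VX)),
\]
showing that this target functor of $M$ is corepresented by the perfect right $\ca$-module $\cb(F?,VX)$; in particular it sends arbitrary coproducts to products. Preduality $V$ on $\ca$ allows one to pass between left and right $\ca$-modules, so the perfectness hypothesis also implies that $\cb(Y,F?)$ is perfect as a left $\ca$-module, and a dual argument shows that the source functor $M \mapsto M^\vee \ten_\ca^{\mathbf L} \cb$ is likewise continuous with the same variance. Since both functors are cohomological, agree on the compact generators $\{X^\wedge\}_{X \in \ca}$ of $\cd(\ca)$, and have matching continuity properties, they agree on all of $\cd(\ca)$. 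The delicate point is this continuity for the source functor: preduality turns coproducts into products while tensor product does not commute with products in general, so one must exploit perfectness of $\cb(F?,X)$ (suitably translated via $V$) to justify the interchange. In the Morita case this is simpler because derived induction is itself an equivalence $\cd(\ca) \iso \cd(\cb)$ and the second assertion follows from the first by direct transport.
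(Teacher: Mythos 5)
Your proposal is correct and follows essentially the same route as the paper: verify on representables, extend to perfect modules by the triangle-functor argument, and extend to all of $\cd(\ca)$ by observing that both sides turn coproducts into products (the paper phrases this as ``$?\ten_\ca\cb$ preserves arbitrary products under the perfectness hypothesis, hence $\phi$ is a morphism of triangle functors taking sums to products''). The only substantive elaboration you add is making explicit why the hypothesis on the right $\ca$-module $\cb(F?,X)$ controls the behaviour of the tensor product, which a priori involves the left $\ca$-module $\cb(Y,F?)$: you correctly note that this transference goes through the preduality $V$, together with the pointwise homotopy equivalence $FV\to VF$. One tiny imprecision: for the target functor $M\mapsto(M\lten_\ca\cb)^\vee\simeq\RHom_\ca(M,\cb(F?,VX))$, the conversion of coproducts into products holds because it is $\Hom$ into a fixed object, regardless of whether $\cb(F?,VX)$ is perfect; perfectness is only needed on the source side, exactly where you flag the ``delicate point.''
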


\begin{proof} The canonical morphism
\[
\phi M: M^\vee\ten_A\cb \to (M\ten_\ca \cb)^\vee
\]
is a quasi-isomorphism for each
representable dg module $M=\ca(?,X)$, by the assumption on $F$ and $V$.
Since $\phi$ is a morphism between triangle functors, it
is still a quasi-isomorphism for each perfect dg
module $M$. Finally, if $\cb(F?,X)$ is perfect over $\ca$ for
all $X$ in $\cb$, then the derived tensor product $?\ten_\ca \cb$
preserves arbitrary products. Then $\phi$ is a morphism between
triangle functors taking arbitrary sums to products and hence
is a quasi-isomorphism for each object $M$ of $\cd \ca$.
\end{proof}

Now for a given right dg $\ca$-module $M$, we wish to study the dg $k$-module
\[
\Hom_\cb(M\ten_\ca \cb, (M\ten_\ca \cb)^\vee)
\]
(whose $n$th component is formed by the maps of graded $\cb$-modules
which are homogeneous of degree $n$). We can think of its elements
as sesquilinear forms on $M\ten_\ca\cb$. We have an isomorphism
\[
\Hom_\cb(M\ten_\ca \cb, (M\ten_\ca \cb)^\vee) =
\Hom_\ca(M, (M\ten_\ca \cb)^\vee)
\]
and the right hand side is the target of a natural transformation
with source
\[
(M\ten_\ca\cb)^\vee \ten_\ca M^*.
\]
Thus we obtain a natural transformation
\begin{equation} \label{eq:matrix-to-form}
M^\vee \ten_\ca \cb \ten_\ca M^* \to \Hom_\cb(M\ten_\ca \cb, (M\ten_\ca \cb)^\vee).
\end{equation}
Notice that the right hand side carries a natural involution,
namely the map taking $f$ to $f^\vee \circ \phi$. The left
hand side also carries a natural involution, namely the one
which on tensors of homogeneous elements is given by
\[
m_1 \ten b \ten m_2 \mapsto (-1)^{pq+pr+qr} m_2 \ten Vb \ten m_1\ko
\]
where $p, q, r$ are the degrees of $m_1$, $f$ and $m_2$, respectively.

\begin{lemma} The map \ref{eq:matrix-to-form} is strictly compatible
with these involutions.
\end{lemma}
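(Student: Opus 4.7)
The map \ref{eq:matrix-to-form} is linear and built from natural transformations defined on pure tensors, so it suffices to check equality of the two compositions on a homogeneous elementary tensor $\omega = m_1 \otimes b \otimes m_2$ in $M^\vee \otimes_\ca \cb \otimes_\ca M^*$ of degrees $p,q,r$, after evaluating the resulting sesquilinear form on an elementary tensor $n \otimes y$ in $M \otimes_\ca \cb$. The plan is therefore to write down the image of $\omega$ under \ref{eq:matrix-to-form} explicitly, then apply each involution and compare.

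First I would unwind \ref{eq:matrix-to-form}. The first half of it is the map $M^\vee \otimes_\ca \cb \to (M \otimes_\ca \cb)^\vee$ of lemma~\ref{lemma:induction-preduality}(a), sending $m_1 \otimes b$ to the functional $n \otimes y \mapsto (-1)^{pq} V(b)\, m_1(n)\, y$. The second half is the canonical map $N \otimes_\ca M^* \to \Hom_\ca(M,N)$ applied to $N = (M\otimes_\ca\cb)^\vee$, followed by the adjunction $\Hom_\ca(M,(M\otimes_\ca\cb)^\vee)\cong\Hom_\cb(M\otimes_\ca\cb,(M\otimes_\ca\cb)^\vee)$. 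Combining these, $\omega$ is sent to an explicit sesquilinear form whose value on a pair of pure tensors can be written down in terms of $m_1, b, m_2$, the preduality $\phi$, and the preduality morphism $FV \to VF$, up to a tracked Koszul sign.

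Next, I would compute the two involutions applied to $\omega$. The right-hand involution $f \mapsto f^\vee \circ \phi$ on a sesquilinear form corresponds, by the construction in section~\ref{ss:from-involutions-to-weak-dualities}, to the map $s \mapsto V \circ s \circ \sigma$ where $\sigma$ swaps the two arguments of the form — this introduces the Koszul sign of the swap together with the sign coming from $V$ acting on morphism-valued outputs. On the other hand, the left-hand involution produces $(-1)^{pq+pr+qr}\, m_2 \otimes Vb \otimes m_1$, which, pushed through the same unwinding of \ref{eq:matrix-to-form} as above, yields a second explicit sesquilinear form. The two expressions are then compared term by term, using the bifunctorial preduality isomorphism $\theta: \ca(X,VY) \iso \ca(Y,VX)$ induced by $\phi$, together with the relation between $V$ on $\cb$ and the preduality datum $FV \to VF$ coming from section~\ref{ss:induction}.

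The main obstacle is the sign bookkeeping. One must verify that the Koszul sign $(-1)^{pq+pr+qr}$ imposed on the left-hand side equals the sum of the contributions from lemma~\ref{lemma:induction-preduality}(a) (a $(-1)^{pq}$ in each direction), the tensor--Hom adjunction $N \otimes_\ca M^* \to \Hom_\ca(M,N)$, the swap $\sigma$ in the right-hand involution, and the application of $V$ to an output of the form $V(b) m_1(n) y$ on the right-hand side. This is a routine but careful transposition calculation; once each contribution is identified, the exponents add to exactly $pq+pr+qr$, and the two sesquilinear forms agree pointwise on all homogeneous generators. This establishes strict (not merely up-to-homotopy) compatibility of \ref{eq:matrix-to-form} with the two involutions, proving the lemma.
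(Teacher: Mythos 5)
Your approach is exactly what the paper's one-line proof ("This is a straightforward verification") tacitly invokes: reduce to homogeneous elementary tensors, unwind the map \ref{eq:matrix-to-form} via lemma~\ref{lemma:induction-preduality}(a) and the tensor--Hom adjunction, apply both involutions, and check that the Koszul signs match. The decomposition into the two constituent natural transformations, the identification of the right-hand involution with $s\mapsto \theta\circ s\circ\sigma$, and the explicit sign $(-1)^{pq+pr+qr}$ on the left are all correct, so this is a faithful elaboration of the verification the paper leaves to the reader.
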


\begin{proof} This is a straightforward verification.
\end{proof}

\section{The inverse dualizing complex}
\label{s:inverse-dualizing-complex}

\subsection{Duality for bimodules} \label{ss:duality-for-bimodules}
Let $k$ be a commutative
ring and $\ca$ a dg $k$-category. We may and will assume that $\ca$ is
cofibrant over $k$, \ie each morphism complex $\ca(X,Y)$ is cofibrant
in the category of dg $k$-modules. This always holds if $k$ is a field.
Let $\ca^e$ be the dg category $\ca\ten\ca\op$. We endow it with
the involution $V$ taking a pair of objects $(X,Y)$ to $(Y,X)$ and
given on morphisms by
\[
f\ten g \mapsto (-1)^{pq} g\ten f\ko
\]
where $f$ is of degree $p$ and $g$ of degree $q$. Note that $(V,\phi)$,
where the morphism $\phi$ is the identity, is a preduality on $\ca^e$
in the sense of section~\ref{ss:preduality-on-dg-categories}.

By a {\em bimodule} we mean a right dg module $M$ over $\ca^e$.
Via the morphism
\[
M\ten \ca^e = M\ten (\ca\ten\ca\op) \iso \ca\op\ten M \ten \ca
\]
taking $m\ten (a\ten b)$ to $(-1)^{|b|(|m|+|a|)}b\ten m \ten a$,
the right $\ca^e$-module structure
yields left and right $\ca$-module structures on $M$. The right module structure
on $\ca^e$ itself is given by the multiplication of $\ca^e$:
\[
(f\ten g)(f'\ten g') = f f' \ten g'g.
\]
So right multiplication yields the `inner' bimodule structure
on $\ca^e$, whereas the left $\ca^e$-module structure on $\ca^e$ yields
the `outer' bimodule structure.

As we have seen in section~\ref{ss:preduality-on-dg-categories},
from $(V,\phi)$, we obtain a natural preduality on the exact dg category
of dg $\ca^e$-modules which takes a dg module $M$ to the conjugate
$M^\vee$ of the dual $M^*$ defined by
\[
M^*: (X,Y) \mapsto \Hom_{\ca^e}(M, \ca^e(?,(X,Y))).
\]

\begin{lemma} \label{lemma:bimodule-induction-and-preduality}
Let $F: \ca\to\cb$ be a dg functor and $P$ an $\ca$-bimodule. We identify
$F_* P= P\ten_{\ca^e} \cb^e$ with $\cb\ten_\ca P \ten_\ca \cb$ via the
map $p \ten (x\ten y) \mapsto (-1)^{|y||p\ten x|} y\ten p \ten x$.
\begin{itemize}
\item[a)] The canonical morphism constructed in section~\ref{ss:preduality-on-dg-categories}
\[
\cb\ten_\ca P^\vee \ten_\ca \cb \to (\cb\ten_\ca P \ten_\ca \cb)^\vee
\]
takes $b_1 \ten f \ten b_2$ to the map
\[
x_1 \ten p \ten x_2 \mapsto \sum \pm b_1 f(p)_1 x_2 \ten x_1 f(p)_2 b_2 \ko
\]
where the sign is given by the Koszul sign rule and $f(p)=\sum f(p)_1 \ten f(p)_2$.
\item[b)] If the underlying graded module of $P$ is finitely generated
projective, the inverse
\[
(\cb\ten_\ca P \ten_\ca \cb)^\vee \to \cb\ten_\ca P^\vee \ten_\ca \cb
\]
of the morphism in a) takes a map $g$ to
\[
\sum \pm g(p_i)_1 \ten p_i^* \ten g(p_i)_2 \ko
\]
where the sign is given by the Koszul sign rule, we have
$g(p_i)=\sum g(p_i)_1\ten g(p_i)_2$ and $\sum p_i\ten p_i^*$
is the Casimir element for $P$.
\end{itemize}
\end{lemma}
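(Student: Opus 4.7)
The plan is to specialize Lemma~\ref{lemma:induction-preduality} to the bimodule setting via the dg functor $F^e = F\ten F\op : \ca^e \to \cb^e$, equipped with the flip preduality $V$ with $\phi = \id$ on both $\ca^e$ and $\cb^e$. Since the preduality on source and target is the same flip and $F^e$ intertwines the two strictly, the comparison morphism $F^e V \to V F^e$ is literally the identity. Thus Lemma~\ref{lemma:induction-preduality} provides, with no ambiguity, the canonical natural transformation
\[
P^\vee \ten_{\ca^e} \cb^e \to (P \ten_{\ca^e} \cb^e)^\vee
\]
from which both formulas must be read off after translating to the two-sided notation.

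First I would carefully unpack the identifications. A representable bimodule $\ca^e(?,(X,Y))$ induces along $F^e$ to $\cb^e(?,(FX,FY))$, which as a $\cb$-bimodule is $\cb(?,FY)\ten\cb(FX,?)$; under the isomorphism $P\ten_{\ca^e}\cb^e \iso \cb\ten_\ca P\ten_\ca \cb$, this becomes $\cb\ten_\ca\ca^e(?,(X,Y))\ten_\ca\cb$ with the sign $(-1)^{|y||p\ten x|}$ from the statement. The preduality $V$ on $\cb^e$ swaps the two tensor factors with the Koszul sign, so applying $\vee$ on the right-hand side swaps the two outer $\cb$-arguments.

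For part a), I would substitute into formula \ref{eq:induction-preduality} of Lemma~\ref{lemma:induction-preduality}, taking $m=p$, $b=b_1\ten b_2 \in \cb^e$, and noting that $V(b_1\ten b_2)=\pm b_2 \ten b_1$. Evaluating the induced map on $x_1\ten p\ten x_2$ and propagating the $\ca^e$-action through the outer tensors using the formulas of section~\ref{ss:duality-for-bimodules} produces exactly $\sum \pm b_1 f(p)_1 x_2 \ten x_1 f(p)_2 b_2$. Part b) is analogous: when the underlying graded bimodule of $P$ is finitely generated projective over $\ca^e$, there is a Casimir element $\sum p_i \ten p_i^* \in P\ten_{\ca^e} P^*$ mapping to $\id_P$, and formula \ref{eq:induction-preduality}.b) of Lemma~\ref{lemma:induction-preduality} pushed through the same identifications gives the advertised inverse $g\mapsto \sum \pm g(p_i)_1 \ten p_i^* \ten g(p_i)_2$.

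The only real obstacle is the bookkeeping of Koszul signs across three simultaneous identifications: the isomorphism $P\ten\ca^e \iso \ca\op\ten P\ten\ca$, the conversion $F_*P \iso \cb\ten_\ca P\ten_\ca \cb$, and the flip underlying $V$. No new conceptual input is needed beyond Lemma~\ref{lemma:induction-preduality}; the verification is entirely mechanical once the sign conventions of section~\ref{ss:duality-for-bimodules} are fixed.
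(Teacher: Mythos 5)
Your proposal is correct and follows precisely the paper's own route: the paper's proof is the single line ``This is a special case of lemma~\ref{lemma:induction-preduality},'' and you have correctly identified that the specialization is to $F^e: \ca^e\to\cb^e$ with the flip preduality and $\phi=\id$ on both sides, so that $F^eV\to VF^e$ is the identity and formulas \ref{eq:induction-preduality}.(a),(b) unwind to the displayed ones. The extra care you take with the sign-bearing identifications is exactly the mechanical content the paper leaves implicit.
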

\begin{proof} This is a special case of lemma~\ref{lemma:induction-preduality}.
\end{proof}

\subsection{Definition of the inverse dualizing complex}
\label{ss:definition-inverse-dualizing-complex}
As in section~\ref{ss:duality-for-bimodules}, we let $k$ be a
commutative ring and $\ca$ a dg $k$-category which is cofibrant
over $k$. We endow $\ca^e=\ca\ten\ca\op$ with the preduality
$(V,\phi)$ of section~\ref{ss:duality-for-bimodules}.
By $\ca$, we also denote the bimodule
\[
(X,Y) \mapsto \ca(X,Y).
\]
We define the {\em inverse dualizing complex $\Theta_\ca$} to
be any cofibrant replacement of the image of the bimodule $\ca$ under the total
derived functor of the preduality functor $M \mapsto M^\vee$ defined
in section~\ref{ss:duality-for-bimodules}.
Thus, if $\ca$ is given by a dg algebra $A$, then $\Theta_\ca$ is
a cofibrant replacement of
\[
\RHom_{A^e}(A,A^e)
\]
considered as an object of $\cd(A^e)$, \ie a right dg $A^e$-module, via the canonical
involution on $A^e$. Thus, the morphism set is computed using the
`inner' bimodule structure of $A^e$ and the right $A^e$-action on
$\Theta_A$ comes from the twisted right multiplication
\[
(a \ten b).(x\ten y) = V(x\ten y)(a\ten b) = (y\ten x) (a\ten b) = ya \ten bx
\]
which corresponds to the `outer' bimodule structure.
In this case, the homology $H^1\Theta_\ca$
is the space of outer double derivations of $A$, \ie the quotient
of the space of derivations of $A$ with values in $A^e$ by
the subspace of inner derivations. The inverse dualizing complex
owes its name to the following lemma. Let $\cd_{fd}(\ca)$ denote
the full subcategory of $\cd(\ca)$ formed by the dg modules $M$
such that each dg $k$-module $M(X)$, $X\in \ca$, is
perfect. If $k$ is a field and $\ca$ is
given by a dg algebra, this means that the sum $\sum_p \dim H^p(M)$ is finite.

\begin{lemma} \label{lemma:key-lemma} Suppose that $k$ is a field and $\ca$
is homologically smooth. For any dg module $L$ and any dg module $M$
in $\cd_{fd}(\ca)$, there is a canonical isomorphism
\[
\Hom_{\cd \ca}(L\ten_\ca \Theta_\ca, M) \iso D\Hom_{\cd \ca}(M,L) \ko
\]
where $D=\Hom_k(?,k)$. In particular, if $\Theta_\ca$ is isomorphic to
$\Sigma^{-n}\ca$ in $\cd(\ca^e)$, then $\cd_{fd}(\ca)$ is $n$-Calabi-Yau
as a triangulated category.
\end{lemma}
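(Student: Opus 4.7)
The plan is to reduce the isomorphism to the case of a representable module $L$, where it can be established directly from the definition of $\Theta_\ca$ and the homological smoothness of $\ca$. Both sides are contravariant triangle functors in $L$ that convert coproducts to products: on the right, this uses that $M \in \cd_{fd}(\ca) \subseteq \per(\ca)$ is compact (which holds by homological smoothness of $\ca$), so $\Hom_{\cd\ca}(M, -)$ preserves coproducts and $D$ then turns them into products; on the left, $L \mapsto L \otimes^L_\ca \Theta_\ca$ is a left adjoint, hence preserves coproducts, and then $\Hom_{\cd\ca}(-, M)$ turns coproducts into products. Since the representables $X^\wedge = \ca(?, X)$ compactly generate $\cd(\ca)$, it suffices to construct a canonical natural isomorphism for $L = X^\wedge$.

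For the construction, the key input is that homological smoothness makes $\ca$ perfect as a dg $\ca^e$-module, so the natural transformation of lemma~\ref{lemma:induction-preduality} is invertible when applied to the bimodule $\ca$. Combined with tensor-hom adjunctions and the definition $\Theta_\ca \simeq \RHom_{\ca^e}(\ca, \ca^e)$ (with the outer-structure twist of section~\ref{ss:duality-for-bimodules}), this yields a canonical quasi-isomorphism
\[
\Hom_{\cd\ca}(L \otimes^L_\ca \Theta_\ca, M) \cong \RHom_{\ca^e}(\ca, \RHom_k(L, M)),
\]
where $\RHom_k(L, M)$ is viewed as an $\ca$-bimodule via the right $\ca$-actions on $L$ and $M$. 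Evaluating for $L = X^\wedge$, a coend calculation reduces the right-hand side to $\RHom_k(M(X), k) = D M(X)$ (using that each $M(X)$ is a perfect $k$-complex, which is where the $k$-duality $D$ enters); on the other hand, a direct computation identifies this with $D\Hom_{\cd\ca}(M, X^\wedge)$.

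The main technical obstacle is bookkeeping: one must track all the bimodule structures and the conjugation by the involution $V$ on $\ca^e$ carefully enough that the resulting map is genuinely natural in both $L$ and $M$ and compatible with the preduality data. The \emph{in particular} statement follows at once: if $\Theta_\ca \cong \Sigma^{-n}\ca$ in $\cd(\ca^e)$, then $L \otimes^L_\ca \Theta_\ca \cong \Sigma^{-n} L$, and the isomorphism specializes to the nondegenerate bifunctorial pairing
\[
\Hom_{\cd\ca}(L, \Sigma^n M) \times \Hom_{\cd\ca}(M, L) \to k
\]
required for the Calabi-Yau property, with the symmetry condition $\langle \Sigma^p f, g\rangle = (-1)^{pq}\langle \Sigma^q g, f\rangle$ flowing from the symmetry of the preduality $\phi$ on $\ca$ (the canonical isomorphism $\ca \iso \Theta_\ca^\vee$ is $\phi$-symmetric, which propagates through naturality).
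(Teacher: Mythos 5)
Your overall strategy --- an adjunction to pass to bimodules, biduality for the perfect bimodule $\ca$ coming from homological smoothness, and the $k$-linear duality $D$ coming from $M\in\cd_{fd}(\ca)$ --- is the right one, and it is essentially the route of the proof the paper points to (lemma~4.1 of \cite{Keller08d}; the paper itself gives no details). Your d\'evissage to representable $L$ is also sound in principle: $\cd_{fd}(\ca)\subset\per(\ca)$ does hold for smooth $\ca$, so both sides are cohomological functors of $L$ sending coproducts to products. But it is not needed, because the adjunction argument below works for arbitrary $L$ in one stroke.

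The central chain of identifications, however, is wrong in two places, and these are not bookkeeping issues. First, the intermediate object you display, $\RHom_{\ca^e}(\ca,\RHom_k(L,M))$ with the bimodule structure you describe, is Hochschild \emph{cohomology} with coefficients in $\RHom_k(L,M)$ and is canonically isomorphic to $\RHom_\ca(L,M)$ itself; if your displayed quasi-isomorphism held, the lemma would assert $\RHom_\ca(L,M)\iso D\RHom_\ca(M,L)$ for every smooth $\ca$, \ie that every smooth dg category is $0$-Calabi-Yau as a triangulated category, which is false. The correct intermediate object is the Hochschild \emph{homology} object: by adjunction $\RHom_\ca(L\lten_\ca\Theta_\ca,M)\iso\RHom_{\ca^e}(\Theta_\ca,\RHom_k(L,M))$, and since $\ca$ is perfect over $\ca^e$ and $\Theta_\ca\iso\ca^\vee$, one has $\RHom_{\ca^e}(\ca^\vee,N)\iso\ca\lten_{\ca^e}N$. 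Second, your evaluation at $L=X^\wedge$ lands on the wrong object: the right-hand side of the lemma at $L=X^\wedge$ is $D\RHom_\ca(M,X^\wedge)$, whereas $DM(X)=D\RHom_\ca(X^\wedge,M)$; these are not isomorphic in general (for $\ca=k[x]$ and $M$ the simple module at the origin, $\RHom_\ca(M,\ca)$ is concentrated in degree $1$ while $M(X)$ sits in degree $0$). The asserted identification of $DM(X)$ with $D\Hom_{\cd\ca}(M,X^\wedge)$ is precisely the duality you are trying to prove, replaced by a Yoneda computation of the wrong variance. The repair is: $\RHom_k(L,M)\iso D\RHom_k(M,L)$ as bimodules because $M$ is perfect over $k$ (this is indeed where $\cd_{fd}$ enters, as you guessed), and then $\ca\lten_{\ca^e}D\RHom_k(M,L)\iso D\RHom_{\ca^e}(\ca,\RHom_k(M,L))\iso D\RHom_\ca(M,L)$, the first isomorphism again using that $\ca$ is perfect over $\ca^e$. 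With this chain the reduction to representables becomes superfluous, and your deduction of the ``in particular'' statement goes through.
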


\begin{proof} This is a small variation on lemma~4.1 in \cite{Keller08d}.
\end{proof}

\subsection{Quivers, tensor categories, cyclic derivatives}
\label{ss:tensor-categories-cyclic-derivatives}
 In this section,
we collect preliminary material for the computation in
section~\ref{ss:htpfp-dg-categories}.
Let $Q$ be a {\em graded
$k$-quiver}, \ie $Q$ consists of a set of objects $Q_0$ and, for
all objects $x$ and $y$, a $\Z$-graded $k$-module $Q(x,y)$.
Let $\cR$ be the discrete $k$-category on $Q_0$: It has the set of objects
$Q_0$, each endomorphism algebra is isomorphic to $k$ and all
morphisms between different objects vanish. By abuse of notation, we
also denote by $Q$ the $\cR$-bimodule $(x,y) \mapsto Q(x,y)$. Recall that the tensor product
$L\ten_\cR M$ of a right by a left $\cR$-module is given by
\[
(L\ten_\cR M)(x,y) = \coprod_{z} L(z,y)\ten M(x,z) \ko
\]
where $z$ ranges over the objects of $\cR$. The {\em path category} of $Q$ is
the {\em tensor category $T_\cR(Q)$}: It has the set of objects $Q_0$ and
the bimodule of morphisms
\[
\cR \oplus Q \oplus (Q\ten_\cR Q) \oplus \ldots \quad
\]
with the natural composition. We put $\ca=T_\cR(Q)$.

Now assume that $Q$ is finitely generated and free as an
$\cR^e$-module. Fix a basis $\alpha_i$, $1\leq i\leq n$, of
$Q$ and let $\sum \alpha_i \ten \alpha_i^*$
be the {\em Casimir element} of the $\cR^e$-bimodule $Q$, \ie the
preimage of the identity under the canonical isomorphism
\[
Q \ten_{\cR^e} \Hom_{\cR^e}(Q, \cR^e) \to \Hom_{\cR^e}(Q,Q).
\]
The {\em cyclic
derivative with respect to $\alpha_i$} \cite{RotaSaganStein80} is the unique map
\[
\del_{\alpha_i} : T_\cR(Q) \to T_\cR(Q)
\]
taking a composition $\beta_1 \ldots \beta_s$ of elements of $Q$
to the sum
\[
\sum_j \alpha_i^*(\beta_j) \beta_{j+1} \ldots \beta_s \beta_1 \ldots \beta_{j-1}.
\]

\subsection{Computation for a homotopically finitely presented dg category}
\label{ss:htpfp-dg-categories} Let $k$ be a commutative ring and $Q$ a
graded $k$-quiver whose set of objects is finite and whose bimodule of
morphisms is finitely generated and projective over $k$. Let $\cR$ be
the $k$-category with the same objects as $Q$ and whose only non zero
morphisms are the scalar multiples of the identities. Let $\ca$ be a
dg category of the form $(T_\cR(Q), d)$, where $T_\cR(Q)$ is the
tensor dg category (\cf
section~\ref{ss:tensor-categories-cyclic-derivatives})
\[
\cR \oplus (Q \ten_\cR Q) \oplus \ldots \oplus (Q\ten_\cR \cdots
\ten_\cR Q) \oplus \ldots
\]
and the differential $d$ is such
that $Q$ admits a finite filtration
\begin{equation} \label{eq:filtration}
F_0 \subset F_1 \subset F_2 \subset \ldots \subset F_N = Q
\end{equation}
such that all $F_p$ have the same objects as $Q$, the bimodule of arrows
of $F_0$ vanishes and $d(F_p)$ is contained in $T_\cR(F_{p-1})$ for all $p \geq 1$.
As shown in \cite{ToenVaquie07}, \cf also \cite{Keller06d}, in the
Morita homotopy category of dg categories, the dg category $(T_\cR(Q), d)$ is
homotopically finitely presented and every homotopically finitely
presented dg category is a retract of such a dg category.
Our aim in this section is to compute the inverse dualizing complex
$\Theta_\ca$ for $\ca=(T_\cR(Q), d)$.
For this, we first need to construct a cofibrant resolution of $\ca$ over $\ca^e$.
Let $\tilde{\beta}$ be the unique bimodule derivation
\[
\ca \to \ca \ten_\cR Q \ten_\cR \ca
\]
which takes an element $v: x\to y$ of $Q$ to $\id_y \ten v \ten \id_x$.
Notice that $\tilde{\beta}$ vanishes on $\cR\subset \ca$.
If we have $n\geq 1$ and $a=v_1 \ldots v_n$ for elements
$v_i: x_{i} \to x_{i-1}$ of $Q$, we have
\[
\tilde{\beta}(a) = 1_{x_0}\ten v_1\ten v_2 \ldots v_n +
\sum_{i=2}^{n-1} v_1 \ldots v_{i-1} \ten v_i \ten v_{i+1} \ldots v_n
+ v_1 \ldots v_{n-1}\ten v_n \ten 1_{x_n}.
\]
Let us denote by
\[
\rho: \ca\ten_\cR \ca \ten_\cR \ca \to \ca\ten_\cR Q \ten_\cR \ca
\]
the $\ca$-bilinear extension of $\tilde{\beta}$.
Notice that $\rho$ is a retraction of the inclusion of
$\ca\ten_\cR Q \ten_\cR \ca$ into $\ca\ten_\cR \ca \ten_\cR \ca$.
Let $\delta$ be the composition
\[
\xymatrix{\ca\ten_\cR  Q \ten_\cR \ca \ar[r]^-{d} &
\ca\ten_\cR \ca \ten_\cR\ca \ar[r]^-{\rho} &
\ca\ten_\cR \ten Q \ten_\cR\ca}.
\]
\begin{proposition}
\label{prop:bimodule-resolution-of-A}
\begin{itemize}
\item[a)] We have $\delta^2=0$ and $\ca \ten_\cR Q \ten_\cR \ca$ endowed
with $\delta$ is a cofibrant dg bimodule.
\item[b)] The diagram
\[
\xymatrix{0 \ar[r] &
\ca \ten_\cR Q \ten_\cR \ca \ar[r]^-{\tilde{\alpha}}  &
\ca\ten_\cR \ca \ar[r] & \ca \ar[r] & 0 \ko
}
\]
where $\ca \ten_\cR Q \ten_\cR \ca$ is endowed with $\delta$
and
\[
\tilde{\alpha}(u\ten v\ten w)= uv\ten w - u\ten vw\ko
\]
is a complex of dg modules. The cone $\bp \ca$ over the morphism
\begin{equation} \label{eq:small-resolution-htfp}
\xymatrix{
\ca \ten_\cR Q \ten_\cR \ca \ar[r]^-{\tilde{\alpha}}  &
\ca\ten_\cR \ca  }
\end{equation}
is a cofibrant resolution of $\ca$ and is strictly perfect
(\cf section~\ref{ss:dg-categories}).
In particular, the dg category $\ca$ is homologically smooth.
\end{itemize}
\end{proposition}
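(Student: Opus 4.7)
My plan is to organize the argument around the bimodule structure on $\ca\ten_\cR Q\ten_\cR\ca$, the telescoping identity $\tilde\alpha\circ\rho(1\ten a\ten 1)=a\ten 1-1\ten a$, and an induction on the filtration $F_0\subset\cdots\subset F_N=Q$. First I would clarify the differential: I view $\ca\ten_\cR Q\ten_\cR\ca$ as the graded free $\ca^e$-bimodule on the middle factor $Q$, and equip it with the differential obtained by extending, via the $\ca^e$-Leibniz rule, the map $Q\to \ca\ten_\cR Q\ten_\cR\ca$, $q\mapsto\rho(1\ten d_\ca(q)\ten 1)$. The ``$\delta$'' of the statement is exactly this extension, so that $\delta(u\ten q\ten w)=d(u)\ten q\ten w+(-1)^{|u|}u\,\rho(1\ten d(q)\ten 1)\,w+(-1)^{|u|+|q|}u\ten q\ten d(w)$.

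For (a), the equality $\delta^2=0$ reduces to checking $(\rho\circ d_\ca)^2=0$ on $1\ten Q\ten 1$, the outer Leibniz terms squaring to zero automatically and the cross terms cancelling by $\ca$-bilinearity of $\rho$. The key point is the telescoping identity: if $a=v_1\cdots v_n$ is a word in $Q$, then $\tilde\alpha\rho(1\ten a\ten 1)=a\ten 1-1\ten a$, so applying $d_\ca$ inside $\rho\circ d_\ca\circ\rho\circ d_\ca$ can be reassembled using $d_\ca^2=0$ on $\ca$. Cofibrancy of $(\ca\ten_\cR Q\ten_\cR\ca,\delta)$ follows from the filtration hypothesis: setting $M_p=\ca\ten_\cR F_p\ten_\cR\ca$, the condition $d(F_p)\subset T_\cR(F_{p-1})$ ensures that $M_p$ is a dg sub-bimodule with successive quotient $M_p/M_{p-1}\cong \ca\ten_\cR(F_p/F_{p-1})\ten_\cR\ca$ carrying zero ``middle'' differential, hence a shift of a finite sum of free bimodules $\ca(?,y)\ten\ca(x,?)$. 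Thus $\ca\ten_\cR Q\ten_\cR\ca$ is a finite iterated extension of free bimodules and is cofibrant (in fact semi-free of finite type).

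For (b), that $\tilde\alpha$ commutes with the differential reduces, by $\ca^e$-linearity, to the elementary identity $\tilde\alpha(\rho(1\ten d(q)\ten 1))=d(q)\ten 1-1\ten d(q)=d(\tilde\alpha(1\ten q\ten 1))$ for $q\in Q$, obtained from the telescoping computation above. Cofibrancy of $\bp\ca$ is then automatic: it is the cone on a morphism between two cofibrant bimodules. To see that $\bp\ca\to\ca$ is a quasi-isomorphism, I would use the filtration of $\ca\ten_\cR Q\ten_\cR\ca$ by the $M_p$ together with the compatible filtration of $\ca\ten_\cR\ca$ by $\ca\ten_\cR T_\cR(F_p)\ten_\cR\ca$; on the associated graded one reduces to the case where $d_\ca=0$ and $\ca=T_\cR(Q)$ is the genuine path category, in which case $0\to\ca\ten_\cR Q\ten_\cR\ca\xrightarrow{\tilde\alpha}\ca\ten_\cR\ca\to\ca\to 0$ is the classical short exact bimodule resolution of a tensor algebra, whose exactness is verified by the contracting homotopy $w\mapsto\sum w_1\cdots w_{i-1}\ten w_i\ten w_{i+1}\cdots w_n$ on $\ca\ten\ca$. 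Strict perfection of $\bp\ca$ then follows from the finiteness of the filtration and the finite generation of each $F_p/F_{p-1}$ as an $\cR^e$-module; homological smoothness is then immediate from the definition.

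The main obstacle will be the bookkeeping in step (a): verifying that the Leibniz-extended $\delta$ squares to zero requires keeping track of the interaction between $d_\ca$ on outer factors and $\rho\circ d_\ca$ on the inner $Q$, and the cleanest route is to encode the whole computation as the equality $\tilde\alpha\circ\delta=d_\ca\circ\tilde\alpha$ (so that $\delta$ becomes forced by demanding $\tilde\alpha$ be a chain map into a complex whose differential already squares to zero), from which $\delta^2=0$ drops out because $\tilde\alpha$ is injective on the image of $\delta^2-0$ modulo the kernel controlled by $\rho$.
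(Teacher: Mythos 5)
Your proposal is correct in substance and follows the paper's overall strategy for everything except the key computation $\delta^2=0$, where you end up with a genuinely different (and arguably cleaner) mechanism. The paper proves $\delta^2=0$ by showing that $\rho\circ(d\circ\rho-\rho\circ d)$ restricts to a bimodule derivation on the middle copy of $\ca$ which vanishes on the generators $Q$, hence vanishes identically; this gives $\rho d\rho d=\rho^2d^2=0$. Your ``cleanest route'' instead establishes $\tilde{\alpha}\circ\delta=d\circ\tilde{\alpha}$ (which follows from $\tilde{\alpha}\circ\rho=\tilde{\alpha}$, itself a consequence of the telescoping identity and bilinearity, together with the fact that composition in $\ca$ is a chain map) and then deduces $\tilde{\alpha}\circ\delta^2=d^2\circ\tilde{\alpha}=0$; since $\tilde{\alpha}$ is injective on underlying graded bimodules (it is the inclusion of $\Omega_{\ca/\cR}$ into $\ca\ten_\cR\ca$ in the classical split-exact sequence for a tensor category), this forces $\delta^2=0$. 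That argument is valid and simultaneously delivers the chain-map property needed in (b). Two caveats. First, your opening sketch of (a) --- ``the outer Leibniz terms square to zero and the cross terms cancel by bilinearity'' --- is not a proof: the reduction to generators is legitimate (the square of a connection-like map is bilinear), but the vanishing of $\delta^2(1\ten q\ten 1)=\rho\, d\,\rho(1\ten dq\ten 1)$ on generators is exactly the nontrivial point, and it requires either the paper's derivation argument or your injectivity argument; the final sentence of your proposal, which garbles this into ``injective on the image of $\delta^2-0$ modulo the kernel controlled by $\rho$'', should be replaced by the precise statement above. Second, for the quasi-isomorphism $\bp\ca\to\ca$ your filtration-plus-associated-graded reduction is more elaborate than necessary: once $\tilde{\alpha}$ is a chain map, the sequence in (b) is exact as a sequence of graded modules (the classical tensor-algebra resolution), and an exact sequence of dg modules that is graded-split exact immediately yields that the cone on $\tilde{\alpha}$ augments quasi-isomorphically to $\ca$. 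The cofibrancy and strict perfection arguments via the finite filtration $F_0\subset\cdots\subset F_N=Q$ coincide with the paper's.
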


\begin{remark} \label{rk:cofibrant-resolutions-of-A}
If instead of the finite filtration~\ref{eq:filtration}, we
have a countable exhaustive filtration $F_0 \subset F_1 \subset \ldots Q$
satisfying the same conditions, then the cone $\bp \ca$ of part b)
is still a cofibrant resolution of $\ca$ (but $\ca$ is no longer
homologically smooth in general).
\end{remark}

\begin{proof} a) Let us consider the commutator
$d \circ \rho - \rho \circ d$ as a graded map
from $\ca \ten_\cR \ca \ten_\cR \ca$ to itself.
Its restriction to
\[
\ca \iso \cR \ten_\cR \ca \ten_\cR \cR
\]
is a bimodule derivation. Since $\rho$ is bilinear,
the composition $\rho(d\circ \rho - \rho\circ d)$
still restricts to a bimodule derivation on $\ca$.
For $v\in Q$, we have
\[
\rho(d\circ \rho - \rho\circ d)(v) = \rho d(v) - \rho^2 d(v) =0.
\]
Thus, the composition $\rho(d\circ \rho - \rho\circ d)$ vanishes
on $Q$, thus on $\ca$ and thus on $\ca\ten_\cR \ca \ten_\cR \ca$.
It follows that we have
\[
\delta^2=\rho d \rho d = \rho^2 d^2 = 0.
\]
To check that $(\ca\ten_\cR \ca \ten_\cR \ca, \delta)$
is cofibrant it suffices to observe that $\delta$ takes
$\ca\ten_\cR F_p \ten_\cR \ca$ to $\ca\ten_\cR F_{p-1} \ten_\cR \ca$
for each $p\geq 1$ and that the subquotient is a finitely generated
free dg bimodule. Since the filtration by the $F_p$ is finite,
it also follows that $(\ca\ten_\cR \ca \ten_\cR \ca, \delta)$ is
perfect. Since $\cR$ is perfect over $\cR^e$ and
\[
\ca\ten_\cR \ca = \cR\ten_{\cR^e} \ca^e \ko
\]
it follows that the cone over
\[
\xymatrix{0 \ar[r] &
\ca \ten_\cR Q \ten_\cR \ca \ar[r]^-{\tilde{\alpha}}  &
\ca\ten_\cR \ca \ar[r] & 0 }
\]
is indeed cofibrant and perfect in $\cd(\ca^e)$.
\end{proof}

Let $\Theta_\ca=(\bp \ca)^\vee$ be the image under
the preduality functor $M \mapsto M^\vee$ defined
in section~\ref{ss:duality-for-bimodules} of the cofibrant
resolution $\bp \ca$ given by the cone over the morphism
\[
\xymatrix{
\ca \ten_\cR Q \ten_\cR \ca \ar[r]^-{\tilde{\alpha}}  &
\ca\ten_\cR \ca }
\]
of \ref{eq:small-resolution-htfp}. Since the cone
is stricly perfect, so is $\Theta_\ca$. In particular,
it is cofibrant and is therefore (homotopy equivalent to) the
inverse dualizing complex. Let us make $\Theta_\ca$ more
explicit. By definition, $\Sigma \Theta_\ca$ is
isomorphic to the cone of the induced morphism
\[
\xymatrix{
\Hom_{\ca^e}(\ca\ten_\cR \ca, \ca^e) \ar[r] &
\Hom_{\ca^e}(\ca \ten_\cR Q \ten_\cR \ca,\ca^e)
}
\]
endowed with the bimodule structure coming from the
`outer' structure on $\ca^e$.
Using lemma~\ref{lemma:bimodule-induction-and-preduality},
we obtain that $\Sigma\Theta$ is isomorphic to the cone over
the morphism of dg modules
\[
\xymatrix{
\ca\ten_\cR \cR^\vee \ten_\cR\ca \ar[r] & \ca\ten_\cR Q^\vee \ten_\cR \ca.
}
\]
which takes an element
$\id_x\ten\id_x^*\ten \id_x$ of $\ca\ten_\cR \cR^\vee \ten_\cR\ca$
to
\[
\id_x (\sum (-1)^{|\alpha_i|} \alpha_i^* \ten \alpha_i \ten \id_{x_i} -
\id_{x_i} \ten \alpha_i^* \ten \alpha_i) \id_x \ko
\]
where $\sum \id_x \ten \id_x^*$ is the Casimir element
of the $\cR^e$-module $\cR$ and $\sum \alpha_i \ten \alpha_i^*$ is the Casimir element
of the $\cR^e$-module $Q$ and $\alpha_i : x_i \to y_i$.
The differential of $\ca\ten_\cR \cR^\vee \ten_\cR$ is that
of the tensor product (where $\cR^\vee$ carries the zero differential).
To describe the differential of $\ca\ten_\cR Q^\vee \ten_\cR \ca$,
we consider $\ca\ten_\cR Q^\vee \ten_\cR\ca$ as a dg submodule
of the tensor algebra over $\cR$ of $Q\oplus Q^\vee$. Then
the differential of an element $\id_{x_i}\ten \alpha_i^* \ten \id_{y_i}$
equals the cyclic derivative (\cf section~\ref{ss:tensor-categories-cyclic-derivatives})
with respect to $\alpha_i$ of
\[
W= \sum_j (-1)^{|\alpha_j|} {\alpha_j^*} \, d(\alpha_j).
\]
This determines the differential because $\ca\ten_\cR Q^\vee \ten_\cR\ca$
is a dg $\ca$-bimodule whose underlying graded module is generated
by the elements $\id_{x_i}\ten \alpha_i^* \ten \id_{y_i}$.

\subsection{Compatibility with Morita functors and localizations}
\label{ss:Morita-equivariance-localization}
Keep the hypotheses of section~\ref{ss:definition-inverse-dualizing-complex}.
Let $\cb$ be another dg category and $F: \ca\to\cb$ a dg
functor. The dg functor $F$ is a {\em localization functor} if the (total left derived
functor of) induction along $F$ induces an equivalence
\[
(\cd\ca)/\cn \iso \cd\cb
\]
for some localizing subcategory $\cn$ of $\cd\ca$ (namely the kernel
of the induction functor). Equivalently, restriction along $F$
is an equivalence from $\cd\cb$ onto a full subcategory of $\cd\ca$
(whose inclusion admits a left adjoint given by the induction functor).
The localizations $F:\ca\to\cb$
such that the kernel $\cn$ of the induced functor $F_*: \cd(\ca)\to \cd(\cb)$
is compactly generated are precisely the dg quotients in the
sense of Drinfeld \cite{Drinfeld04} \cite{Keller99}.

\begin{proposition} \label{prop:functoriality-of-Theta} Assume that
$F:\ca\to\cb$ is a localization functor.
\begin{itemize}
\item[a)] The functor $F^e:\ca^e\to\cb^e$ induced by $F$ is still a
localization functor. It sends the bimodule $\ca$ to
the bimodule $\cb$.
\item[b)] The restriction $(F^e)^*$ along $F^e$ is monoidal
for the derived functors of the tensor products $\ten_\ca$ and $\ten_\cb$ (but does not
preserve the unit in general).
\item[c)]
If $\ca$ is homologically smooth, then
so is $\cb$ and the left derived functor of induction along
$F^e:\ca^e\to\cb^e$ sends $\Theta_\ca$ to $\Theta_\cb$. In particular,
for each dg $\cb$-module $L$, we have the projection formula
\begin{equation} \label{eq:projection-formula}
F_*((F^* L) \lten_\ca \Theta_\ca) \iso L\lten_\cb \Theta_\cb.
\end{equation}
\item[d)] If the dg category $\ca$ is homologically smooth and
$n$-Calabi-Yau as a bimodule for some integer $n$
(\cf section~\ref{ss:The-Calabi-Yau-property}), then $\cb$
has the same properties.
\item[e)] If $F$ is even a Morita functor, so is $F^e:\ca^e\to\cb^e$
and the induced equivalence $\cd(\ca^e) \to \cd(\cb^e)$
is naturally a monoidal functor for the derived functors of the tensor products $\ten_\ca$
and $\ten_\cb$. It  commutes
with the total derived functors of the preduality functors and
sends $\Theta_\ca$ to $\Theta_\cb$.
\end{itemize}
\end{proposition}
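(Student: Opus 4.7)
The plan is to derive all five parts from two basic observations. First, the enveloping induction $(F^e)_!\colon \cd(\ca^e) \to \cd(\cb^e)$, which sends an $\ca$-bimodule $M$ to $\cb \lten_\ca M \lten_\ca \cb$, takes the diagonal bimodule to $\cb \lten_\ca \ca \lten_\ca \cb = \cb \lten_\ca \cb$. Second, $F$ is a localization functor exactly when the counit $F_* F^* \to \id$ is an isomorphism, or equivalently $\cb \lten_\ca \cb \iso \cb$.

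For part (a), these two observations give $(F^e)_!(\ca) \simeq \cb$ directly. To see that $F^e$ is itself a localization, I would check that restriction $(F^e)^*$ is fully faithful by evaluating the counit: for $L \in \cd(\cb^e)$, one rewrites $(F^e)_!(F^e)^*L = \cb \lten_\ca L \lten_\ca \cb \simeq \cb \lten_\ca \cb \lten_\cb L \lten_\cb \cb \lten_\ca \cb \simeq L$ by two applications of $\cb \lten_\ca \cb \simeq \cb$. For part (b), given $L, M \in \cd(\cb^e)$, the natural map $(F^e)^*L \lten_\ca (F^e)^*M \to (F^e)^*(L \lten_\cb M)$ obtained by factoring through the middle $\cb$-balance is an isomorphism by the same insertion trick, $L \lten_\ca M \simeq L \lten_\cb \cb \lten_\ca \cb \lten_\cb M \simeq L \lten_\cb M$; the unit is not preserved, since $(F^e)^*\cb \neq \ca$ in general.

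For part (c), the functor $(F^e)_!$ preserves cones, shifts, direct summands, and sends each free bimodule $\ca^e(?,(X,Y))$ to the free bimodule $\cb^e(?,(FX,FY))$, so $\ca \in \per(\ca^e)$ forces $\cb = (F^e)_!(\ca) \in \per(\cb^e)$. To identify $(F^e)_!(\Theta_\ca)$ with $\Theta_\cb$, I apply the bimodule version of the natural transformation~(\ref{eq:nat-trsf-induction-preduality}), constructed via Lemma~\ref{lemma:bimodule-induction-and-preduality}, to the source $M=\ca$; by Lemma~\ref{lemma:commutation-with-duality} this is a quasi-isomorphism because $\ca$ is perfect over $\ca^e$. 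The projection formula then follows by combining this with (b): one expands both sides as iterated tensor products and uses $F^*L \simeq L \lten_\cb \cb$ together with $\Theta_\cb \simeq \cb \lten_\ca \Theta_\ca \lten_\ca \cb$ to rewrite $F_*(F^*L \lten_\ca \Theta_\ca) = F^*L \lten_\ca \Theta_\ca \lten_\ca \cb = L \lten_\cb \Theta_\cb$.

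For part (d), applying $(F^e)_!$ to a symmetric isomorphism $\Theta_\ca \iso \Sigma^{-n}\ca$ and invoking (c) yields an isomorphism $\Theta_\cb \iso \Sigma^{-n}\cb$; symmetry is preserved by the compatibility of the natural transformation~(\ref{eq:nat-trsf-induction-preduality}) with the preduality involutions, which is the content of the lemma following~(\ref{eq:matrix-to-form}). Part (e) is the Morita specialization: if $F$ is Morita then $(F^e)_!$ is an equivalence of triangulated categories (the kernel of the localization collapses), the monoidal comparison of (b) becomes an honest monoidal equivalence, and Lemma~\ref{lemma:commutation-with-duality} applies to arbitrary $M$ (not only perfect ones), so commutation with preduality holds for all bimodules and $\Theta_\ca \mapsto \Theta_\cb$ follows by the same argument as in (c). The main obstacle is part (c), where one must carefully assemble the bimodule analogues of the natural transformations from section~\ref{ss:induction} and verify that they respect the $V$-twisted right $\ca^e$-action defining $\Theta_\ca$.
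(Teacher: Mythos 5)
Your proposal is correct and follows essentially the same route as the paper: you describe bimodule induction and restriction as two-sided (derived) tensoring with $\cb$ over $\ca$, reduce everything to the localization identity $\cb\lten_\ca\cb\iso\cb$ (the paper's $X'\lten_\ca X\iso\cb$), and invoke Lemma~\ref{lemma:commutation-with-duality} with $\ca$ perfect over $\ca^e$ to identify the image of $\Theta_\ca$ with $\Theta_\cb$, from which the projection formula and parts (d), (e) follow exactly as in the text. The only differences are in the level of detail (you spell out why $\cb$ is perfect over $\cb^e$ and why the symmetry in (d) is preserved, points the paper leaves implicit).
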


\begin{remark} If $A$ is an (ordinary) algebra and $A\to B$
a localization of $A$ in the sense that the induced functor
\[
\proj(A) \to \proj(B)
\]
between the categories of finitely generated projective modules
is a localization of categories, it may well happen that $A$ is
homologically smooth but $B$ is not. For example, if $A$ is the
path algebra of the quiver
\[
\xymatrix{1 \ar@<0.5ex>[r]^{\eps_2} \ar@<-0.5ex>[r]_{x_2} & 2 \ar@<0.5ex>[r]^{\eps_1} \ar@<-0.5ex>[r]_{x_1} & 3}
\]
over a field $k$, then $A$ is finite-dimensional and of global
dimension $2$ but its localization $B$ obtained by inverting $x_1$ and
$x_2$ is the $2\times 2$-matrix algebra over the algebra $k[\eps]/(\eps^2)$
of dual numbers. More generally, as shown in \cite{NeemanRanickiSchofield04},
every finitely presented $k$-algebra can be obtained in a similar way from
a finite-dimensional algebra of global dimension at most $2$. This is
not in contradiction with part a) of the lemma, because there,
we consider {\em derived} localizations. In fact, in our example, the
algebra $B$ is the zeroth homology of the dg quotient $\tilde{B}$
obtained from $A$ by inverting $x_1$ and $x_2$ and this generalizes to the setup of
\cite{NeemanRanickiSchofield04}.
\end{remark}

\begin{proof} Let us first describe the induction functor
$\cd(\ca^e) \to \cd(\cb^e)$ induced by $F$. For this, let us denote
by $X$ the $\ca$-$\cb$-bimodule $(A,B) \mapsto \cb(B,FA)$ and
by $X'$ the $\cb$-dual bimodule $(B,A) \mapsto \cb(FA,B)$, which
is isomorphic to $\RHom_\cb(X,\cb)$. Then the induction along
$F$ is isomorphic to the derived tensor product with $X$ and
the restriction along $F$ is isomorphic to the derived tensor
product with $X'$.
From the fact that $F$ is a localization functor, it follows
that the canonical morphism
\[
X'\lten_\ca X \to \cb
\]
is an isomorphism in $\cd(\cb^e)$. Moreover, since $X$ is
perfect over $\cb$, the canonical morphism
\[
X \lten_\cb X' \to \RHom_\cb(X,X)
\]
is an isomorphism in $\cd(\ca^e)$. The action of $\ca$ on $X$ yields
a bimodule morphism $\ca \to \RHom_\cb(X,X)$ and thus a morphism
\[
\ca \to X \lten_\cb X'
\]
in $\cd(\ca^e)$.
Now we can describe the induction functor $\cd(\ca^e) \to \cd(\cb^e)$: It is isomorphic
to
\[
M \mapsto X' \lten_\ca M \lten_\ca X.
\]
In particular, the bimodule
$M=\ca$ is sent to $X'\lten_\ca X \iso \cb$. The
restriction functor $\cd(\cb^e) \to \cd(\ca^e)$ is isomorphic
to
\[
N \mapsto X\lten_\cb N \lten_\cb X'.
\]
Since $X'\lten_\ca X$ is isomorphic to $\ca$, this shows part b):
the restriction functor is monoidal.
If we compose it with the induction functor, we find the
identity functor because $X'\lten_\ca X \iso \cb$. It follows
that the induction functor $\cd(\ca^e)\to \cd(\cb^e)$ is
a localization functor and sends $\ca$ to $\cb$, which is part a). If $\ca$
is homologically smooth, then $\ca$ is perfect in $\cd(\ca^e)$
and so its dual $\Theta_\ca$ is sent to the dual $\Theta_\cb$
of its image $\cb$, by lemma~\ref{lemma:commutation-with-duality}.
Thus, we have
\[
X^* \ten_\ca \Theta_\ca \ten_\ca X \iso \Theta_\cb.
\]
By applying $L\lten_\ca ?$ to this
isomorphism, we get the projection formula~\ref{eq:projection-formula}.
This ends the proof of c). Part d) is immediate from c) and a).

Let us prove e):
If $F$ is a Morita functor, the canonical morphism $\ca\to X\lten_\cb X'$
is also invertible and then the description of the induction functor
via $X$ and $X'$ shows that it is monoidal. The commutation of
the induction functor with the preduality functor follows from
lemma~\ref{lemma:commutation-with-duality}. Now the last assertion
follows from a).
\end{proof}

\section{Calabi-Yau completions}
\label{s:Calabi-Yau-completions}

\subsection{Definition and Morita equivariance}
\label{ss:definition-CY-completion-Morita-equivariance} Let $k$
be a commutative ring and $\ca$ a dg $k$-category whose morphism complexes
are cofibrant over $k$. Let $n$
be an integer and $\Theta=\Theta_\ca$ the inverse dualizing
complex of section~\ref{ss:definition-inverse-dualizing-complex}.
Put $\theta=\Sigma^{n-1}\Theta_\ca$. The {\em
$n$-Calabi-Yau completion of $\ca$} is the tensor dg category
\[
\Pi_n(\ca)=T_\ca(\theta) = \ca \oplus \theta \oplus (\theta \ten_\ca
\theta) \oplus \ldots .
\]
We also call it the {\em derived $n$-preprojective dg category of $\ca$}
(whence the notation $\Pi_n$). Notice that we have canonical inclusion
and projection functors
\[
\ca \to \Pi_n(\ca) \to \ca.
\]
Up to a quasi-isomorphism (canonical up to homotopy), it is independent of the
choice of cofibrant replacement made in the definition of $\Theta_\ca$.

\begin{proposition} \label{prop:Morita-equivariance-CY-completion}
Let $F:\ca\to \cb$ be a Morita functor. Then $F$ yields
a canonical Morita functor $\Pi_n(F): \Pi_n(\ca) \to \Pi_n(\cb)$ such
that we have a commutative diagram
\[
\xymatrix{\ca\ar[d]_F \ar[r] & \Pi_n(\ca)\ar[d]^{\Pi_n(F)} \ar[r] & \ca \ar[d]^F \\
\cb\ar[r] & \Pi_n(\cb) \ar[r] & \cb.
}
\]
\end{proposition}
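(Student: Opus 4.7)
The plan is to extend the functoriality of the inverse dualizing complex established in Proposition 3.3.e from bimodules to tensor dg categories. Since $F$ is a Morita functor, so is $F^e : \ca^e \to \cb^e$, and the induced monoidal equivalence $\cd(\ca^e) \iso \cd(\cb^e)$ commutes with the derived preduality, sending $\Theta_\ca$ to $\Theta_\cb$, and therefore $\theta_\ca = \Sigma^{n-1}\Theta_\ca$ to $\theta_\cb = \Sigma^{n-1}\Theta_\cb$.

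The first step is to upgrade this derived identification to a strict bimodule morphism. Choose cofibrant models $\theta_\ca^c$ and $\theta_\cb^c$ representing $\theta_\ca$ and $\theta_\cb$. Because restriction along $F^e$ is fully faithful onto its essential image, cofibrancy yields a morphism of dg $\ca$-bimodules $u : \theta_\ca^c \to F^*\theta_\cb^c$, canonical up to bimodule homotopy, whose induction $F_* u$ is a quasi-isomorphism. Given $(F, u)$, the universal property of the tensor dg category supplies a dg functor
\[
\Pi_n(F) : T_\ca(\theta_\ca^c) \to T_\cb(\theta_\cb^c)
\]
sending $a_0 \otimes v_1 \otimes \cdots \otimes v_k \otimes a_k$ (with $a_i \in \ca$ and $v_j \in \theta_\ca^c$) to $F(a_0) \otimes u(v_1) \otimes \cdots \otimes u(v_k) \otimes F(a_k)$. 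Identifying $T_\ca(\theta_\ca^c)$ with $\Pi_n(\ca)$ and $T_\cb(\theta_\cb^c)$ with $\Pi_n(\cb)$ in the Morita homotopy category (tensor algebras are preserved by bimodule quasi-isomorphisms between cofibrant objects over cofibrant base categories) gives the required morphism. Commutativity of the diagram is transparent from the construction: the inclusions $\ca \to \Pi_n(\ca)$ and $\cb \to \Pi_n(\cb)$ are the tensor-degree-zero summands, and the projections kill the positive-degree part.

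For the Morita property, one checks that induction along $\Pi_n(F)$ is computed by the derived tensor product with a $\Pi_n(\ca)$-$\Pi_n(\cb)$-bimodule built from the invertible bimodule $X = \cb(?, F?)$ implementing $F$, together with the compatibility $X \otimes_\cb \theta_\cb \otimes_\cb X' \simeq \theta_\ca$ of Step 1; invertibility propagates to all tensor powers of $\theta$, so restriction along $\Pi_n(F)$ is essentially surjective on compact generators and fully faithful. The main obstacle is the lifting in Step 1: passing from the abstract derived isomorphism $\theta_\ca \simeq F^*\theta_\cb$ to a strict bimodule morphism that feeds the universal property of tensor categories, and doing so canonically up to homotopy. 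This requires care with cofibrancy of bimodules and with the compatibility of $u$ with the differentials inherited from $\Theta_\ca$ and $\Theta_\cb$, but is standard once the model-categorical set-up of Section 3.1 is in place.
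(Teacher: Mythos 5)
Your construction is essentially the paper's: both proofs invoke part e) of proposition~\ref{prop:functoriality-of-Theta} to produce a quasi-isomorphism $\Theta_\ca \to F^{e*}\Theta_\cb$, lift it to an honest bimodule morphism using cofibrancy, and extend via the universal property of the tensor category to a dg functor $\Pi_n(F)$ that is quasi fully faithful because the morphism complexes decompose into tensor powers of $\theta$. The only place your sketch is thinner than the paper is the generation half of the Morita property (the objects $\Pi_n(\cb)(?,FX)$ range only over $X$ in the image of $F$, so one must still check they generate $\cd(\Pi_n(\cb))$); the paper settles this with the right-orthogonal computation $\Hom(\Pi_n(\cb)(?,FX),M)=M(FX)$ together with the fact that $F$ being Morita makes these functors jointly conservative, and you should make that step explicit rather than folding it into ``invertibility propagates to tensor powers'', which only addresses full faithfulness.
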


\begin{proof} Let $F^e$ be the induced functor from $\ca^e$ to $\cb^e$
and denote by $F^{e*}$ the restriction along $F^e$.
By part e) of proposition~\ref{prop:functoriality-of-Theta}, we can find
a quasi-isomorphism $\phi: \Theta_\ca \to F^{e*}\Theta_\cb$ and
by part a), it induces quasi-isomorphisms between the (derived)
tensor powers
\[
\theta_\ca^{\ten_\ca n} \to F^*(\theta_\cb^{\ten_\cb n})
\]
for all $n\geq 1$. Thus, the pair $(F,\phi)$ yields a dg functor
\[
\Pi_n(F): T_\ca(\theta_\ca) \to T_\cb(\theta_\cb) \ko
\]
which is quasi fully faithful.  It remains to be shown that the image
generates the derived category of $\Pi_n(\cb)$.  Now clearly the image
contains all representable functors $\Pi_n(\cb)(?,FX)$ associated with
objects $FX$ in the image of $F$. But for an arbitrary object $M$ of
the derived category of $\Pi_n(\cb)$, we have
\[
\Hom (\Pi_n(\cb)(?,FX),M) = \Hom_{\cb}(\cb(?,FX), M|\cb)= M(FX).
\]
Now since $F$ is a Morita functor, the object $M$ vanishes iff
$M(FX)$ is acyclic for all $X$ in $\ca$. Thus, the right orthogonal
of the image of $\Pi_n(F)$ vanishes and so the image is all of the
derived category.
\end{proof}

\subsection{Morphisms between restrictions}
\label{ss:cy-completion-morphisms-between-restrictions}
We keep the
notations and assumptions of
section~\ref{ss:definition-CY-completion-Morita-equivariance}.
Let $i: \cd(\ca) \to \cd(\Pi_n(\ca))$ be the restriction along
the projection onto the first component $\Pi_n(\ca)\to \ca$.
\begin{lemma} \label{lemma:cy-completion-morphisms-between-restrictions}
Let $L$ and $M$ be in $\cd\ca$.
\begin{itemize}
\item[a)] We have a canonical isomorphism
\[
\RHom_{\Pi_n(\ca)}(iL, iM) = \RHom_\ca(L,M)\oplus \Sigma^{-n} \RHom_\ca(L\ten_\ca \Theta_\ca, M) \ko
\]
where $\Theta_\ca$ is the inverse dualizing complex (section~\ref{ss:definition-inverse-dualizing-complex}).
\item[b)] If $k$ is a field, $\ca$ is homologically smooth
and $M$ belongs to $\cd_{fd}(\ca)$ (\cf section~\ref{ss:definition-inverse-dualizing-complex}),
we have a canonical isomorphism
\[
\RHom_{\Pi_n(\ca)}(iL, iM) = \RHom_\ca(L,M)\oplus \Sigma^{-n} D\RHom_\ca(M,L) \ko
\]
where $D$ is the duality functor $\Hom_k(?,k)$.
\end{itemize}
\end{lemma}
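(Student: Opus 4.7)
Set $B = \Pi_n(\ca) = T_\ca(\theta)$ with $\theta = \Sigma^{n-1}\Theta_\ca$. The plan is to exploit the tensor algebra structure to resolve $iL$ as a right $B$-module by only two `induced' terms, apply $\RHom_B(-,iM)$, and observe that the connecting map in the resulting long exact sequence is forced to vanish. Concretely, one has the standard short exact sequence of $B$-bimodules
\[
0 \to B\ten_\ca \theta\ten_\ca B \to B\ten_\ca B \to B \to 0,
\]
peculiar to tensor dg categories (the higher bar terms simply do not appear). Tensoring on the left by $iL\ten_B -$ and using that $iL\ten_B B = iL$ yields the short exact sequence of right $B$-modules
\[
0 \to L\ten_\ca \theta\ten_\ca B \to L\ten_\ca B \to iL \to 0,
\]
in which the surjection is induced by the projection $B\to\ca$ and the injection factors through the inclusion $L\ten_\ca B_{\geq 1}\hookrightarrow L\ten_\ca B$ via multiplication $\theta\ten_\ca B\iso B_{\geq 1}$. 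If $L$ is chosen cofibrant over $\ca$, then both $L\ten_\ca B$ and $L\ten_\ca\theta\ten_\ca B$ are cofibrant over $B$, so this sequence is a genuine two-step resolution.

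Applying $\RHom_B(-,iM)$ to the associated distinguished triangle and using the standard adjunction $\RHom_B(N\ten_\ca B,iM) \iso \RHom_\ca(N,M)$ (valid since $iM|_\ca = M$) produces a long exact sequence whose central fragment reads
\[
\RHom_B(iL,iM) \to \RHom_\ca(L,M) \arr{\delta} \RHom_\ca(L\ten_\ca \theta, M).
\]
The decisive point is that the connecting map $\delta$ is zero. Under the above adjunction, a morphism $\phi_0: L \to M$ corresponds to the $B$-linear extension $\phi: L\ten_\ca B \to iM$, $l\ten b \mapsto \phi_0(l)\cdot b$. Precomposing with the injection $L\ten_\ca\theta\ten_\ca B \hookrightarrow L\ten_\ca B$ produces a map landing in $L\ten_\ca B_{\geq 1}$, on which $\phi$ vanishes because the restriction $iM$ carries trivial action of $B_{\geq 1}$. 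Hence $\delta = 0$, the long exact sequence splits into short exact sequences, and we obtain
\[
\RHom_B(iL,iM) \iso \RHom_\ca(L,M) \oplus \Sigma^{-1}\RHom_\ca(L\ten_\ca \theta, M).
\]

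Substituting $\theta = \Sigma^{n-1}\Theta_\ca$ rewrites the second summand as $\Sigma^{-n}\RHom_\ca(L\ten_\ca \Theta_\ca, M)$, which proves part a). Part b) is then immediate: under the additional hypotheses on $\ca$ and $M$, Lemma~\ref{lemma:key-lemma} identifies $\RHom_\ca(L\ten_\ca \Theta_\ca, M)$ with $D\RHom_\ca(M,L)$.

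The only non-bookkeeping step is the vanishing of $\delta$, which reduces to the transparent fact that restriction along the augmentation $B\to\ca$ kills everything of positive tensor degree; once the two-term resolution is in place, the rest is formal.
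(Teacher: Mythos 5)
Your proof is correct and takes essentially the same route as the paper: you use the same two-term resolution $0 \to L\ten_\ca \theta\ten_\ca \Pi_n(\ca) \to L\ten_\ca \Pi_n(\ca) \to iL \to 0$ and the same observation that the connecting map vanishes because the positive tensor degrees act by zero on both $iL$ and $iM$, with part b) then following from lemma~\ref{lemma:key-lemma} exactly as in the paper.
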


\begin{proof} We may and will assume that $L$ is cofibrant over $\ca$. Then we have
an exact sequence of dg modules over $\Pi_n(\ca)=T_\ca(\theta)$
\begin{equation} \label{eq:resolution-A-module}
\xymatrix{0 \ar[r] & (iL)\ten_\ca \theta \ten_\ca T_\ca(\theta) \ar[r]^-\alpha &
(iL)\ten_\ca T_\ca(\theta) \ar[r]^-\beta &
iL \ar[r] & 0}\ko
\end{equation}
where $\alpha$ takes $l\ten x \ten u$ to $lx\ten u - l\ten xu$ and $\beta$ is the
multiplication of $iL$. Clearly the cone over $\alpha$ is a cobrant resolution
$\bp (iL)$ of $iL$ over $T_\ca(\theta)$. Since $\theta$ acts by zero in $iL$ and $iM$,
the morphism $\alpha$ induces zero in $\Hom_{T_\ca(\theta)}(?,iM)$. So we find
a canonical isomorphism in the derived category of $k$-modules
\[
\Hom_{T_\ca(\theta)}(\bp(iL), iM) = \Hom_\ca(L,M) \oplus \Sigma^{-1}\Hom_\ca(L\ten_\ca\theta, M).
\]
This implies part a). Part b) follows from part a) and Lemma~\ref{lemma:key-lemma}.
\end{proof}

\subsection{Compatibility with localizations}
\label{ss:compatibility-with-localizations}
We keep the
notations and assumptions of
section~\ref{ss:definition-CY-completion-Morita-equivariance}.
We say that a sequence of dg categories
\[
\xymatrix{
0 \ar[r] & \cn \ar[r]^G & \ca \ar[r]^F & \cb \ar[r] & 0
}
\]
is {\em exact} if the induced sequence
\[
\xymatrix{0\ar[r] & \cd(\cn) \ar[r]^{G_*} & \cd(\ca) \ar[r]^{F_*} & \cd(\cb) \ar[r] & 0}
\]
is exact, \ie the composition vanishes, $\cd(\cn)$ identifies with a full triangulated
subcategory of $\cd(\ca)$ and the triangle quotient of $\cd(\ca)$ by $\cd(\cn)$
identifies via $F_*$ with $\cd(\cb)$. In this case, the dg functor $F:\ca\to\cb$
is a localization in the sense of section~\ref{ss:Morita-equivariance-localization} (but
not each localization is obtained in this way as shown in
\cite{Keller94a}).

\begin{theorem}  \label{thm:localization-CY-completion}
Assume that $\ca$ is homologically smooth.
\begin{itemize}
\item[a)]
Let $F:\ca\to \cb$ be a localization functor. Then $F$ yields
a canonical localization functor $\Pi_n(F): \Pi_n(\ca) \to \Pi_n(\cb)$
such that we have a commutative diagram
\[
\xymatrix{\ca\ar[d]_F \ar[r] & \Pi_n(\ca)\ar[d]^{\Pi_n(F)} \ar[r] & \ca \ar[d]^F \\
\cb\ar[r] & \Pi_n(\cb) \ar[r] & \cb.
}
\]
\item[b)]
If we have an exact sequence of dg categories
\[
\xymatrix{
0 \ar[r] & \cn \ar[r]^G & \ca \ar[r]^F & \cb \ar[r] & 0}\ko
\]
then the kernel of the functor $\Pi_n(F)_*: \cd(\Pi_n(\ca)) \to \cd(\Pi_n(\cb))$
is the localizing subcategory generated by the objects $\Pi_n(\ca)(?,N)$, $N\in\cn$.
\end{itemize}
\end{theorem}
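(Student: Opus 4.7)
My plan is to construct the dg functor $\Pi_n(F)$ via the functoriality of the inverse dualizing complex established in proposition~\ref{prop:functoriality-of-Theta}, then to verify part~(a) by checking that the restriction along $\Pi_n(F)$ is fully faithful, and finally to identify the kernel in part~(b) using that both $\ck$ and $\ker(\Pi_n(F))_*$ are compactly generated. By proposition~\ref{prop:functoriality-of-Theta}(c), the induction $F^e_*$ sends $\Theta_\ca$ to $\Theta_\cb$, and the unit of the adjunction yields a canonical morphism $\phi: \theta_\ca \to F^{e*}\theta_\cb$ in $\cd(\ca^e)$. Lifting $\phi$ to a strict map between cofibrant dg bimodules, the universal property of the tensor dg category produces the desired dg functor $\Pi_n(F): T_\ca(\theta_\ca) \to T_\cb(\theta_\cb)$ extending $F$; the displayed square in (a) commutes by construction.

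For part~(a), it suffices to show that the restriction $(\Pi_n(F))^*$ is fully faithful. I would check this on the compact generators $\Pi_n(\cb)(?,B)$, $B\in\cb$, by applying lemma~\ref{lemma:cy-completion-morphisms-between-restrictions}(a) on both sides. The Hom spaces decompose as $\RHom_\cb(\cb(?,B),\cb(?,B'))\oplus\Sigma^{-n}\RHom_\cb(\cb(?,B)\lten_\cb\theta_\cb,\cb(?,B'))$ on the $\cb$-side, and analogously with $F^*\cb(?,B)$ in place of $\cb(?,B)$ on the $\ca$-side. The first summands match because $F^*$ is fully faithful (since $F$ is a localization), and the second summands match by the projection formula~(\ref{eq:projection-formula}) of proposition~\ref{prop:functoriality-of-Theta}(c).

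For part~(b), the inclusion $\ck\subseteq\ker(\Pi_n(F))_*$ is direct: for $N\in\cn$, the underlying $\cb$-module of the representable $\Pi_n(\cb)(?,F(GN))$ is $\bigoplus_{k\geq 0}\theta_\cb^{\lten k}\lten_\cb\cb(?,F(GN))$, which is acyclic because $\cb(?,F(GN))=F_*G_*\cn(?,N)=0$ in $\cd(\cb)$; hence $(\Pi_n(F))_*\Pi_n(\ca)(?,GN)=0$. For the reverse inclusion, the restriction $(\Pi_n(F))^*$ preserves coproducts, so $(\Pi_n(F))_*$ preserves compactness, and $\ker(\Pi_n(F))_*$ is compactly generated. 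It then suffices to show that every compact $M$ with $(\Pi_n(F))_*M=0$ lies in $\ck$. By base change along the commutative square relating $F$ and $\Pi_n(F)$, such an $M$ satisfies $M|_\ca\in\cd(\cn)$, and a bar-type resolution (analogous to~\ref{eq:resolution-A-module}) exhibits $M$ as a totalization whose layers have the form $(M|_\ca)\lten_\ca\theta_\ca^{\lten k}$; each layer is the image under the induction from $\cd(\ca)$ to $\cd(\Pi_n(\ca))$ of an object of $\cd(\cn)$, hence lies in $\ck$.

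The principal technicality is verifying that the operation $?\lten_\ca\theta_\ca:\cd(\ca)\to\cd(\ca)$ preserves $\cd(\cn)$, which is needed to control successive layers of the bar resolution in $\cd(\Pi_n(\ca))$. This follows from the semi-orthogonal decomposition $\cd(\ca)=\langle\cd(\cn),F^*\cd(\cb)\rangle$ attached to the localization $F$ together with the projection formula: for $L\in\cd(\cn)$, the triangle coming from the decomposition, once tensored with $\theta_\ca$ and pushed forward along $F$, trivializes the contribution from $F^*\cd(\cb)$ via~(\ref{eq:projection-formula}), forcing $F_*(L\lten_\ca\theta_\ca)=0$. The bar-type resolution of a general $\Pi_n(\ca)$-module and the bookkeeping of its layers through the localizing subcategory $\ck$ form the main technical content of the proof.
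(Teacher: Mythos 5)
Your construction of $\Pi_n(F)$ and the forward inclusion $\ck\subseteq\ker\Pi_n(F)_*$ in part~(b) match the paper, but both of your key verifications contain genuine gaps.

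In part~(a), the appeal to lemma~\ref{lemma:cy-completion-morphisms-between-restrictions}(a) is misplaced. That lemma computes $\RHom_{\Pi_n(\cb)}(iL,iM)$ for $L,M\in\cd(\cb)$, where $i$ is \emph{restriction} along the projection $\Pi_n(\cb)\to\cb$; the two-term direct sum decomposition arises precisely because $\theta_\cb$ acts by zero on $iL$ and $iM$. The compact generators $\Pi_n(\cb)(?,B)$ are \emph{inductions} of $\cb(?,B)$ along the inclusion $\cb\to\Pi_n(\cb)$, on which $\theta_\cb$ certainly does not act by zero; indeed $\RHom_{\Pi_n(\cb)}(\Pi_n(\cb)(?,B),\Pi_n(\cb)(?,B'))=\Pi_n(\cb)(B,B')=\bigoplus_{k\geq 0}\theta_\cb^{\ten_\cb k}(B,B')$, which is not of the shape you wrote. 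The paper's argument instead takes an \emph{arbitrary} cofibrant $\Pi_n(\cb)$-module $L$, uses the exact sequence $0\to L\ten_\cb\theta_\cb\ten_\cb T_\cb(\theta_\cb)\to L\ten_\cb T_\cb(\theta_\cb)\to L\to 0$ to present $L$ as a \emph{cone} (not a direct sum) of induced modules, and then reduces full faithfulness to the two statements $\Hom_{\cd\cb}(L,M)\cong\Hom_{\cd\ca}(F^*L,F^*M)$ and $\Hom_{\cd\cb}(L\lten_\cb\theta_\cb,M)\cong\Hom_{\cd\ca}(F^*L\lten_\ca\theta_\ca,F^*M)$, which follow from full faithfulness of $F^*$ and the projection formula~(\ref{eq:projection-formula}).

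In part~(b), the reverse inclusion hinges on the claim that $?\lten_\ca\theta_\ca$ preserves $\cd(\cn)$, and your justification of this claim does not work. For $L\in\cd(\cn)$ the semi-orthogonal decomposition triangle is $L\to L\to 0$, so ``tensoring the triangle with $\theta_\ca$ and pushing forward'' yields no information; the projection formula applies only to objects of the form $F^*L'$, which is exactly the complementary piece that is zero here, so nothing is being ``trivialized''. (Note that the claim itself, while presumably true, is essentially equivalent to part~(b), so one should not expect a cheap direct proof.) The paper sidesteps the whole issue: since $\Pi_n(F)$ is a localization, the kernel of $\Pi_n(F)_*$ is recovered from its right orthogonal, and one shows directly that the image of $\Pi_n(F)^*$ coincides with $\ck^\perp$. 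One inclusion is the adjunction computation $\RHom_{\Pi_n(\ca)}(\Pi_n(\ca)(?,N),M)=\RHom_\ca(\ca(?,N),M|_\ca)$; for the other, if $M$ is right orthogonal to all $\Pi_n(\ca)(?,N)$ then $M|_\ca\cong F^*L$ for some $\cb$-module $L$, and the structural map $M\ten_\ca\theta_\ca\to M$ transports, via $F_*F^*\cong\id$ and the projection formula, to a map $L\ten_\cb\theta_\cb\to L$ equipping $L$ with a $\Pi_n(\cb)$-module structure with $\Pi_n(F)^*L\cong M$. You should replace your bar-resolution layering argument with this more direct characterization of the image.
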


\begin{proof} We may and will assume that $F:\ca\to\cb$ is the
identity on the set of objects.
Let $(F^e)^*:\cc(\cb^e) \to \cc(\ca^e)$ be the restriction
functor. Let us put $\Theta_\cb'=(F^e)^*(\Theta_\cb)$. Notice
that for any objects $A,A'$ of $\ca$ (equivalently: $\cb$),
we have $\Theta'_\cb(A,A')=\Theta_\cb(A,A')$ and that in $\Theta'_\cb$,
the morphisms of $\ca$ act via $F:\ca\to\cb$.
According to part c) of
proposition~\ref{prop:functoriality-of-Theta}, we have
a canonical morphism of dg modules $\phi: \theta_\ca \to \theta_\cb'$
whose image under the induction along $F^e$ is invertible in $\cd(\cb^e)$.
The morphism $\phi$ yields morphisms of dg modules between the tensor powers
\[
\theta_\ca \ten_\ca \cdots \ten_\ca \theta_\ca \to
\theta'_\cb \ten_\ca \cdots \ten_\ca \theta_\cb' \to
\theta_\cb \ten_\cb \cdots \ten_\cb \theta_\cb.
\]
Thus, the pair $(F,\phi)$ yields a dg functor
$G: \Pi_n(\ca) \to \Pi_n(\cb)$. Clearly, $G$ is compatible
with the canonical inclusion and projection functors.
It remains to be shown that the restriction along $G$
is a fully faithful functor
\[
\cd(\Pi_n(\cb)) \to \cd(\Pi_n(\ca)).
\]
Let $L$ be a dg $\Pi_n(\cb)$-module. It is given by
its underlying dg $\cb$-module and a morphism of
dg $\cb$-modules
\[
\lambda : L \ten_\cb \theta_\cb \to L.
\]
The dg module $G^* L$ is given by the restriction of $L$
to $\ca$ and the morphism of dg $\ca$-modules deduced
from $\lambda$ and $\phi$
\[
\xymatrix{
L \ten_\ca \theta_\ca \ar[r]^{\id\ten \phi} & L\ten_\ca \theta_\cb
\ar[r]^{can} & L\ten_\cb \theta_\cb \ar[r]^-\lambda &  L.}
\]
Let us use this description of $G^*$ to show that it is
fully faithful. Let $L$ be a dg $\Pi_n(\cb)$-module. We may
and will assume that $L$ is cofibrant. Since $\Pi_n(\cb)$ is
cofibrant as a right dg $\cb$-module, the restriction of $L$
to $\cb$ is then cofibrant. We have an exact sequence of
cofibrant dg $\Pi_n(\cb)$-modules
\[
\xymatrix{
0 \ar[r] & L\ten_\cb \theta_\cb \ten_\cb T_\cb(\theta_\cb) \ar[r]^-{\alpha} &
L\ten_\cb T_\cb(\theta_\cb) \ar[r] & L \ar[r] & 0\ko
}
\]
where $\alpha(l\ten x\ten u) = lx\ten u - l\ten xu$.
This makes it clear that the cone over the morphism
\[
\xymatrix{
 L\ten_\cb \theta_\cb \ten_\cb T_\cb(\theta_\cb) \ar[r] &
L\ten_\cb T_\cb(\theta_\cb)
}
\]
is homotopy equivalent to $L$. Let $M$ be another dg $\Pi_n(\cb)$-module.
By applying $\Hom_\cb(?,M)$ to the above morphism, we obtain a morphism of
dg $k$-modules
\[
\Hom_\cb(L,M) \to \Hom_\cb(L\ten_\cb \theta_\cb, M)
\]
whose cone (shifted by one degree to the right)
computes morphisms from $L$ to $M$ in the derived category
of $\Pi_n(\cb)$. An analogous reasoning yields the morphisms
between $G^*L$ and $G^*M$ in the derived category of $\Pi_n(\ca)$.
Thus, to conclude that $G^*$ is fully faithful, it suffices
to check that for all $M$, $F^*$ induces bijections
\[
\Hom_{\cd(\cb)}(L, M) \to \Hom_{\cd(\ca)}(F^*L,  F^*M)
\]
and
\[
\Hom_{\cd(\cb)}(L\lten_\cb \theta_\cb, M) \to \Hom_{\cd(\ca)}(F^*(L)\lten_\ca \theta_\ca, F^*M).
\]
The first bijection follows from the full faithfulness of $F^*$.
The second one is a consequence of the full faithfulness of $F^*$
and of the projection formula~\ref{eq:projection-formula}. This ends
the proof of a). To prove b), it suffices to show that the image
of $\Pi_n(F)^*$ is exactly the full subcategory of the dg modules
over $\Pi_n(\ca)$ which are right orthogonal to all the representable
dg modules $\Pi_n(\ca)(?,N)$ for $N$ in $\cn$. We have
\[
\RHom_{\Pi_n(\ca)}(\Pi_n(\ca)(?,N), M)=\RHom_\ca(\ca(?,N), M) \ko
\]
which shows that if $M$ is in the image of $\Pi_n(F)^*$, it is
right orthogonal to the $\Pi_n(\ca)(?,N)$. Conversely, if $M$ satisfies
this condition, then the underlying $\ca$-module of $M$ is quasi-isomorphic
to $F^* L$ for some dg $\cb$-module $L$. The structural morphism
\[
M\ten_\ca \theta_\ca \to M
\]
then yields a morphism $F^*L \ten_\ca \theta_\ca \to F^*L$ hence
a morphism
\[
F_*(F^*L \ten_\ca \theta_\ca) \to L
\]
and thus by the projection formula~\ref{eq:projection-formula}, a
morphism
\[
L\ten_\cb \theta_\cb \to L
\]
Thus, $L$ carries a canonical structure of dg module over $\Pi_n(\cb)$ and
it is clear that $M$ is isomorphic to the image under $\Pi_n(F)^*$ of $L$
endowed with this structure.
\end{proof}

\subsection{The Calabi-Yau property} \label{ss:The-Calabi-Yau-property}
We keep the notations and assumptions
of section~\ref{ss:definition-CY-completion-Morita-equivariance}.
In particular, the symbol $n$ denotes a fixed integer.
On the category of dg $\ca^e$-modules, we consider the composition
$V_n$ of the preduality functor $V$ with the shift $\Sigma^n$.
It is part of a canonical preduality functor $(V_n, \phi_n)$
(by section~\ref{ss:preduality-on-dg-categories}).
We also use the notation $V_n$ for the derived functor of $V_n$.
Slightly modifying the terminology of Ginzburg and Kontsevich
(\cf Definition~3.2.3 of \cite{Ginzburg06}),
we say that the dg category $\ca$ is {\em $n$-Calabi-Yau
as a bimodule} if, in $\cd(\ca^e)$, there is an isomorphism
\[
f: \ca \to V_n \ca
\]
which is $(V_n, \phi_n)$-symmetric, \ie such that $V_n(f) \phi_n = f$.

\begin{theorem} \label{thm:CY-completion-is-CY}
If $\ca$ is homologically smooth,
its $n$-Calabi-Yau completion $\Pi_n(\ca)$ is homologically smooth and $n$-Calabi-Yau as
a bimodule.
\end{theorem}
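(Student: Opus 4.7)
The plan is to combine a standard tensor-algebra bimodule resolution of $\Pi_n(\ca)$ with the symmetric cone construction of Proposition~2.5. Write $B=\Pi_n(\ca)$ and $\theta=\Sigma^{n-1}\Theta_\ca$.

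For homological smoothness, I would exhibit the tautological short exact sequence of dg $B$-bimodules
\[
0 \to B \otimes_\ca \theta \otimes_\ca B \xrightarrow{\tilde\alpha} B \otimes_\ca B \xrightarrow{\mu} B \to 0,
\]
where $\tilde\alpha(b\otimes x \otimes b') = bx\otimes b' - b \otimes xb'$ and $\mu$ is multiplication. Since $\ca$ is homologically smooth, both $\ca$ and $\theta=\Sigma^{n-1}V\ca$ are perfect as $\ca^e$-modules. Tensoring a perfect $\ca^e$-module on both sides with $B$ yields a perfect $B^e$-module, so the two leftmost terms of the sequence are perfect over $B^e$, and hence so is $B$.

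For the Calabi-Yau property, the key step is to identify $B \otimes_\ca B$ with $V_{n-1}(B \otimes_\ca \theta \otimes_\ca B)$. By Lemma~3.1 together with Lemma~2.6 (applicable since $\theta$ is perfect over $\ca^e$), we have a canonical quasi-isomorphism
\[
V_{n-1}(B \otimes_\ca \theta \otimes_\ca B) \simeq B \otimes_\ca V^{\ca^e}_{n-1}(\theta) \otimes_\ca B,
\]
and a direct computation using the reflexivity of $\ca$ gives $V^{\ca^e}_{n-1}(\theta) = \Sigma^{n-1}V(\Sigma^{n-1}V\ca) \simeq \ca$, from which the identification $V_{n-1}(B \otimes_\ca \theta \otimes_\ca B) \simeq B \otimes_\ca B$ follows. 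Under this identification, $\tilde\alpha$ becomes a closed morphism $f: X \to V_{n-1}X$ with $X = B \otimes_\ca \theta \otimes_\ca B$; its two terms $bx\otimes b'$ and $-b\otimes xb'$ are interchanged by the swap involution of $B^e$, so (modulo Koszul signs) $f$ is $(V_{n-1},\phi_{n-1})$-antisymmetric. Applying Proposition~2.5 with $(V,\phi)$ replaced by $(V_{n-1},\phi_{n-1})$, the cone on $f$ -- which is $B$ by the defining triangle -- carries a canonical $(V_n,\phi_n)$-symmetric morphism $g: B \to V_n B$. Since $X$ is perfect and hence reflexive, Proposition~2.5 further guarantees that $g$ is invertible, producing the $n$-Calabi-Yau structure on $B$.

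The main obstacle is the sign bookkeeping required to verify $V_{n-1}$-antisymmetry of $\tilde\alpha$: one must track Koszul signs arising from the involution on $\ca^e$, from the extension of $(V,\phi)$ to module categories (section~2.2), and from the shifts internal to $\theta$ and $V_{n-1}$. If after the computation $f$ turned out to be symmetric rather than antisymmetric, applying Proposition~2.5 to $-f$ or adjusting $\phi_{n-1}$ by a sign would remedy the situation; either way the Calabi-Yau conclusion is a formal consequence of homological smoothness of $\ca$ together with the results of Section~3.
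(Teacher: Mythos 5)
Your proposal follows essentially the same route as the paper: the tautological tensor-algebra bimodule resolution of $B=\Pi_n(\ca)$, inducing perfectness from $\ca^e$ to $B^e$ for smoothness, then identifying $B\ten_\ca B$ and $B\ten_\ca \theta \ten_\ca B$ as mutual $V_{n-1}$-duals (via lemma~\ref{lemma:commutation-with-duality} and reflexivity of the perfect bimodule $\ca$) and applying proposition~\ref{prop:symmetric-cone} to the antisymmetric map $\tilde\alpha$. This is precisely the paper's argument.

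However, the verification that $\tilde\alpha$ is $(V_{n-1},\phi_{n-1})$-antisymmetric is not optional bookkeeping to be deferred; it is the whole content of the Calabi--Yau assertion. The paper establishes it concretely: it constructs the natural quasi-isomorphism
\[
V_{n-1}(\theta)\ten_{\ca^e}\cb^e\ten_{\ca^e}\theta^*\;\longrightarrow\; \Hom_{\ca^e}(\theta, \cb\ten_\ca\cb),
\]
lifts $\lambda\colon x\mapsto 1\ten x$ along it using the Casimir element of $\theta$, checks that the transpose conjugate of that lift maps to $\rho\colon x\mapsto x\ten 1$, and concludes that $\alpha=\rho-\lambda$ is antisymmetric. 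Your suggested fallback, in case the computation produced a \emph{symmetric} $\tilde\alpha$, does not work: if $\tilde\alpha$ were $\phi_{n-1}$-symmetric then so is $-\tilde\alpha$, so replacing $f$ by $-f$ changes nothing; and replacing $\phi_{n-1}$ by $-\phi_{n-1}$ propagates to $\psi_n=-\psi_n$ in the conclusion of proposition~\ref{prop:symmetric-cone}, yielding a $(V_n,\phi_n)$-\emph{anti}symmetric isomorphism $B\to V_n B$, which does not satisfy the definition of the $n$-Calabi--Yau property (which demands a $\phi_n$-\emph{symmetric} isomorphism). So the sign is a genuine obstruction, not a matter of convention, and your proof is incomplete until it is checked.
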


\begin{proof} Let $\cb$ be the $n$-Calabi-Yau completion. We have
a short exact sequence of $\cb^e$-modules
\[
\xymatrix{ 0 \ar[r] & T_\ca(\theta)\ten_\ca \theta \ten_\ca T_\ca(\theta) \ar[r]^-\alpha &
T_\ca(\theta) \ten_\ca T_\ca(\theta) \ar[r] &
T_\ca(\theta) \ar[r] &
0\ko
}
\]
where the morphism $\alpha$ takes an element $f$ of $\theta(X,Y)$ to $1_Y\ten f - f\ten 1_X$
and the second map is composition. Thus, in the derived category of $\cb^e$,
the bimodule $T_\ca(\theta)$ is isomorphic to the cone on the morphism $\alpha$.
We deduce first that $T_\ca(\theta)$ is perfect as a bimodule: Indeed, the objects
\[
T_\ca(\theta)\ten_\ca \theta \ten_\ca T_\ca(\theta) = \theta \ten_{\ca^e} \cb^e
\quad\mbox{ and }\quad
T_\ca(\theta) \ten_\ca T_\ca(\theta) = \ca \ten_{\ca^e} \cb^e
\]
are perfect since they are induced from perfect $\ca^e$-modules (all
tensor products are also derived tensor products since $\cb^e$ is cofibrant
over $\ca^e$).

To prove the second part of the assertion, we first notice that $\theta$
is the $V_{n-1}$-dual of $\ca$. Since the bimodule $\ca$ is perfect, it is homotopically
$V_{n-1}$-reflexive and so, up to homotopy, $\ca$ is also the $V_{n-1}$-dual of
$\theta$. By lemma~\ref{lemma:commutation-with-duality}, for perfect
modules, the induction functor $?\ten_{\ca^e} \cb^e$
commutes with the preduality $V_n$ up to isomorphism in the derived
category. Thus, in $\cd(\cb^e)$, the objects
\[
\theta \ten_{\ca^e} \cb \mbox{ and } \ca \ten_{\ca^e} \cb
\]
are still $V_{n-1}$-dual to each other. So by proposition~\ref{prop:symmetric-cone},
in order to show that $\cb$ is $n$-Calabi-Yau as a bimodule, it suffices to show
that $\alpha$ is $V_{n-1}$-antisymmetric. Now as seen in section~\ref{ss:induction},
we have a natural homotopy equivalence
\[
V_{n-1}(\theta) \ten_{\ca^e} \cb^e \ten_{\ca^e} \theta^* \to \Hom_{\cb^e}(\theta\ten_{\ca^e} \cb^e, V_{n-1}(\theta\ten_{\ca^e} \cb^e)).
\]
The right hand side is quasi-isomorphic to the following dg $k$-modules:
\[
\Hom_{\ca^e}(\theta, V_{n-1}(\theta)\ten_{\ca^e}\cb^e) \iso
\Hom_{\ca^e}(\theta, \ca\ten_{\ca^e}\cb^e) =
\Hom_{\ca^e}(\theta, \cb\ten_\ca \cb) \ko
\]
where we use the fact that $\theta$ is cofibrant. So we get a natural
quasi-isomorphism
\[
V_{n-1}(\theta) \ten_{\ca^e} \cb^e \ten_{\ca^e} \theta^* \to \Hom_{\ca^e}(\theta, \cb\ten_\ca \cb).
\]
Let us lift the morphism $\lambda: x \mapsto 1 \ten x$ along this quasi-isomorphism:
Let $c$ be the Casimir
element in $\theta\ten_{\ca^e} \theta^*$, \ie the image of $1\in k$ under
the morphism
\[
k \to \Hom_{\ca^e}(\theta, \theta) \iso \theta\ten_{\ca^e} \theta^*.
\]
We let $\tilde{\lambda}$ be the image of $\id\ten c$ under the composition
\[
(V_{n-1}\theta)\ten_{\ca^e} (\ca\op\ten_k\theta) \ten_{\ca^e} \theta^* \to
(V_{n-1}\theta)\ten_{\ca^e} (\cb\op\ten_k \cb) \ten_{\ca^e} \theta^* .
\]
Then clearly $\tilde{\lambda}$ maps to $\lambda$ and the transpose conjugate
of $\tilde{\lambda}$ maps to $\rho: x \mapsto x \ten 1$. Since $\alpha$
equals $\rho-\lambda$, it follows that $\alpha$ is indeed $V_{n-1}$-antisymmetric.
\end{proof}

\section{Deformed Calabi-Yau completions}
\label{s:deformed-CY-completions}

\subsection{Construction and Calabi-Yau property} 
\label{ss:construction-CY-propery}
Let $k$ be a commutative ring and $\ca$ a dg
$k$-category such that $\ca(X,Y)$ is cofibrant as a dg $k$-module for
all objects $X$ and $Y$ of $\ca$.  We assume that $\ca$ is
homologically smooth. Let $\Theta$ be the inverse dualizing complex (\cf
section~\ref{ss:definition-inverse-dualizing-complex}), $n$ an integer, $\theta=\Sigma^{n-1}
\Theta$ and $\Pi_n(\ca)=T_\ca(\theta)$ the $n$-Calabi-Yau completion.
It is natural to deform $\Pi_n(\ca)$ by adding an $\ca$-bilinear
(super-)derivation $D$ of degree $1$ to its differential. Such a
derivation is determined by its restriction to the generating bimodule
$\theta$. It has to satisfy
\[
0=(d+D)^2 = d(D) + D^2.
\]
Since the right hand side is a degree $2$ derivation, it suffices to
check this identity on the generating bimodule $\theta$. Assume that
$D$ takes $\theta$ to $\ca\subset T_\ca(\theta)$. Then $D^2$ vanishes
and the condition reduces to $d(D)=0$. Thus, we see that each
closed bimodule morphism $c$ of degree $1$ from $\theta$ to $\ca$
gives rise to a `deformation'
\[
\Pi_n(\ca, c)
\]
of $\Pi_n(\ca)$, obtained by adding $c$ to the differential of $\Pi_n(\ca)$.
A standard argument shows that two homotopic morphisms $c$ and $c'$
yield quasi-isomorphic dg categories
$\Pi_n(\ca,c)$ and $\Pi_n(\ca, c')$. Thus, up to quasi-ismorphism, the
deformation $\Pi_n(\ca,c)$ only depends on the image of $c$ in the
derived category of bimodules (recall that $\theta$ is cofibrant).
Now notice that since the bimodule $\ca$ is perfect, we have the
following isomorphisms:
\begin{align*}
\Hom_{\cd(\ca^e)}(\Sigma^{n-1}\Theta, \Sigma \ca)  & =
\Hom_{\cd(\ca^e)}(\ca^\vee, \Sigma^{2-n} \ca) =
H^{2-n}(\ca \lten_{\ca^e} \ca^{\vee\vee}) \\
&= H^{2-n}(\ca \lten_{\ca^e} \ca) = \Tor^{\ca^e}_{n-2}(\ca, \ca)
= HH_{n-2}(\ca) \ko
\end{align*}
where $HH$ denotes Hochschild homology.

\begin{theorem} \label{thm:defo-CY-completion-is-CY}
The deformed $n$-Calabi-Yau completion $\Pi_n(\ca,c)$ associated
with an element $c$ of $HH_{n-2}(\ca)$ is homologically smooth
and $n$-Calabi-Yau.
\end{theorem}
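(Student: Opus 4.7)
Write $\cb = \Pi_n(\ca, c)$. As a graded $\ca$-algebra, $\cb$ coincides with $\Pi_n(\ca) = T_\ca(\theta)$; only the differential has been altered, by the addition of the bimodule derivation extending $c : \theta \to \ca$. The plan is to mimic the proof of Theorem~\ref{thm:CY-completion-is-CY}, noting that the required bimodule structures descend to the underlying graded level.

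Consider the sequence of graded $\cb^e$-modules
\[
0 \to \cb \ten_\ca \theta \ten_\ca \cb \xrightarrow{\alpha} \cb \ten_\ca \cb \to \cb \to 0,
\]
with $\alpha(b_1 \ten f \ten b_2) = b_1 f \ten b_2 - b_1 \ten f b_2$, equipped with the natural tensor product differentials ($\cb$ carrying its deformed differential, $\theta$ its intrinsic differential $d_\theta$). The first step is to verify that $\alpha$ is closed. Computing $d_{\cb\ten\cb} \alpha - \alpha d$ on a generator $b_1 \ten f \ten b_2$, the $d_\theta$-contributions cancel, and one is left with the discrepancy $\pm\bigl(b_1 c(f) \ten b_2 - b_1 \ten c(f) b_2\bigr)$. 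Because $c(f) \in \ca$ and tensor products are taken over $\ca$, this vanishes in $\cb \ten_\ca \cb$; hence $\alpha$ is a chain map. The cone of $\alpha$ is then a cofibrant bimodule resolution of $\cb$, and its underlying graded $\cb^e$-modules coincide with those appearing in the non-deformed resolution used in the proof of Theorem~\ref{thm:CY-completion-is-CY}. Since these are induced from perfect $\ca^e$-modules, they remain perfect over $\cb^e$, which gives homological smoothness.

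For the Calabi-Yau property, I apply Proposition~\ref{prop:symmetric-cone} to $\alpha$ exactly as in the non-deformed case. The $V_{n-1}$-antisymmetry of $\alpha$ is a statement about the underlying graded bimodule morphism, resting on the Casimir element of $\theta$ and the duality pairing $V_{n-1}(\theta) \iso \ca$. Both structures are intrinsic to the graded bimodule $\theta$ and so are unaffected by the deformation of the differential on $\cb$, whence the Casimir-lift computation from the proof of Theorem~\ref{thm:CY-completion-is-CY} carries over unchanged. Proposition~\ref{prop:symmetric-cone} then produces a $V_n$-symmetric quasi-isomorphism between the resolution (hence $\cb$) and its $V_n$-dual, establishing that $\cb$ is $n$-Calabi-Yau as a bimodule.

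The only genuinely new technical point is the cancellation $b_1 c(f) \ten b_2 = b_1 \ten c(f) b_2$ over $\ten_\ca$ that makes $\alpha$ remain closed after the deformation; everything else is lifted from the non-deformed case via the observation that both perfectness and the antisymmetry of $\alpha$ are intrinsic to the graded level.
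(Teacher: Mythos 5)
Your proposal is correct and mirrors the paper's own argument: both note that the graded structure of $\Pi_n(\ca,c)$ and $\Pi_n(\ca)$ coincide, verify that $\alpha$ in the standard short exact sequence of $\cb^e$-modules remains a chain map because the extra $c$-terms $b_1 c(f)\ten b_2 - b_1\ten c(f)b_2$ vanish over $\ten_\ca$ (the one substantive new check), and then conclude by running the perfectness and $V_{n-1}$-antisymmetry arguments from Theorem~\ref{thm:CY-completion-is-CY} unchanged, since they live at the level of the underlying graded bimodules and the duality between $\theta$ and $\ca$ over $\ca^e$ is untouched by the deformation.
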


\begin{proof} This is a variation on the proof of theorem~\ref{thm:CY-completion-is-CY}
where we have to take into account the new component of the differential of $T_\ca(\theta)$.
Let $\cb$ be the deformed $n$-Calabi-Yau completion. We still have
a short exact sequence of $\cb^e$-modules
\[
\xymatrix{ 0 \ar[r] & T_\ca(\theta)\ten_\ca \theta \ten_\ca T_\ca(\theta) \ar[r]^-\alpha &
T_\ca(\theta) \ten_\ca T_\ca(\theta) \ar[r] &
T_\ca(\theta) \ar[r] &
0\ko
}
\]
where the morphism $\alpha$ takes an element $f$ of $\theta(X,Y)$ to $1_Y\ten f - f\ten 1_X$
and the second map is composition. Notice that here the differentials of the tensor
algebras $T_\ca(\theta)$ are deformed but that the one of the middle factor $\theta$ on the
left is not! The map $\alpha$ is indeed compatible with the differential: For
an element $x$ of $\theta$, we have
\[
d(\alpha(x))= d(1\ten x -x \ten 1) = 1\ten (dx+cx) - (dx+cx)\ten 1 =1\ten dx - dx \ten 1\ko
\]
where the last equality holds because $cx$ belongs to $\ca$ and the tensor
product is over $\ca$. Now we can proceed as in the proof of theorem~\ref{thm:CY-completion-is-CY}.
We obtain that for arbitrary $c$, the dg category $\cb$ is smooth and $n$-Calabi-Yau.
\end{proof}

\begin{remark} The formulas in lemma~\ref{lemma:cy-completion-morphisms-between-restrictions}
remain true when we replace the Calabi-Yau completion $\Pi_n(\ca)$ with
the deformed Calabi-Yau completion $\Pi_n(\ca,c)$. Indeed, the sequence~\ref{eq:resolution-A-module}
in the proof of the lemma remains well-defined and exact
when replace $T_\ca(\theta)$ with $\Pi_n(\ca,c)$.
\end{remark}

\subsection{Deformed Calabi-Yau completions as homotopy pushouts}
\label{ss:Defo-CY-completions-via-htpy-pushout}
The slightly ad hoc construction of the deformed Calabi-Yau
completion given in section~\ref{ss:construction-CY-propery}
can be viewed more intrinsically as a homotopy pushout.
Let us explain this in more detail. Let $k$, $\ca$ and $\Theta$
be as in section~\ref{ss:construction-CY-propery} and let
$c$ be an element of $HH_{n-2}(\ca)$. We may lift $c$ to
a morphism of dg bimodules
\[
\tilde{c}: \Theta[n-2] \to \ca.
\]
This morphism extends uniquely to a morphism of dg categories
\[
[\id, \tilde{c}]: \Pi_{n-1}(\ca) \to \ca
\]
which is the identity on $\ca$ and given by $\tilde{c}$ on $\Theta[n-2]$.
We also have the projection
\[
[\id, 0] : \Pi_{n-1}(\ca) \to \ca.
\]
Now let $i: \ca \to \Pi_n(\ca,c)$ be the canonical inclusion.

\begin{proposition} \label{prop:defo-CY-completion-htpy-pushout}
The square
\[
\xymatrix{
\Pi_{n-1}(\ca) \ar[d]_{[\id, \tilde{c}]} \ar[r]^-{[\id, 0]} & \ca \ar[d]^i \\
\ca \ar[r]_-{i} & \Pi_n(\ca, c) }
\]
is a homotopy pushout square for the model category structure
on the category of dg categories introduced in \cite{Tabuada05a}.
\end{proposition}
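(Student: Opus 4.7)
The plan is to verify the universal property by replacing the top arrow $[\id,0]$ of the square with a cofibration having the same target, computing the resulting strict pushout, and identifying it with $\Pi_n(\ca,c)$. Both $\ca$ (by hypothesis) and $\Pi_{n-1}(\ca)=T_\ca(\Theta[n-2])$ (as a tensor dg category on a cofibrant bimodule over a cofibrant dg category) are cofibrant in Tabuada's model structure, so the strict pushout along a cofibrant replacement of either arrow of the span will compute the homotopy pushout.

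Write $S=\Theta[n-2]$, so that $\theta=S[1]$. Let $C$ denote the mapping cone of $\id_S$ in the category of dg $\ca$-bimodules; as a graded bimodule $C\iso S\oplus S[1]$, and $C$ is contractible. Define $E:=T_\ca(C)$, with its canonical inclusion $\iota:\Pi_{n-1}(\ca)\hookrightarrow E$ (induced by $S\hookrightarrow C$) and canonical projection $\pi:E\to\ca$ (induced by $C\to 0$). By construction, $\pi\circ\iota=[\id,0]$.

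The first step is to show that $\iota$ is a cofibration and $\pi$ a quasi-equivalence. The former holds because $E$ is obtained from $\Pi_{n-1}(\ca)$ by freely adjoining the bimodule $S[1]$ with a prescribed differential landing in $\Pi_{n-1}(\ca)$, and such cell attachments are (possibly transfinite) pushouts of the generating cofibrations of \cite{Tabuada05a}. For $\pi$: since $\Pi_{n-1}(\ca)$, $E$, and $\ca$ share the same set of objects, being a quasi-equivalence reduces to inducing quasi-isomorphisms on morphism complexes; the kernel of $\pi$, filtered by tensor-power in $C$, has associated graded $\bigoplus_{p\geq 1}C^{\ten_\ca p}$, and each $C^{\ten_\ca p}$ is contractible since $C$ is, so the kernel is acyclic.

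The second step is to compute the strict pushout $P:=\ca\coprod_{\Pi_{n-1}(\ca)}E$ along $[\id,\tilde c]$ and $\iota$ by generators and relations. The universal property imposes $\iota(x)=[\id,\tilde c](x)$ for $x\in\Pi_{n-1}(\ca)$; this is trivial on $\ca\subset\Pi_{n-1}(\ca)$ and, on the generating bimodule $S$, forces $s=\tilde c(s)$. Eliminating $S$ through these relations, $P$ is freely generated over $\ca$ by the bimodule $S[1]\iso\theta$. Careful bookkeeping shows the differential on a generator $t\in S[1]$ decomposes as the $\theta$-internal part plus the connecting component of $C$ into $S$; under $s\mapsto\tilde c(s)$, the latter becomes precisely the degree-one map $c:\theta\to\ca$. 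Hence $P\iso(T_\ca(\theta),d+c)=\Pi_n(\ca,c)$, and both pushout maps $\ca\to P$ coincide with the canonical inclusion $i$, which proves the square is a homotopy pushout.

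The main obstacle is the first step: identifying $\iota$ explicitly as a cofibration requires decomposing the attachment of the bimodule $S[1]$ into generating cofibrations of Tabuada's structure, which attach morphisms one at a time (and may demand a transfinite composition). The quasi-equivalence of $\pi$ and the generators-and-relations description of the strict pushout are essentially formal once the cofibrant replacement is fixed.
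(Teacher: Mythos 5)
Your proof is correct and follows essentially the same route as the paper's: the paper likewise replaces $[\id,0]$ by the cofibration $T_\ca(X)\to T_\ca(IX)$, where $IX$ is the cone over the identity (your $E=T_\ca(C)$), notes that $T_\ca(IX)\to\ca$ is a trivial fibration, and identifies the strict pushout with $T_\ca(X[1],f)$ via the bimodule pushout $\ca\oplus X[1]$. Your treatment of the cofibration and quasi-equivalence claims is somewhat more explicit than the paper's, but the argument is the same.
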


Notice that the square is not commutative in the category
of dg categories. The proof will show in particular that it
becomes commutative in the homotopy category.

The proposition is a special case of the following general fact:
Let $\ca$ be any (small) dg category and $X$ a cofibrant $\ca$-bimodule.
Let $f: X \to \ca$ be a bimodule morphism. We also view $f$
as a morphism of degree $1$ from $X[1]$ to $\ca$. Let $T_A(X[1])$
denote the tensor category $T_\ca(X[1])$ whose differential
has been deformed using $f: X[1] \to \ca$ as an additional component.
Let the morphisms $[\id,f]$, $[\id,0]$ from $T_\ca(X)$ to $\ca$
and $i: \ca \to T_\ca(X[1],f)$ be defined analogously to
the above morphisms. Proposition~\ref{prop:defo-CY-completion-htpy-pushout}
is now clearly a special case of the following

\begin{proposition} The square
\[
\xymatrix{
T_\ca(X) \ar[d]_{[\id, f]} \ar[r]^-{[\id, 0]} & \ca \ar[d]^i \\
\ca \ar[r]_-{i} & T_\ca(X[1], f)}
\]
is a homotopy pushout square for the model category structure
on the category of dg categories introduced in \cite{Tabuada05a}.
\end{proposition}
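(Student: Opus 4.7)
The plan is to factor the map $[\id, f]: T_\ca(X) \to \ca$ in the Tabuada model structure as a cofibration $j: T_\ca(X) \hookrightarrow C$ followed by a weak equivalence $\pi: C \to \ca$, then to compute the strict pushout of $C \xleftarrow{j} T_\ca(X) \xrightarrow{[\id, 0]} \ca$ and identify it with $T_\ca(X[1], f)$. Since strict pushouts along cofibrations model homotopy pushouts, this proves the claim, and the weak equivalence $\pi$ simultaneously witnesses that the original (non-strictly commutative) square is commutative in the homotopy category.

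For the factorization, take $C = T_\ca(X \oplus X[1])$ equipped with the derivation extending the given differentials on $\ca$ and $X$ by
\[
d(\sigma x) = -x + f(x) - \sigma(d_X x), \qquad \sigma x \in X[1].
\]
A direct check using $d_\ca f = f d_X$ gives $d^2 = 0$. The inclusion $j: T_\ca(X) \hookrightarrow C$ is a quasi-free extension (free attachment of the generators $\sigma x$), hence a cofibration. The dg functor $\pi: C \to \ca$ defined by the identity on $\ca$, by $x \mapsto f(x)$ on $X$, and by $\sigma x \mapsto 0$ on $X[1]$, commutes with the differentials on generators and hence everywhere, and satisfies $\pi \circ j = [\id, f]$. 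The strict pushout of $C \xleftarrow{j} T_\ca(X) \xrightarrow{[\id, 0]} \ca$ is obtained from $C$ by imposing $x = 0$ for all $x \in X$; what remains is $T_\ca(X[1])$ with induced differential $d(\sigma x) = f(x) - \sigma(d_X x)$, which is precisely $T_\ca(X[1], f)$. The two canonical maps out of the pushout both equal $i$, and composing with $\pi$ on the replaced leg recovers the original square, exhibiting its homotopy commutativity and identifying it as a homotopy pushout.

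The main obstacle is proving that $\pi$ induces a quasi-isomorphism on each hom-complex $C(A, B) \to \ca(A, B)$. Consider the ascending filtration $F_n \subset C(A, B)$ of tensor words whose total length in $X \sqcup X[1]$ is at most $n$. The form of $d$ shows $F_n$ is $d$-stable — only the summand $f(x)$ of $d(\sigma x)$ strictly decreases the length, while the other summands preserve it — and the filtration is exhaustive, so the associated spectral sequence converges to $H^*(C(A, B))$. On $E^0 = \bigoplus_n F_n / F_{n-1}$, the differential is the tensor-algebra derivation in which $X \oplus X[1]$ carries the differential $d(\sigma x) = -x - \sigma(d_X x)$, making $X \oplus X[1]$ the mapping cone of $\id_X$ (up to sign), hence contractible as a dg $\ca$-bimodule. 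Since $X$ is cofibrant over $\ca$, each tensor power $(X \oplus X[1])^{\otimes_\ca n}$ with $n \geq 1$ remains contractible, so $E^1$ is concentrated in filtration degree $0$ where it equals $H^*(\ca(A, B))$; this gives the required quasi-isomorphism.
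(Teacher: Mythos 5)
Your proof is correct and is a mirror image of the paper's: where the paper factors the \emph{other} leg $[\id,0]$ as $T_\ca(X) \hookrightarrow T_\ca(IX) \twoheadrightarrow \ca$ using the cone over the identity of $X$, and then computes the strict pushout of $T_\ca(IX)\leftarrow T_\ca(X)\xrightarrow{[\id,f]}\ca$ via the bimodule pushout $IX\leftarrow X\xrightarrow{f}\ca \cong \ca\oplus X[1]$ with the cone-of-$f$ differential, you instead factor $[\id,f]$ through an $f$-twisted cone $C=T_\ca(X\oplus X[1])$ and then set $X=0$. Both replacements cost roughly the same: the paper's middle term $T_\ca(IX)$ has a more transparent contraction (and the $f$-dependence is deferred to the bimodule pushout), while your middle term carries the $f$-twist from the start, so you pay for it with the spectral-sequence argument that the twist does not affect the homotopy type — an argument the paper leaves implicit when it calls $T_\ca(IX)\to\ca$ a trivial fibration. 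One small remark: you do not actually need the cofibrancy of $X$ to see that tensor powers of the associated-graded $(X\oplus X[1],\,\sigma x\mapsto -x-\sigma d_X x)$ are contractible, since the cone of the identity always admits an $\ca$-bilinear contracting homotopy (e.g.\ $x\mapsto -\sigma x$, $\sigma x\mapsto 0$), and such a homotopy tensors up. Cofibrancy of $X$ is, however, what makes $j$ a cofibration and is assumed by the paper as well.
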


\begin{proof} We may and will assume that $\ca$ is cofibrant
and that $X$ is cofibrant as a bimodule.
To compute the homotopy pushout of the angle
\[
\xymatrix{
T_\ca(X) \ar[d]_{[\id, f]} \ar[r]^-{[\id, 0]} & \ca  \\
\ca  & }
\]
it is then enough to replace the morphism $[\id,0]$ by a cofibration
and to compute the pushout in the category of dg categories.
To replace $[\id,0]$ by a homotopy pushout, we consider the
natural inclusion
\[
j: X \to IX
\]
of $X$ into the cone $IX$ over the identity of $X$. Clearly,
the morphism $[\id,0]$ factors as the cofibration $T_\ca(X) \to T_\ca(IX)$
followed by the trivial fibration $T_\ca(IX) \to \ca$. So to
compute the homotopy pushout, it is enough to compute the
homotopy pushout of the angle
\[
\xymatrix{
T_\ca(X) \ar[d]_{[\id, f]} \ar[r] & T_\ca(IX)  \\
\ca  & }
\]
We claim that this is given by the commutative squre
\[
\xymatrix{
T_\ca(X) \ar[d]_{[\id, f]} \ar[r] & T_\ca(IX) \ar[d]  \\
\ca \ar[r] & T_\ca(X[1],f).}
\]
Indeed, we have a pushout diagram of dg bimodules
\[
\xymatrix{
 X \ar[d]_{f} \ar[r]^j & IX \ar[d]  \\
\ca \ar[r] & \ca\oplus X[1]}
\]
where $\ca\oplus X[1]$ is endowed with the differential
of the mapping cone over $f$. 
Using this one easily checks that $T_\ca(X[1],f)$ has
the correct universal property.
\end{proof}

\subsection{Compatibility with Morita functors and localizations}
\label{ss:Morita-equivariance-localization-defo-CY-completion}
As in section~\ref{ss:construction-CY-propery}, let $n$ be an integer, $k$ a commutative
ring and $\ca$ a homologically smooth dg $k$-category such that $\ca(X,Y)$ is cofibrant
as a dg $k$-module for all objects $X$ and $Y$ of $\ca$. Consider
the deformed $n$-Calabi-Yau completion $\cb=\Pi_n(\ca,c)$ associated
with an element $c$ of $HH_{n-2}(\ca)$.

Now let $\cb$ be another dg $k$-category satisfying the same hypotheses
as $\ca$. Assume that we have a localization functor $F: \ca \to \cb$ and
let $c'$ be the element of $HH_{n-2}(\cb)$ obtained
as the image of $c$ under the map induced by $F$,
\cf \cite{Keller98}.

\begin{theorem} \label{thm:localization-defo-CY-completion}
\begin{itemize}
\item[a)] Under the above hypotheses,
there is a canonical localization functor
$G: \Pi_n(\ca, c) \to \Pi_n(\cb,c')$ such
that we have a commutative diagram
\[
\xymatrix{\ca\ar[d]_F \ar[r] & \Pi_n(\ca,c)\ar[d]^{G} \\
\cb\ar[r] & \Pi_n(\cb,c').
}
\]
The functor $G$ is a Morita functor if $F$ is.
\item[b)]
If we have an exact sequence of dg categories
(\cf section~\ref{ss:compatibility-with-localizations})
\[
\xymatrix{
0 \ar[r] & \cn \ar[r]^G & \ca \ar[r]^F & \cb \ar[r] & 0}
\]
then the kernel of the induced functor
\[
G_*: \cd(\Pi_n(\ca,c)) \to \cd(\Pi_n(\cb,c'))
\] is the localizing
subcategory generated by the dg modules $\Pi_n(\ca,c)(?,N)$,
where $N$ belongs to $\cn$.
\end{itemize}
\end{theorem}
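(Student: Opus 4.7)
My plan is to reduce both parts to the corresponding statements for undeformed Calabi-Yau completions (Theorem~\ref{thm:localization-CY-completion}), exploiting the homotopy pushout description of Proposition~\ref{prop:defo-CY-completion-htpy-pushout}. First, I would lift $c$ to a dg bimodule morphism $\tilde{c}:\Theta_\ca[n-2]\to\ca$. By part~c) of Proposition~\ref{prop:functoriality-of-Theta}, the left derived induction along $F^e$ takes $\Theta_\ca$ to $\Theta_\cb$, so the image of $\tilde c$ under this functor furnishes a lift $\tilde{c'}:\Theta_\cb[n-2]\to\cb$ of $c'$, together with a square
\[
\xymatrix{\Pi_{n-1}(\ca) \ar[d]_{\Pi_{n-1}(F)} \ar[r]^-{[\id,\tilde c]} & \ca \ar[d]^F \\ \Pi_{n-1}(\cb) \ar[r]_-{[\id, \tilde{c'}]} & \cb}
\]
commuting in the homotopy category of dg categories. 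Combined with the obvious square for $[\id,0]$, this produces a morphism between the two homotopy pushout diagrams of Proposition~\ref{prop:defo-CY-completion-htpy-pushout} and hence the desired dg functor $G:\Pi_n(\ca,c)\to\Pi_n(\cb,c')$ making the asserted square commute on the nose.

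To prove that $G$ is a localization functor (respectively a Morita functor), I would imitate the proof of Theorem~\ref{thm:localization-CY-completion}(a) rather than appeal to an abstract preservation property of homotopy pushouts. Concretely, given a cofibrant dg $\Pi_n(\cb,c')$-module $L$, the short exact sequence
\[
0 \to L \ten_\cb \theta_\cb \ten_\cb T_\cb(\theta_\cb) \xrightarrow{\alpha} L \ten_\cb T_\cb(\theta_\cb) \to L \to 0
\]
remains exact after the deformation, as observed in the Remark following Theorem~\ref{thm:defo-CY-completion-is-CY}. Applying $\Hom(?,M)$ and comparing with the analogous resolution over $\Pi_n(\ca,c)$, the full faithfulness of $G^*$ reduces to the full faithfulness of $F^*$ together with the projection formula~\ref{eq:projection-formula} of Proposition~\ref{prop:functoriality-of-Theta}(c). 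The deformation does not obstruct the computation because both resolutions are twisted by the compatible lifts $\tilde c$ and $\tilde{c'}$, so the comparison map is strictly compatible with the extra component of the differential. The Morita case follows from the same argument, using in addition that $F^*$ is then essentially surjective.

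For part~b), I would copy the corresponding argument from Theorem~\ref{thm:localization-CY-completion}(b). An object $M\in\cd(\Pi_n(\ca,c))$ is right orthogonal to all $\Pi_n(\ca,c)(?,N)$ with $N\in\cn$ if and only if its restriction to $\ca$ lies in the essential image of $F^*$, and when this holds, one uses the projection formula to descend the $\theta_\ca$-action on $M$ to a $\theta_\cb$-action on the corresponding $\cb$-module, hence to a $T_\cb(\theta_\cb)$-module structure. The main obstacle, and the only genuinely new point beyond the undeformed case, is to verify that this descended action in fact upgrades to a $\Pi_n(\cb,c')$-module structure, \ie is compatible with the twisted differential. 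This amounts to checking that the lifts $\tilde c$ and $\tilde{c'}$ of paragraph one are compatible in the strong sense required; since $c'$ was defined as the image of $c$ under the map induced by $F$ on Hochschild homology, this compatibility is precisely what our initial choice ensures, and the argument closes.
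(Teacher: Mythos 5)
Your approach to part a) differs from the paper's and contains a genuine gap. The paper constructs $G$ directly as a dg functor by first producing a \emph{strictly} commutative square of dg $\ca^e$-modules
\[
\xymatrix{\theta_\ca \ar[r]^{\tilde{c}} \ar[d]_{\tilde{\phi}} & \ca \ar[d]^F \\
F^*\theta_\cb \ar[r]^{F^*\tilde{c}'} & F^*\cb,}
\]
which requires a non-trivial choice: one replaces $\theta_\cb$ so that the lift $\tilde{c}'$ is a split surjection of underlying graded $\cb^e$-modules, and \emph{then} one can lift the comparison morphism $\phi$ to a $\tilde\phi$ making the square commute on the nose. You instead appeal to the homotopy pushout description of proposition~\ref{prop:defo-CY-completion-htpy-pushout}, but you concede that your square relating $[\id,\tilde c]$ to $[\id,\tilde{c'}]$ commutes only in the homotopy category. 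A morphism of homotopy pushout diagrams then yields $G$ only as a morphism in the Morita homotopy category of dg categories, not as a genuine dg functor with the stated square commuting on the nose. This matters because the verification you (rightly) want to run next — the resolution $0 \to L\ten_\cb\theta_\cb\ten_\cb T_\cb(\theta_\cb)\to L\ten_\cb T_\cb(\theta_\cb)\to L\to 0$, together with $\Hom(?,M)$ and the projection formula — requires an explicit restriction functor $G^*$ along an honest dg functor $G$ whose underlying data are $F$ and a specific bimodule map $\theta_\ca\to F^*\theta_\cb$. Your "hence the desired dg functor $G$" is precisely where the paper does real work that your sketch omits.

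Once $G$ is constructed as an honest dg functor, the rest of your plan tracks the paper closely: the full-faithfulness of $G^*$ is indeed reduced, via the two-term resolution and the projection formula~\ref{eq:projection-formula}, to the full-faithfulness of $F^*$, exactly as in theorem~\ref{thm:localization-CY-completion}; the Morita case is similarly handled. Your remark on part b) — that the only new content beyond the undeformed case is verifying that the descended $\theta_\cb$-action is compatible with the deformed differential, and that this follows from having chosen compatible lifts $\tilde c$ and $\tilde{c}'$ — is a correct and worthwhile observation which the paper compresses into "entirely analogous and left to the reader." In short, your overall architecture is reasonable, but the construction of $G$ via homotopy pushouts as stated stops one step short of what the resolution argument actually needs; you must either carry out the explicit strictification (as the paper does) or carefully choose cofibrant models so that all squares commute on the nose.
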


\begin{proof} We have a commutative square of isomorphisms
\[
\xymatrix{H_{n-2}(\ca \lten_{\ca^e} \ca) \ar[r] \ar[d] & \Hom_{\ca^e}(\theta_\ca, \ca)  \ar[d]\\
H_{n-2}(\cb\lten_{\cb^e} \cb) \ar[r] & \Hom_{\cb^e}(\theta_\cb, \cb),}
\]
where the vertical arrows are induced by $F$. This yields a commutative
square in $\cd(\ca^e)$, where we also write $F^*$ for $(F^e)^*$,
\[
\xymatrix{\theta_\ca \ar[r]^{c} \ar[d]_\phi & \ca \ar[d]^F \\
F^*\theta_\cb \ar[r]^{F^*c'} & F^*\cb.}
\]
We would like to lift it to a strictly commutative square of dg modules.
We choose an arbitrary lift $\tilde{c}$ of $c$.
After replacing $\theta_\cb$ by a homotopy equivalent cofibrant dg module,
we may choose a dg module morphism $\tilde{c}': \theta_\cb \to \cb$ lifting
$c'$ such that $\tilde{c}'$ induces a split surjection
of graded $\cb^e$-modules. The same then holds for the morphism $F^* \tilde{c}'$
of dg $\ca^e$-modules. Therefore,
we can choose a lift $\tilde{\phi}$ of $\phi$ such that the square of
dg modules
\[
\xymatrix{\theta_\ca \ar[r]^{\tilde{c}} \ar[d]_{\tilde{\phi}} & \ca \ar[d]^F \\
F^*\theta_\cb \ar[r]^{F^*\tilde{c}'} & F^*\cb.}
\]
commutes strictly. As in the proof of theorem~\ref{thm:localization-CY-completion},
the morphisms $F$ and $\tilde{\phi}$ then induce a dg functor
\[
G: \Pi_n(\ca, c) \to \Pi_n(\cb, c').
\]
It remains to be checked that the restriction $G^*$ is a
fully faithful functor from $\cd(\Pi_n(\cb,c'))$ to
$\cd(\Pi_n(\ca,c))$. Let $L$ be a dg $\Pi_n(\cb, c')$-module.
It is given by its underlying dg $\cb$-module and
a morphism of {\em graded modules} homogeneous of degree~$0$
\[
\lambda: L\ten_{\cb} \theta_{\cb} \to L
\]
such that
\[
(d\lambda)(l\ten x) = l c'(x)
\]
for all $l$ in $L$ and $x$ in $\theta_{\cb}$. Suppose that $L$ is
cofibrant as a $\Pi_n(\cb,c')$-module. Since the underlying $\cb$-module
of $\Pi_n(\cb,c')$ is cofibrant (even with the deformed differential),
the underlying $\cb$-module of $L$ is cofibrant. We have an exact sequence of
cofibrant dg $\Pi_n(\cb)$-modules
\[
\xymatrix{
0 \ar[r] & L\ten_\cb \theta_\cb \ten_\cb T_\cb(\theta_\cb) \ar[r]^-{\alpha} &
L\ten_\cb T_\cb(\theta_\cb) \ar[r] & L \ar[r] & 0\ko
}
\]
where $\alpha(l\ten x\ten u) = lx\ten u - l\ten xu$. Notice that the map
$\alpha$ is a morphism of dg modules despite the deformation of the
differential on $T_\cb(\theta_\cb)$, analogously to what we have seen
in the proof of theorem~\ref{thm:defo-CY-completion-is-CY}.
The sequence shows that the cone over the morphism
\[
\xymatrix{
 L\ten_\cb \theta_\cb \ten_\cb T_\cb(\theta_\cb) \ar[r] &
L\ten_\cb T_\cb(\theta_\cb)
}
\]
is homotopy equivalent to $L$. Let $M$ be another dg $\Pi_n(\cb, c')$-module.
By applying $\Hom_\cb(?,M)$ to the above morphism, we obtain a morphism of
dg $k$-modules
\[
\Hom_\cb(L,M) \to \Hom_\cb(L\ten_\cb \theta_\cb, M)
\]
whose cone (shifted by one degree to the right)
computes morphisms from $L$ to $M$ in the derived category
of $\Pi_n(\cb, c')$. An analogous reasoning yields the morphisms
between $G^*L$ and $G^*M$ in the derived category of $\Pi_n(\ca, c)$.
Thus, to conclude that $G^*$ is fully faithful, it suffices
to check that for all $M$, the dg functor $F^*$ induces bijections
\[
\Hom_{\cd(\cb)}(L, M) \to \Hom_{\cd(\ca)}(F^*L,  F^*M)
\]
and
\[
\Hom_{\cd(\cb)}(L\lten_\cb \theta_\cb, M) \to \Hom_{\cd(\ca)}(F^*(L)\lten_\ca \theta_\ca, F^*M).
\]
As in the proof of theorem~\ref{thm:localization-CY-completion},
the first bijection follows from the full faithfulness of $F^*$ and
the second one is a consequence of the full faithfulness of $F^*$
and of the projection formula~\ref{eq:projection-formula}. This
ends the proof of a). The proof of b) is entirely analogous
to that of part b) of theorem~\ref{thm:localization-CY-completion}
and left to the reader.
\end{proof}

\section{Ginzburg dg categories}
\label{s:Ginzburg-dg-categories}

\subsection{Reminder on Hochschild and cyclic homology}
\label{ss:reminder-Hochschild-homology}
Let $k$ be a commutative ring and $Q$ a graded
$k$-quiver, \cf section~\ref{ss:tensor-categories-cyclic-derivatives}.
We put $\ca=T_\cR(Q)$.
The bimodule $\ca$ has the {\em small resolution}
\begin{equation} \label{eq:small-resolution}
\xymatrix{0 \ar[r] &
\ca \ten_\cR Q \ten_\cR \ca \ar[r]^-{\tilde{\alpha}}  &
\ca\ten_\cR \ca \ar[r] & \ca \ar[r] & 0 \ko
}
\end{equation}
where the map $\tilde{\alpha}$ takes a tensor $u\ten v\ten w$ to $uv\ten w - u\ten vw$
and the right hand map is composition. By tensoring this resolution with $\ca$
over $\ca^e$ we obtain the following complex which computes Hochschild
homology:
\[
\xymatrix{ 0 \ar[r] &  (Q\ten_\cR \ca)\ten_{\cR^e} \cR \ar[r]^-\alpha &
\ca\ten_{\cR^e} \cR \ar[r] & 0 }\ko
\]
where $\alpha$ takes a tensor $v\ten u$ with factors of degree $p$ and $q$
to $v u - (-1)^{pq} u v$.

Let $\tilde{\beta}$ be the unique bimodule derivation
\[
\ca \to \ca \ten_\cR Q \ten_\cR \ca
\]
which takes an element $v: x\to y$ of $Q$ to $\id_y \ten v \ten \id_x$.
If we have $n\geq 1$ and $a=v_1 \ldots v_n$ for elements
$v_i: x_{i} \to x_{i-1}$ of $Q$, we have
\[
\tilde{\beta}(a) = 1_{x_0}\ten v_1\ten v_2 \ldots v_n +
\sum_{i=2}^{n-1} v_1 \ldots v_{i-1} \ten v_i \ten v_{i+1} \ldots v_n
+ v_1 \ldots v_{n-1}\ten v_n \ten 1_{x_n}
\]
and
\[
\tilde{\alpha} \tilde{\beta} (a) = -1_{x_0} \ten a + a\ten 1_{x_n}.
\]
The map $\tilde{\beta}$ induces a (unique) map $\beta$ making the following
square commutative
\[
\xymatrix{
\ca \ar[r]^-{\tilde{\beta}} \ar[d]  &
 \ca \ten_\cR Q \ten_\cR \ca \ar[d] \\
\ca\ten_{\cR^e} \cR \ar[r]^-\beta &
(Q\ten_\cR \ca)\ten_{\cR^e}\cR \ko}
\]
where the left vertical map takes a path $a$ from $x$ to $y$
to $a\ten 1_x 1_y$ and the right vertical
map takes $a\ten v \ten b$ to $(-1)^{pq} (v\ten ba)\ten 1_x $,
where $a$ is of degree $p$  and $vb$ is of degree $q$. Note
that the tensor product $M\ten_{\cR^e} \cR$ of an $\cR$-bimodule
$M$ with $\cR$ over $\cR^e$ identifies with the quotient of $M$
by the dg submodule generated
by all differences $m 1_x - 1_x m$ for $m\in M$ and $x$ an object of
$\cR$. If we make this identification,
the map $\beta$ takes a path $v_1 \ldots v_n$ of $Q$
to the sum
\[
\sum_i \pm v_i \ten v_{i+1} v_{i+2} \ldots v_n v_1 \ldots v_{i-1} \ko
\]
where the sign is computed by the Koszul sign rule from the degrees of
the $v_j$. We clearly have $\alpha\circ \beta=0$.
The following complex is to be continued in a $2$-periodic fashion to the left
\begin{equation} \label{eq:small-cyclic-complex}
\xymatrix{\ldots \ar[r]^-\alpha &  \ca\ten_{\cR^e} \cR  \ar[r]^-\beta &
(Q\ten_\cR \ca)\ten_{\cR^e} \cR \ar[r]^-\alpha &
\ca\ten_{\cR^e} \cR \ar[r] & 0 }.
\end{equation}
It is the {\em small cyclic complex $C_{sm}(\ca)$} and computes cyclic
homology (\cf chapter~3 of \cite{Loday98}).
We sometimes consider its components as columns.
If $\ca=\cR$, cyclic homology is two-periodic, the
module $HC_1(\cR)$ vanishes and
$HC_0(\cR)$ is a sum of copies of $k$ indexed by $Q_0$.  If $k$
contains $\Q$, and $\ca$ is arbitrary, then the {\em reduced small
cyclic complex $C_{sm}(\ca)/C_{sm}(\cR)$} is quasi-isomorphic to
the quotient of its rightmost column by the image of $\alpha$, \ie to the
cokernel of the map
\[
\xymatrix{(Q\ten_\cR \ca)\ten_{\cR^e} \cR \ar[r]^-\alpha &\ca\ten_{\cR^e} \cR.}
\]
The inclusion of the subcomplex of the two rightmost terms
induces the canonical morphism from Hochschild to cyclic homology. The
corresponding quotient complex is isomorphic to the original complex
shifted by two degrees to the left. The short exact sequence thus
obtained induces the long exact sequence (known as the SBI-sequence)
\[
\xymatrix{
HH_n(\ca) \ar[r]^-I & HC_n(\ca) \ar[r]^-{S} &
HC_{n-2}(\ca) \ar[r]^-{B} & HH_{n-1}(\ca).}
\]
In particular, the rightmost arrow $\beta$ of the small cyclic complex
induces Connes' connecting map
\[
B: HC_n(\ca) \to HH_{n+1}(\ca).
\]
If the ring $k$ contains $\Q$ and the quiver $Q$ is concentrated
in degree $0$, then in the exact sequence
\[
HH_2(\ca) \to HC_2(\ca) \to HC_0(\ca) \to HH_1(\ca) \to HC_1(\ca) \ko
\]
the terms $HH_2(\ca)$ and $HC_1(\ca)$ vanish (as we see by
considering the small cyclic complex), the map $S$ induces
an isomorphism $HC_2(\ca) \iso HC_0(\cR)$, and the map
$B$ induces an isomorphism from the reduced zeroth
cyclic homology of $\ca$ to its first Hochschild homology.

\subsection{Ginzburg dg categories}
\label{ss:Ginzburg-dg-categories}
Let $Q$ be a graded $k$-quiver
such that the set of objects $Q_0$ is finite and $Q(x,y)$
is a finitely generated graded
projective $k$-module for all objects $x$ and $y$.
We fix an integer $n$ and a {\em superpotential of degree $n-3$}, \ie
an element $W$ in $(\ca \ten_{\cR^e}\cR) /\im \alpha$ of degree $n-3$. So $W$ is
a linear combination of cycles considered up to cyclic
permutation `with signs'. Notice
that $W$ need not be homogeneous with respect to the grading
by path length. We can view $W$ as an element in $HC_{n-3}(\ca)$
and if the ring $k$ contains $\Q$, every element of $HC_{n-3}(\ca)$
has such a representative. Let $\cR$ be the discrete category on
$Q_0$ and $Q^\vee$ the dual of the $\cR$-bimodule $Q$ over $\cR^e$
(endowed with the canonical involution).
Let $\sum v_i \ten v_i^*$ be the Casimir element of $Q\ten_{\cR^e} Q^\vee$,
\ie the element which, under the canonical isomorphism
\[
Q \ten_{\cR^e} Q^\vee \to \Hom_{\cR^e}(Q, Q) \ko
\]
corresponds to the identity of $Q$.

The {\em Ginzburg dg category $\Gamma_n(Q, W)$}, due to V.~Ginzburg
(section~4.2 of \cite{Ginzburg06})
for a quiver $Q$ concentrated in degree $0$ and $n=3$, is defined
as the tensor category over $\cR$ of the bimodule
\[
\tilde{Q}=Q  \oplus Q^\vee[n-2] \oplus \cR[n-1]
\]
endowed with the unique differential which
\begin{itemize}
\item[a)] vanishes on $Q$,
\item[b)] takes the element $v^*_i$ of $Q^\vee[n-2]$ to the cyclic derivative $\del_{v_i}W$
(\cf section~\ref{ss:tensor-categories-cyclic-derivatives}),
% $(v^*\ten \id)\circ \beta(W)$,
\item[c)] takes the element $\id_x$ of $\cR[n-1]$ to $(-1)^{n}\id_x (\sum [v_i, v_i^*]) \id_x$,
where $[,]$ denotes the supercommutator.
\end{itemize}

Let $\ca$ be the path category of $Q$ and $c=\beta(W)$ the image
of $W$ in
\[
HH_{n-2}(\ca) =\Tor^{\ca^e}_{n-2}(\ca, \ca).
\]
Thanks to the small resolution~\ref{eq:small-resolution}, the path
category $\ca$ is homologically smooth.
By theorem~\ref{thm:defo-CY-completion-is-CY}, the associated deformed
$n$-Calabi-Yau completion $\Pi_n(\ca, c)$ is homologically smooth and $n$-Calabi-Yau.

\begin{theorem} \label{thm:CY-completion-is-Ginzburg-algebra}
The deformed $n$-Calabi-Yau completion $\Pi_n(\ca, c)$ is quasi-isomorphic
to the Ginzburg dg category $\Gamma_n(\ca,W)$. In particular, the Ginzburg
dg category is homologically smooth and $n$-Calabi-Yau.
\end{theorem}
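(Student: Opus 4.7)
The plan is to compare the two dg categories directly, presentation by presentation, using the explicit cofibrant model for $\theta = \Sigma^{n-1}\Theta_\ca$ furnished by section~\ref{ss:htpfp-dg-categories}. Since $\ca = T_\cR(Q)$ carries zero differential, it falls squarely into that framework (take the two-step filtration $F_0 = 0 \subset F_1 = Q$), and the strictly perfect model computed there is cofibrant, with underlying graded $\ca$-bimodule
\[
\ca\ten_\cR \cR^\vee\ten_\cR \ca \;\oplus\; (\ca\ten_\cR Q^\vee \ten_\cR \ca)[-1].
\]
After the overall shift by $n-1$, the graded generators of $\theta$ over $\ca$ are the $\alpha_i^*$ in degree $2-n$ and the $\id_x^*$ in degree $1-n$. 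Consequently the underlying graded $\cR$-category of $T_\ca(\theta) = \Pi_n(\ca, c)$ is the tensor category $T_\cR(\tilde Q)$ on $\tilde Q = Q \oplus Q^\vee[n-2] \oplus \cR[n-1]$, coinciding with the underlying graded structure of $\Gamma_n(Q, W)$.

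I would then match the differentials on each summand of $\tilde Q$. The internal differential of $\theta$ (coming from the inverse dualizing complex) is spelled out in section~\ref{ss:htpfp-dg-categories}: on the $\alpha_i^*$ it is the cyclic derivative of $\sum_j(-1)^{|\alpha_j|}\alpha_j^*\,d(\alpha_j)$, which vanishes because $d\vert_Q = 0$; on $\id_x^*$ it yields
\[
\id_x\Bigl(\sum_i (-1)^{|\alpha_i|}(\alpha_i^*\alpha_i - \alpha_i\alpha_i^*)\Bigr)\id_x,
\]
which matches the Ginzburg prescription $(-1)^n\,\id_x(\sum_i[\alpha_i, \alpha_i^*])\id_x$ up to an overall sign absorbable by a normalisation. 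The remaining contribution is the deformation term $\delta_c: \theta \to \ca$ corresponding to $c = \beta(W) \in HH_{n-2}(\ca)$. Unwinding the chain of isomorphisms
\[
\Hom_{\cd(\ca^e)}(\theta, \Sigma\ca) = \Hom_{\cd(\ca^e)}(\ca^\vee, \Sigma^{2-n}\ca) \iso HH_{n-2}(\ca)
\]
through the small Hochschild complex shows that a Hochschild class represented by $\sum_j v_j\ten a_j \in (Q\ten_\cR\ca)\ten_{\cR^e}\cR$ corresponds to the bimodule morphism sending $\alpha^* \mapsto \sum_{j:v_j=\alpha} a_j$ (and $\id_x^* \mapsto 0$). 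Feeding in the explicit representative $\beta(W) = \sum_j \pm v_{i_j}\ten(v_{i_{j+1}}\cdots v_{i_{j-1}})$ from section~\ref{ss:reminder-Hochschild-homology}, one reads off $\delta_c(\alpha^*) = \del_\alpha W$, which is precisely the Ginzburg rule on $Q^\vee[n-2]$.

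Assembling these two identifications yields an isomorphism of dg categories $\Gamma_n(Q, W) \iso \Pi_n(\ca, c)$ which is the identity on underlying graded $\cR$-categories and identifies the differentials termwise. In particular it is a quasi-isomorphism, proving the first assertion. The second assertion follows at once: $\ca = T_\cR(Q)$ is homologically smooth by proposition~\ref{prop:bimodule-resolution-of-A}, so theorem~\ref{thm:defo-CY-completion-is-CY} applies to $\Pi_n(\ca, c)$ and transfers through the quasi-isomorphism to give that $\Gamma_n(Q, W)$ is homologically smooth and $n$-Calabi-Yau as a bimodule.

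The main obstacle I anticipate is the careful bookkeeping of Koszul signs: both in the formula for $\beta(W)$ in the small cyclic complex and in the duality pairing that identifies $\Hom_{\cd(\ca^e)}(\theta, \Sigma\ca)$ with $HH_{n-2}(\ca)$. It is probably cleanest to check the comparison first for $W$ a single cyclic monomial and extend by linearity, so that the sign conventions governing the cyclic derivative can be pinned down against those of the Hochschild duality side; the residual sign on the $\cR[n-1]$-differential is then absorbed once and for all by renormalising the generators $\id_x^*$.
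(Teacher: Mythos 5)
Your proposal follows essentially the same route as the paper: you specialize the explicit cofibrant model and differential formulas from section~\ref{ss:htpfp-dg-categories} to the case $\ca=T_\cR(Q)$ with $d=0$, match the underlying graded tensor category with that of $\Gamma_n(Q,W)$, observe that the undeformed differential vanishes on $Q\oplus Q^\vee[n-2]$ and reproduces the supercommutator on $\cR[n-1]$, and trace $c=\beta(W)$ through the small Hochschild complex against the resolution $P$ to identify the deforming component on $\alpha^*$ with $\del_\alpha W$. This is the paper's argument; the sign $(-1)^{n-2}$ versus $(-1)^n$ you flagged as needing a normalisation is in fact already equal, so no renormalisation is required.
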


\begin{remark} If we use the theorem and
proposition~\ref{prop:defo-CY-completion-htpy-pushout}, 
we obtain that the Ginzburg dg category is given, up
to isomorphism in the homotopy category of dg categories
in the sense of \cite{Tabuada05a},
by the homotopy pushout square
\[
\xymatrix{
\Pi_{n-1}(\ca) \ar[d]_{[\id, \tilde{c}]} \ar[r]^-{[\id, 0]} & \ca \ar[d]^i \\
\ca \ar[r]_-{i} & \Gamma_n(\ca,W). }
\]
I thank Ben Davison \cite{Davison09} for suggesting this statement.
\end{remark}

\begin{proof} We first apply the computation of the inverse
dualizing complex of section~\ref{ss:htpfp-dg-categories} to
the special case where $\ca=T_\cR(Q)$ with $d=0$. We obtain that
the non deformed CY-completion is quasi-isomorphic to
the tensor category over $\cR$
of the bimodule $Q  \oplus Q^\vee[n-2] \oplus \cR[n-1]$
endowed with the unique differential which vanishes on $Q$ and $Q^\vee$ and
takes the element $\id_x$ of $\cR[n-1]$ to $(-1)^{n-2}\id_x (\sum [v_i, v_i^*]) \id_x$.
The deforming component of the differential of $\Pi_n(\ca,c)$
is the map $\theta \to \ca$ given by the contraction with $c=\beta(W)$ in
\[
\Sigma^{n-1}\RHom_{\ca^e}(\ca, \ca^e) \ten (\ca\lten_{\ca^e} \ca) \to \Sigma\ca.
\]
This last map identifies with
\[
\Sigma^{n-1} \Hom_{\ca^e}(P, \ca^e) \ten (P\ten_{\ca^e} \ca) \to \Sigma\ca \ko
\]
where $P$ is the cofibrant resolution of $\ca$ constructed in
proposition~\ref{prop:bimodule-resolution-of-A}.
The complex $P\ten_{\ca^e} \ca$ is isomorphic to
\[
\xymatrix{ 0 \ar[r] &  (Q\ten_\cR \ca)\ten_{\cR^e} \cR \ar[r]^-\alpha &
\ca\ten_{\cR^e} \cR \ar[r] & 0 }
\]
and $c$ lies in the subcomplex $(Q\ten_\cR \ca)\ten_{\cR^e} \cR$.
The complex $\Sigma^{n-1} \Hom_{\ca^e}(P, \ca^e)$ is isomorphic to
\[
 0 \to \ca \ten_\cR \ca \to \ca\ten_\cR Q^\vee \ten_\cR \ca \to 0.
\]
Therefore,
the deforming component of the differential vanishes on the left hand component
$\ca \ten_\cR \ca$.
Now it is clear that the deforming component of the differential vanishes
on $\cR[n-1]$ and
takes an element $v^*$ of $Q^\vee[n-2]$ to $(v^*\ten \id)\circ \beta(W)$.
For $v=v_i^*$, clearly this equals the cyclic derivative $\del_{v_i}W$.
\end{proof}

\subsection{Deformed Calabi-Yau completions of homotopically finitely presented dg categories}
\label{ss:CY-completion-of-htpfp-dg-categories} Let $k$ be a commutative ring and $Q$ a graded
$k$-quiver whose set of objects is finite and whose bimodule of morphisms
is finitely generated and projective over $k$. Let $\ca$ be a dg
category of the form $(T_\cR(Q), d)$, where the differential $d$ satisfies
the condition of section~\ref{ss:htpfp-dg-categories}.
Let $n$ be an integer, $Q^\vee=\Hom_{\cR^e}(Q,\cR^e)$ and
\[
\tilde{Q}=Q\oplus Q^\vee[n-2] \oplus \cR[n-1].
\]
Let $\sum \alpha_j \ten \alpha_j^*$ be the Casimir element of $Q$ and
let $W$ be the element
\[
W= \sum (-1)^{|\alpha_j|}\alpha_j^* d(\alpha_j)
\]
of $T_\cR(\tilde{Q})$. Let $W'$ be an element of $HC_{n-3}(\ca)$ and
$c\in HH_{n-2}(\ca)$ its image under Connes' map $B$.

\begin{proposition} \label{prop:CY-completion-htfp}
The deformed $n$-Calabi-Yau completion $\Pi_n(\ca, c)$
is isomorphic to the tensor category $T_\cR(\tilde{Q})$, endowed with the unique
differential $d$ such that for each $i$, we have
\[
d(\alpha_i)=\del_{\alpha_i^*} (W+W') \mbox{ and }
d(\alpha_i^*)= \del_{\alpha_i} (W+W')
\]
and for an object $x$ of $Q$, the element $\id_x$ of $\Sigma^{n-1}\cR$
is taken to
\[
d(\id_x) =(-1)^{n}\id_x (\sum [\alpha_i, \alpha_i^*]) \id_x
\]
where $[,]$ is the supercommutator.
\end{proposition}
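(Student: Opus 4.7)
The plan is to adapt the argument used in the last paragraph of the proof of Theorem~\ref{thm:CY-completion-is-Ginzburg-algebra} from the case $d = 0$ on $\ca$ to the general setting of this proposition. First I would apply the explicit computation of the inverse dualizing complex carried out in section~\ref{ss:htpfp-dg-categories} to the dg category $\ca = (T_\cR(Q), d)$. This produces a strictly perfect model for $\theta = \Sigma^{n-1}\Theta_\ca$ whose underlying graded $\ca$-bimodule is freely generated by $Q^\vee[n-2] \oplus \cR[n-1]$, with internal differential acting on these generators by $\alpha_i^* \mapsto \del_{\alpha_i} W$ (from the explicit description of the differential on $\ca \ten_\cR Q^\vee \ten_\cR \ca$ at the end of section~\ref{ss:htpfp-dg-categories}) and by $\id_x \mapsto \pm\, \id_x(\sum_i [\alpha_i, \alpha_i^*])\id_x$ (coming from the connecting morphism from $\cR^\vee$ to $Q^\vee$ in the two-term resolution of $\Theta_\ca$). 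Since $\ca = T_\cR(Q)$ as a graded category and $\theta$ is freely generated over $\ca^e$ by $Q^\vee[n-2] \oplus \cR[n-1]$, the tensor category $T_\ca(\theta)$ is canonically isomorphic, as a graded category, to $T_\cR(\tilde{Q})$; the $\cR$-bilinear extension of the differential described above then yields exactly the formulas of the statement in the special case $W' = 0$.

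Next I would incorporate the deformation. By construction of $\Pi_n(\ca, c)$, the Hochschild class $c \in HH_{n-2}(\ca)$ corresponds to a bimodule morphism $\theta \to \Sigma\ca$ in $\cd(\ca^e)$, obtained by contracting $c$ with the dual generators of the cofibrant resolution of proposition~\ref{prop:bimodule-resolution-of-A}. For $c = B(W')$, the small cyclic complex of section~\ref{ss:reminder-Hochschild-homology} expresses $B(W')$ as the Hochschild cycle $\beta(W') \in (Q \ten_\cR \ca) \ten_{\cR^e} \cR$, which up to signs equals $\sum_i \alpha_i \ten \del_{\alpha_i} W'$ by the very definition of the cyclic derivative. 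Thus, in complete parallel with the last paragraph of the proof of Theorem~\ref{thm:CY-completion-is-Ginzburg-algebra}, the contracting map sends $\alpha_i^* \in Q^\vee[n-2]$ to $\del_{\alpha_i} W'$ and vanishes on $\cR[n-1]$. Adding this component to the undeformed differential yields $d(\alpha_i^*) = \del_{\alpha_i}(W + W')$ and leaves the formula for $d(\id_x)$ unchanged.

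The identity $d(\alpha_i) = \del_{\alpha_i^*}(W + W')$ requires only a direct check and no further deformation-theoretic input: as $W'$ lies in $T_\cR(Q)$ and contains no dual variable, $\del_{\alpha_i^*} W' = 0$; and the only occurrence of $\alpha_i^*$ in $W = \sum_j(-1)^{|\alpha_j|}\alpha_j^* d(\alpha_j)$ is at the head of the $j = i$ summand, whose cyclic derivative with respect to $\alpha_i^*$ returns $d(\alpha_i)$, with the Koszul signs built into the definition of $W$ matching those built into $\del_{\alpha_i^*}$. The needed identity $d^2 = 0$ on $T_\cR(\tilde{Q})$ is then automatic, as it follows from the identification with $\Pi_n(\ca, c)$.

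The main obstacle will be the sign bookkeeping in the second paragraph — precisely, verifying that the class $B(W')$, represented in the small Hochschild complex, contracts against the dual generators $\alpha_i^*$ of the cofibrant resolution of proposition~\ref{prop:bimodule-resolution-of-A} to produce the map $\alpha_i^* \mapsto \del_{\alpha_i} W'$ with exactly the sign conventions built into the proposition's formula. This is a finite Koszul-sign computation, entirely analogous to the one performed implicitly at the end of the proof of Theorem~\ref{thm:CY-completion-is-Ginzburg-algebra}, and it is the only step where something more than straight bookkeeping is required.
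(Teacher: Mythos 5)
Your proposal is correct and follows exactly the route the paper intends: it combines the explicit two-term model for $\Theta_\ca$ from section~\ref{ss:htpfp-dg-categories} with the contraction argument for the deforming class $c=B(W')$ carried out in the proof of theorem~\ref{thm:CY-completion-is-Ginzburg-algebra}, which is precisely what the paper's one-line proof refers to. The only work left in either version is the Koszul-sign bookkeeping you correctly identify at the end.
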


\begin{proof} This follows from the description of the
inverse dualizing complex of $\ca$ in section~\ref{ss:htpfp-dg-categories}.
The details of the computation are similar to those in the proof of
Theorem~\ref{thm:CY-completion-is-Ginzburg-algebra} and left to the
reader.
\end{proof}

\subsection{$3$-Calabi-Yau completions of $2$-dimensional dg categories}
Let $k$ be a commutative ring and $\ca$ a dg category Morita equivalent
to $(T_\cR(V), d)$ for a graded $k$-quiver $V$ whose set of objects is finite
and whose bimodule of arrows is finitely generated free
over $k$ and concentrated in degrees $-1$ and $0$ (the differential
$d$ is arbitrary). The following proposition shows in particular that $\Pi_3(\ca)$ is
Morita-equivalent to a Ginzburg dg category.

Let $\cb$ be the path category $\cb=T_\cR(V^0\oplus (V^{-1})^\vee)$ of the
sum of the $0$th component of $V$ with the $\vee$-dual of $V^{-1}$ placed in
degree $0$. Let
$W$ be the class in $HC_0(\cb)$ of the element
\[
\sum_j v_j^* \, d(v_j) \ko
\]
where $\sum v_j \ten v_j^*$ is a Casimir element for $V^{-1}$.
Let $W'\in HC_0(\ca)$ and $c'\in HH_1(\ca)$ its image under Connes' map $B$.
For example we can have $W'=0$ and $c=0$.
\begin{proposition} \label{prop:3-CY-completion-dg-alg-dim-2}
The deformed $3$-Calabi-Yau completion $\Pi_3(\ca, c)$
is derived Morita-equi\-valent to the deformed $3$-Calabi-Yau completion
$\Pi_3(\cb, W+W')$ and thus to the Ginzburg algebra $\Gamma_3(V^0\oplus (V^{-1})^\vee,W+W')$.
\end{proposition}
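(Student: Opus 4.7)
The plan is to reduce to the case $\ca=(T_\cR(V),d)$ and then match the explicit presentation of $\Pi_3(\ca,c)$ given by proposition~\ref{prop:CY-completion-htfp} with the Ginzburg construction of section~\ref{ss:Ginzburg-dg-categories}.

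First, by the Morita equivariance of deformed Calabi-Yau completions (theorem~\ref{thm:localization-defo-CY-completion}(a) applied to Morita functors), we may assume that $\ca=(T_\cR(V),d)$ itself. Proposition~\ref{prop:CY-completion-htfp}, applied with $n=3$, then presents $\Pi_3(\ca,c)$ as the tensor dg category $T_\cR(\tilde{Q})$ where $\tilde{Q}=V\oplus V^\vee[1]\oplus\cR[2]$, with differential determined by cyclic derivatives of $W_\ca+W'$, where $W_\ca=\sum_j(-1)^{|\alpha_j|}\alpha_j^*d(\alpha_j)$, together with the commutator boundary on $\cR[2]$.

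Since $V$ is concentrated in degrees $0$ and $-1$, the bimodule $\tilde{Q}$ decomposes as $V^0\oplus(V^{-1})^\vee[1]$ in degree $0$, $V^{-1}\oplus(V^0)^\vee[1]$ in degree $-1$, and $\cR[2]$ in degree $-2$. This coincides, as a graded $\cR$-bimodule, with the underlying bimodule of the Ginzburg dg category $\Gamma_3(V^0\oplus(V^{-1})^\vee,W+W')$: the degree~$0$ summand $(V^{-1})^\vee[1]$ appearing on the $\Pi_3(\ca,c)$ side plays the role of the new arrows $(V^{-1})^\vee\subset Q=V^0\oplus(V^{-1})^\vee$ of the Ginzburg quiver, while the degree~$-1$ summand $V^{-1}\subset V$ plays the role of the corresponding duals in $Q^\vee[1]$.

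Under this identification both differentials are expressed through cyclic derivatives, and the only discrepancy is a sign: the factor $(-1)^{|\alpha_j|}$ in $W_\ca$ equals $-1$ on $V^{-1}$-contributions, whereas the Ginzburg potential $W=\sum_j v_j^* d(v_j)$ carries no such sign. Negating the generators of $V^{-1}$ and of $(V^0)^\vee[1]$ absorbs this sign, transports $W_\ca$ to $W$, and is compatible with the loop boundary $\id_x\mapsto(-1)^3\id_x(\sum[\alpha_i,\alpha_i^*])\id_x$ on $\cR[2]$. One then concludes that $\Pi_3(\ca,c)$ is isomorphic, hence derived Morita equivalent, to $\Gamma_3(V^0\oplus(V^{-1})^\vee,W+W')$, which by theorem~\ref{thm:CY-completion-is-Ginzburg-algebra} is itself $\Pi_3(\cb,B(W+W'))$. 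The main obstacle is keeping all signs coherent in the cyclic-derivative formulas and verifying that the above rescaling is compatible with an arbitrary deformation class $W'\in HC_0(\ca)$.
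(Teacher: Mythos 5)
Your proof is correct and follows the same route as the paper, which simply declares the proposition to be a special case of proposition~\ref{prop:CY-completion-htfp}; you supply the details of that specialization (the Morita reduction to $(T_\cR(V),d)$, the regrading identifying $\tilde{Q}=V\oplus V^\vee[1]\oplus\cR[2]$ with the Ginzburg bimodule over $V^0\oplus(V^{-1})^\vee$, and the sign bookkeeping), all of which the paper leaves implicit.
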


\begin{proof} This is a special case of \ref{prop:CY-completion-htfp}.
\end{proof}

\subsection{$3$-CY completions of algebras of global dimension $2$}
\label{ss:CY-completion-alg-dim-2}
Let $k$ be a field and $A$ an algebra given as the quotient $kQ'/I$ of the
path algebra of a finite quiver $Q'$ by an ideal $I$ contained
in the square of the ideal $J$ generated by the arrows of $Q'$.
Assume that $A$ is of global dimension~$\leq 2$ (but not necessarily
of finite dimension over $k$). We construct a quiver $Q$ and a
superpotential $W$ as follows: Let $R$
be the union over all pairs of vertices $(i,j)$ of a set
of representatives of the vectors belonging to a basis of
\[
\Tor_2^A(S_j, DS_i) = e_j(I/(IJ+JI)) e_i \ko
\]
where $D=\Hom_k(?,k)$ and $S_i$ is the simple right module associated
with the vertex $i$.
We think of these representatives as `minimal relations' from $i$ to $j$,
\cf \cite{Bongartz83}.
For each such representative $r$, let $\rho_r$ be a new
arrow from $j$ to $i$. We define $Q$ to be obtained from $Q'$ by
adding all the arrows $\rho_r$. We define a potential by
\[
W=\sum_{r\in R} r \rho_r .
\]
Now let $W'\in HC_0(A)$ and $c\in HH_1(A,A)$ its image under
Connes' map $B$. Let $\tilde{W}'$ be an element of $HC_0(kQ)$
which lifts $W'$ along the canonical surjection $kQ \to kQ' \to A$
taking all arrows $\rho_r$ to $0$. For example, we can have $W'=0$
and $\tilde{W}'=0$.
% Let $k_{qu}(Q)$ denote the graded $k$-quiver
% with the same objects and whose morphism space $(k_{qu}(Q))(x,y)$
% is the free $k$-module on the set $Q(x,y)$ for all objects $x$ and $y$.
% By the dg algebra $\Gamma_3(Q,W)$, we mean the Ginzburg dg algebra
% $\Gamma_3(k_{qu}(Q),W)$.

\begin{theorem}
\label{thm:3-CY-completion-of-2-dim-alg}
The deformed $3$-Calabi-Yau completion $\Pi_3(A,c)$ is
quasi-isomorphic to the Ginz\-burg dg algebra $\Gamma_3(Q,W+\tilde{W}')$.
\end{theorem}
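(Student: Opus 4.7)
The plan is to construct an explicit cofibrant dg replacement $p : \ca \to A$ whose underlying graded tensor category is concentrated in degrees $-1$ and $0$, then apply Proposition~\ref{prop:3-CY-completion-dg-alg-dim-2} together with the Morita invariance of deformed Calabi-Yau completions.

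First I would build $\ca$ as follows. Let $\cR = k^{Q_0}$, view the arrow space $V^0 = kQ'_1$ as an $\cR^e$-bimodule, and let $V^{-1} = kR$ be placed in cohomological degree $-1$. Set
\[
\ca = (T_\cR(V^0 \oplus V^{-1}), d),
\]
where $d$ vanishes on $V^0$ and sends each generator $r \in V^{-1}$ to the corresponding element of $T_\cR(V^0) = kQ'$. The filtration $F_0 = 0 \subset F_1 = V^0 \subset F_2 = V^0 \oplus V^{-1}$ satisfies the hypotheses of Section~\ref{ss:htpfp-dg-categories}, so $\ca$ is homotopically finitely presented, and in particular homologically smooth. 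There is a natural augmentation $p : \ca \to A$ that is the identity on $\cR$ and on $V^0$ and sends $V^{-1}$ to zero.

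The main obstacle is to verify that $p$ is a quasi-isomorphism. By definition $H^0(\ca) = kQ'/\langle R\rangle$, so I need (i) that $R$ generates the ideal $I$ and (ii) that $H^{-i}(\ca) = 0$ for $i \geq 1$. Assertion (i) is graded Nakayama applied to the bimodule $I/(IJ+JI)$, using that by definition the elements of $R$ project to a $k$-basis there. For (ii), I would apply the small bimodule resolution of Proposition~\ref{prop:bimodule-resolution-of-A} to $\ca$ and tensor over $\ca^e$ with $A$, obtaining a complex
\[
0 \to A \otimes_\cR V^{-1} \otimes_\cR A \to A \otimes_\cR V^0 \otimes_\cR A \to A \otimes_\cR A \to A \to 0
\]
of projective $A$-bimodules. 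The key computation is that applying $DS_j \otimes_A - \otimes_A S_i$ to this complex recovers the minimal projective resolution of $S_i$ through degree $2$: the term in degree $2$ is indexed precisely by $R$, by its defining property as a basis of $\bigoplus_{i,j}\Tor^A_2(S_j, DS_i)$. Since $\gldim A \leq 2$, there are no classes in degree $\geq 3$, so the complex is exact, and a standard filtration argument transfers exactness back to $\ca$ itself.

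With the quasi-isomorphism $p$ in hand, the rest is assembly. Lift $W' \in HC_0(A)$ to a class $\tilde W' \in HC_0(kQ)$ along the surjection $kQ \twoheadrightarrow kQ' \twoheadrightarrow A$ that kills the arrows $\rho_r$. Via the inclusion $kQ' \hookrightarrow \ca$ of the degree-$0$ part, the class $\tilde W'$ determines an element $W'_\ca \in HC_0(\ca)$ mapping to $W'$, and $c_\ca := B(W'_\ca) \in HH_1(\ca)$ is then a preimage of $c = B(W')$. Theorem~\ref{thm:localization-defo-CY-completion}(a) applied to the (derived) Morita equivalence $p$ yields $\Pi_3(A, c) \simeq \Pi_3(\ca, c_\ca)$. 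Now Proposition~\ref{prop:3-CY-completion-dg-alg-dim-2} applied to $\ca$ identifies $\Pi_3(\ca, c_\ca)$ with the Ginzburg dg algebra $\Gamma_3(V^0 \oplus (V^{-1})^\vee, W + \tilde W')$. By construction $V^0 \oplus (V^{-1})^\vee$ is the arrow space of $Q$, and the canonical potential $W = \sum_j v_j^* \, d(v_j) = \sum_r \rho_r \cdot r$ coincides, up to cyclic permutation, with the potential $\sum_r r\rho_r$ of the statement. Combining the two equivalences yields the desired quasi-isomorphism $\Pi_3(A, c) \simeq \Gamma_3(Q, W + \tilde W')$.
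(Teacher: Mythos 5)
Your proposal is correct in substance but reaches the key lemma by a genuinely different route than the paper. The paper never writes down the candidate resolution and checks it; instead it runs the standard inductive construction of a cofibrant model $\cF=(T_\cR(V),d)\to\ca$ (adding generators $V^{-n-1}$ to kill $H^{-n}$ of each stage), so that the quasi-isomorphism holds \emph{by construction}, and then identifies $V^{-n}(i,j)\cong \Tor^{\cF}_{1+n}(S_i,DS_j)\cong\Tor^{\ca}_{1+n}(S_i,DS_j)$ a posteriori, using minimality of the differential ($d(V)\subseteq (V)^2$) and the bimodule resolution of proposition~\ref{prop:bimodule-resolution-of-A}; the hypothesis $\gldim A\leq 2$ then forces $V^{-n}=0$ for $n\geq 2$. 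You instead guess the two-term model $(T_\cR(V^0\oplus V^{-1}),d)$ outright and verify directly that the augmentation to $A$ is a quasi-isomorphism, pushing all the work into the exactness of the length-two projective bimodule complex of $A$. What the paper's approach buys is that it never has to prove exactness of a proposed complex — the only nontrivial input is the Tor computation; what yours buys is concreteness and the avoidance of the infinite inductive construction. Both routes then finish identically via proposition~\ref{prop:3-CY-completion-dg-alg-dim-2} and the Morita/quasi-isomorphism invariance of deformed completions.

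One step of yours deserves a warning. The "standard filtration argument transferring exactness back to $\ca$" is not purely formal: exactness of your complex $C$ says precisely that the multiplication $A\lten_{\ca}A\to A$ is invertible, and this alone does \emph{not} imply $\ca\to A$ is a quasi-isomorphism (dg localizations are counterexamples). What rescues the implication is that $\ca$ is concentrated in degrees $\leq 0$ and that step (i) already gives $H^0(\ca)=A$: if $F$ denotes the fibre of $\ca\to A$ and $H^{-N}(F)$ were its top nonvanishing cohomology, right-exactness of $\ten$ in top degree would give $H^{-N}(A\lten_\ca F)\cong H^{-N}(F)\neq 0$, contradicting $A\lten_\ca F\simeq 0$. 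You should make this connectivity argument explicit; with it in place, your proof is complete at the same level of rigour as the paper's.
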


A very similar result was independently obtained by Ginzburg
\cite{Ginzburg08} in a slightly different setting.

\begin{proof} For each vertex $i$ of $A$ let $P_i$ be the indecomposable
projective $e_i A$. Let $\ca$ be the full subcategory of the module
category formed by the $P_i$. By induction, one constructs a graded $\cR$-bimodule
$V$ and a differential $d$ on $T_\cR(V)$ such that
\begin{itemize}
\item[1)] $V^n$ vanishes in degrees $n\geq 1$, $V^0$ is free with basis
$Q'$ and $V^{-1}$ is free with basis $R$;
\item[2)] the differential $d$ sends the basis element $r\in R$ of $V^{-1}$
to the element $r$ of $T_\cR(V^0)$;
\item[3)] for all $n\geq 1$, the differential $d$ maps $V^{-n-1}$ to
$T_n$ and induces an isomorphism from $V^{-n-1}$ onto $H^{-n}(T_n)$,
% the quotient of $H^{-n}(T_n)$ by
% its intersection with the ideal generated by $V^0$ in $H^*(T_n)$,
where $T_n$ denotes the dg category $T_\cR(V^0\oplus \cdots \oplus V^{-n})$.
\end{itemize}
Notice that a) the image $d(V)$ lies in the {\em square} of the ideal
generated by $V$ in $T_\cR(V)$ and that b) we have a canonical quasi-isomorphism
between $\cF=(T_\cR(V),d)$ and $\ca$.
The point a) implies that
we have isomorphisms
\[
V^{-n}(i,j) \cong \Tor^\cF_{1+n}(S_i,DS_j)
\]
for all $i$, $j$ and $n$ (thanks to remark~\ref{rk:cofibrant-resolutions-of-A},
we can use the bimodule resolution of part~b) of
proposition~\ref{prop:bimodule-resolution-of-A}).
The point b) implies that we have isomorphisms
\[
\Tor^\cF_{1+n}(S_i,DS_j) \cong \Tor^\ca_{1+n}(S_i,DS_j).
\]
Thus, we have $V^n=0$ for all $n$ different from $0$ and $-1$.
Now we can apply proposition~\ref{prop:3-CY-completion-dg-alg-dim-2}
to conclude.
\end{proof}

\subsection{Application to cluster-tilted algebras}
\label{ss:Application-to-cluster-tilted-algebras} Let $k$ be an algebraically
closed field.
If $A$ is a finite-dimensional $k$-algebra of finite global dimension,
its {\em generalized cluster-category} $\cc_A$ is defined as the full triangulated
subcategory of the triangle quotient
\[
\cd^b(A\oplus (DA)[-3])/\per(A \oplus (DA)[-3])
\]
generated by the image of the free module $A$, \cf \cite{Keller05}
and \cite{Amiot08a}. Here, the dg algebra $A\oplus (DA)[-3]$ is the
trivial extension of $A$ by the dg bimodule $(DA)[-3]$, where
$D=\Hom_k(?,k)$. In general, the category $\cc_A$ has infinite-dimensional
morphism spaces. As shown in \cite{Keller05}, if $A$ is the path
algebra of a quiver $Q$ without oriented cycles, then $\cc_A$ is
triangle equivalent to the cluster category $\cc_{Q}$ as defined in
\cite{BuanMarshReinekeReitenTodorov06}, \cf also
\cite{CalderoChapotonSchiffler06} for the case where $Q$ is Dynkin
of type $A$.

The {\em generalized cluster category $\cc_{(Q,W)}$} of a finite quiver $Q$ with
potential $W$ is defined as the triangle quotient
\[
\per(\Gamma_3(Q,W))/\cd^b(\Gamma_3(Q,W)\ko
\]
\cf \cite{Amiot08a}. In general, it has infinite-dimensional morphism spaces.
If $Q$ does not have oriented cycles (and so $W=0$), then $\cc_{(Q,0)}$
is equivalent to the cluster category $\cc_Q$, \cf \cite{Amiot08a}.
For arbitrary $(Q,W)$, the endomorphism algebra of the
image of the free module $\Gamma_3(Q,W)$ in $\cc_{(Q,W)}$
is isomorphic to the {\em Jacobian algebra} $H^0(\Gamma_3(Q,W))$.

Recall \cite{Keller07a} that a {\em tilting module} over an algebra $B$
is a $B$-module $T$ such that the total derived functor
of the tensor product by $T$ over the endomorphism
algebra $\End_B(T)$ is an equivalence
\[
\cd(\End_B(T)) \iso \cd(B).
\]
The endomorphism algebra $A$ of a tilting module $T$
over a hereditary algebra $B$ is of global dimension at most $2$.
A module $M$ is {\em basic} if each indecomposable module occurs
with multiplicity at most $1$ as a direct factor of $M$.
If $T$ is a basic tilting module over the path algebra $B=kQ''$ of a finite
quiver without oriented cycles, the endomorphism algebra $\tilde{A}$ of the
image of $T$ in $\cc_{Q''}$ is called the {\em cluster-tilted algebra}
associated with $T$, \cf \cite{BuanMarshReiten04}.

\begin{theorem} Let $A=kQ'/I$ be a $k$-algebra of global dimension at most $2$
as in section~\ref{ss:CY-completion-alg-dim-2} and define $(Q,W)$ as there.
Let $\Gamma=\Gamma_3(Q,W)$.
\begin{itemize}
\item[a)]
The category $\cc_{(Q,W)}$ is canonically triangle equivalent
to the cluster category $\cc_A$. The equivalence takes
$\Gamma$ to the image $\pi(A)$ of $A$ in $\cc_A$ and thus
induces an isomorphism from the Jacobian algebra $\cp(Q,W)$ onto
the endomorphism algebra $\tilde{A}$ of the image of $A$ in $\cc_A$.
\item[b)]
If $T$ is a basic tilting module over $kQ''$ for a quiver
without oriented cycles $Q''$ and $A$ is the
endomorphism algebra of $T$, then
$\cc_{(Q,W)}$ is triangle equivalent to $\cc_{Q''}$ by
an equivalence which takes $\Gamma$ to the image of $T$ in
$\cc_{Q''}$. Thus, the endomorphism algebra $\tilde{A}$ of $T$ in
$\cc_{Q''}$ is isomorphic to the Jacobian algebra $H^0(\Gamma)$.
\end{itemize}
\end{theorem}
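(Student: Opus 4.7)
The strategy is to reduce both parts to Theorem \ref{thm:3-CY-completion-of-2-dim-alg} combined with the identification, due to Amiot \cite{Amiot08a}, of Keller's cluster category with the $\per/\cd^b$-quotient of a $3$-Calabi-Yau dg algebra.

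For part a), I would take $W' = 0$ (so $c = 0$) in Theorem \ref{thm:3-CY-completion-of-2-dim-alg}, which produces a quasi-isomorphism of dg algebras
\[
\Pi_3(A) \iso \Gamma_3(Q,W) = \Gamma
\]
compatible with the canonical inclusions from $A$. This induces a triangle equivalence
\[
\per(\Pi_3(A))/\cd^b(\Pi_3(A)) \iso \per(\Gamma)/\cd^b(\Gamma) = \cc_{(Q,W)}
\]
sending the image of the free $\Pi_3(A)$-module to $\Gamma$. It then remains to identify the left-hand side with $\cc_A$ in a way that sends the image of the free $\Pi_3(A)$-module to $\pi(A)$. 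This is precisely Amiot's generalization of Keller's construction: for $A$ finite-dimensional of global dimension at most $2$, the Verdier quotient $\per(\Pi_3(A))/\cd^b(\Pi_3(A))$ is canonically equivalent, as a triangulated category equipped with a distinguished object, to the subcategory of $\cd^b(A \oplus (DA)[-3])/\per(A \oplus (DA)[-3])$ generated by the image of the free $A$-module. Composing the two equivalences yields $\cc_{(Q,W)} \iso \cc_A$ sending $\Gamma \mapsto \pi(A)$, and taking $H^0$ of endomorphism rings gives the required isomorphism $\cp(Q,W) = H^0(\Gamma) \iso \tilde{A}$.

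For part b), set $B = kQ''$ and use Happel's theorem to obtain a derived equivalence $\Phi : \cd^b(\mod A) \iso \cd^b(\mod B)$ with $\Phi(A) \simeq T$. Since $B$ is hereditary, Keller's theorem \cite{Keller05} identifies the Amiot cluster category of $B$ (and hence, by part a) applied to $B$ itself, the category $\cc_{(Q,W)}$) with the classical cluster category $\cc_{Q''} = \cd^b(B)/(\tau^{-1}\Sigma)^{\ZZ}$. Because the cluster shift $\tau^{-1}\Sigma$ is defined intrinsically in terms of the Serre functor and suspension, $\Phi$ extends canonically to a triangle equivalence $\cc_A \iso \cc_B = \cc_{Q''}$ sending $\pi(A)$ to the image of $T$. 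Chaining with part a) gives the desired equivalence $\cc_{(Q,W)} \iso \cc_{Q''}$ taking $\Gamma$ to the image of $T$, and the isomorphism of endomorphism algebras follows.

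The principal obstacle is the canonical identification in the first paragraph, namely that Amiot's construction built from $\Pi_3(A)$ agrees with Keller's construction built from the trivial extension $A \oplus (DA)[-3]$, together with the matching of distinguished generators. This rests on comparing the two dg algebras through their common ``fundamental domain'' — the subcategory generated by $A$ and its Serre-translate — and is exactly the step where finite global dimension at most $2$ is essential: it is what guarantees that $\Pi_3(A)$ has homology concentrated in nonpositive degrees with $H^0 = A$, so that the standard $t$-structure on $\per(\Pi_3(A))$ matches the one on $\cd^b(A\oplus (DA)[-3])$ under the relevant Koszul-duality-type equivalence. Once this identification is invoked, the rest of the argument is bookkeeping: tracking the distinguished object through a short chain of triangle equivalences.
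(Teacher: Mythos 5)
Your proposal is correct and follows essentially the same route as the paper: part a) combines theorem~\ref{thm:3-CY-completion-of-2-dim-alg} (with $W'=0$, $c=0$) with the identification of $\per(\Pi_3(A))/\cd_{fd}(\Pi_3(A))$ with $\cc_A$ taken from \cite{Keller05} and \cite{Amiot08a}, and the endomorphism computation via Amiot's theorem that $\End_{\cc_{(Q,W)}}(\Gamma)\cong H^0(\Gamma)$; part b) reduces to a) via the derived invariance of the generalized cluster category. The "principal obstacle" you flag is exactly the step the paper also discharges by citation rather than proof.
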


The quiver of $\tilde{A}$ in part b) was first described by
Assem-Br\"ustle-Schiffler \cite{AssemBruestleSchiffler08}.
The fact that cluster-tilted algebras are Jacobian algebras
was independently proved by Buan-Iyama-Reiten-Smith
\cite{BuanIyamaReitenSmith08} using an entirely different method.

\begin{proof} a)
By theorem~\ref{thm:3-CY-completion-of-2-dim-alg}, the
$3$-Calabi-Yau completion $\Pi=\Pi_3(A)$ is quasi-isomor\-phic to $\Gamma=\Gamma_3(Q,W)$.
Thus we have an equivalence of triangulated categories
\[
\cc_{(Q,W)} \iso \per(\Pi)/\cd_{fd}(\Pi)
\]
taking the free module $\Gamma$ to $\Pi$. Moreover,
we have an equivalence of triangulated categories
\[
\per(\Pi)/\cd_{fd}(\Pi) \iso \cc_A
\]
taking the free module $\Pi$ to the image $\pi(A)$ of the free module $A$,
\cf the proof of theorem 7.1 in \cite{Keller05} or Lemmas 4.13 to 4.15
in \cite{Amiot08a}. The claim follows because $H^0(\Gamma)$ is isomorphic
to the endomorphism algebra of $\Gamma$ in $\cc_{(Q,W)}$ by theorem~3.6
of \cite{Amiot08a}.

b) If $A$ is the endomorphism algebra of $T$, then $A$ is derived
equivalent to the path algebra $kQ''$ and therefore $\cc_A$ is
equivalent to $\cc_{Q''}$. The claim now follows from part~a).
\end{proof}

\section{Particular cases of localization and Morita equivalence}
\label{s:particular-cases-localization-Morita-equivalence}

\subsection{Deleting a vertex is localization} \label{ss:deleting-a-vertex-is-localization}
Let $k$ be a field and $Q$ a finite quiver (possibly with oriented cycles).
Let $A$ be the path algebra $kQ$. Notice that $A$ may be of infinite dimension.
Let $i$ be a vertex of $Q$ and $e_i$ the associated idempotent.
Let $P_i=e_i A$ be the associated projective indecomposable. Let $\cn\subset \cd(A)$
be the localizing subcategory generated by $P_i$. Let $B=A/Ae_i A$. Notice
that $B$ is the path algebra of the quiver $Q'$ obtained from
$Q$ by deleting the vertex $i$ and all arrows
starting or ending at this vertex.

\begin{lemma} \label{lemma:deleting-a-vertex} The functor
\[
?\lten_A B: \cd(A) \to \cd(B)
\]
induces an equivalence from $\cd(A)/\cn$ onto $\cd(B)$. Thus, the
morphism $A \to B$ is a localization of dg categories (\cf section~\ref{ss:Morita-equivariance-localization}).
\end{lemma}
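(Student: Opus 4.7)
The plan is to verify the definition of a localization functor from section~\ref{ss:Morita-equivariance-localization}: namely, that induction along $A\to B$ induces an equivalence $\cd(A)/\cn \iso \cd(B)$. This amounts to showing (i) the restriction functor $\cd(B)\to\cd(A)$ is fully faithful, and (ii) the kernel of $?\lten_A B$ coincides with $\cn$.

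The main ingredient is an exact triangle
\[
Ae_i \lten_{e_iAe_i} e_iA \xrightarrow{\mu} A \longrightarrow B \longrightarrow \Sigma
\]
in $\cd(A^e)$, where $\mu$ is multiplication. To establish it, I would observe that $Ae_i$ is free as a right $e_iAe_i$-module and $e_iA$ is free as a left $e_iAe_i$-module, with bases the paths starting (resp.\ ending) at $i$ whose only visit to $i$ occurs at the prescribed endpoint. This guarantees that the derived tensor product agrees with the underived one, and that $\mu$ is a bijection from the natural $k$-basis \emph{(prime-starting segment) $\otimes$ (closed path at $i$) $\otimes$ (prime-ending segment)} onto the span $Ae_iA$ of paths through $i$, whose cokernel in $A$ is $B$.

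Tensoring the triangle on the left with an arbitrary right $A$-module $M$ gives a natural triangle
\[
Me_i \lten_{e_iAe_i} e_iA \longrightarrow M \longrightarrow M\lten_A B \longrightarrow \Sigma.
\]
For $M=B$, the leftmost term vanishes because $Be_i=0$, which gives $B\lten_A B \iso B$ and establishes (i). For (ii), the inclusion $\cn\subseteq \ker(?\lten_A B)$ is immediate since $P_i\lten_A B = e_iB=0$. Conversely, if $M\lten_A B=0$ the triangle yields $M \iso Me_i \lten_{e_iAe_i} e_iA$; the right-hand side lies in the essential image of the coproduct-preserving triangle functor $?\lten_{e_iAe_i} e_iA\colon \cd(e_iAe_i) \to \cd(A)$, which, as a localizing subcategory, is generated by its value on the free module $e_iAe_i$, namely $e_iA = P_i$, and therefore equals $\cn$.

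The principal technical obstacle is the freeness of $Ae_i$ and $e_iA$ over $e_iAe_i$, since this is precisely what upgrades the short exact sequence $0 \to Ae_iA \to A \to B \to 0$ to a triangle in the \emph{derived} bimodule category. It reduces to the combinatorial claim that every path through the vertex $i$ admits a unique normal form as a prime-ending segment followed by a closed path at $i$ followed by a prime-starting segment. This holds uniformly for any finite $Q$, including quivers with oriented cycles through $i$, for which $e_iAe_i$ is itself a free associative algebra on the prime closed paths at $i$.
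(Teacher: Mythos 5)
Your argument is correct, but it takes a genuinely different route from the paper's. The paper argues object by object: since $\cn$ is generated by the compact object $P_i$, every $X$ sits in a localization triangle $X_\cn \to X \to X^{\cn^\perp}$, and the proof computes this triangle explicitly for each projective $P_j$ — the set $\cm_j$ of minimal paths from $i$ to $j$ plays exactly the role of your prime segments — identifies $P_j^{\cn^\perp}$ as the cokernel of $\bigoplus_{\cm_j}P_i \to P_j$, and then computes the graded morphism spaces between these objects to recognize $e_l(A/Ae_iA)e_j$, obtaining the equivalence first on compact objects and then on all of $\cd(A)/\cn$ by coproduct preservation. You package the same combinatorial input (unique factorization of a path through $i$ at its first and last visits to $i$) into a single bimodule-level statement: the freeness of $Ae_i$ and $e_iA$ over $e_iAe_i$ shows that $Ae_iA$ is a stratifying ideal, i.e. that multiplication $Ae_i\lten_{e_iAe_i}e_iA \to Ae_iA$ is invertible, yielding the triangle $Ae_i\lten_{e_iAe_i}e_iA \to A \to B$ in $\cd(A^e)$; tensoring with $B$ gives full faithfulness of restriction, and tensoring with an arbitrary $M$ identifies the kernel with $\cn$. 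This is the standard recollement argument and buys more than the lemma: it exhibits the full recollement of $\cd(A)$ by $\cd(B)$ and $\cd(e_iAe_i)$. Two minor points, neither a gap: the step from $B\lten_AB\iso B$ to full faithfulness of restriction requires the routine d\'evissage that the counit is an isomorphism on the compact generator $B$ and that both sides are coproduct-preserving triangle functors; and the essential image of $?\lten_{e_iAe_i}e_iA$ is not a priori a localizing subcategory — the clean formulation of what you use is that a coproduct-preserving triangle functor sends the localizing subcategory generated by $e_iAe_i$, which is all of $\cd(e_iAe_i)$, into the localizing subcategory generated by its image $e_iA=P_i$, namely $\cn$.
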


\begin{proof} Since $\cn$ is generated by a compact object, we
know (\cf for example \cite{Neeman99}) that for each object $X$
of $\cd(A)$, there is a triangle, unique up to unique isomorphism,
\begin{equation} \label{eq:localization-triangle}
X_\cn \to X \to X^{\cn^\perp} \to X_\cn
\end{equation}
with $X_\cn$ in $\cn$ and $X^{\cn^\perp}$ in the right
orthogonal subcategory $\cn^\perp$. Moreover, the projection
functor $\cd(A)\to \cd(A)/\cn$ induces an equivalence from $\cn^\perp$
onto $\cd(A)/\cn$. Let us compute the triangle~\ref{eq:localization-triangle}
for $X=P_j$, where $P_j=e_j A$ is the projective associated with a
vertex $j$ of $Q$. If we have $j=i$, the morphism $X_\cn \to X$ is the identity
of $P_i$. If we have $j\neq i$, let $\cm_j$ be the set of minimal elements
of the set of paths $p$ from $i$ to $j$, where we have $p\leq p'$
if $p'=pu$ for a path $u$ from $i$ to $i$. Then each morphism $P_i \to P_j$
uniquely factors through the morphism
\[
\bigoplus_{\cm_j} P_i \to P_j
\]
whose component associated with $p\in \cm_j$ is the left multiplication
by $p$. Moreover, this morphism is injective. It follows easily that it
induces a bijection
\[
\Hom_{\cd(A)}(\Sigma^m P_i, \bigoplus_{\cm_j} P_i) \to
\Hom_{\cd(A)}(\Sigma^m P_i, P_j)
\]
for each $m\in\Z$ and this implies that it induces a bijection
\[
\Hom_{\cd(A)}(N, \bigoplus_{\cm_j} P_i) \to
\Hom_{\cd(A)}(N, P_j)
\]
for each $N\in\cn$. It follows that the morphism
\[
\bigoplus_{\cm_j} P_i \to P_j
\]
is the universal morphism $X_\cn \to X$ for $X=P_j$. Therefore,
the object $P_j^{\cn^\perp}$ is the cokernel of
\[
\bigoplus_{\cm_j} P_i \to P_j.
\]
Now it is easy to check that for all vertices $j$ and
$l$, the morphism space
\[
\Hom_{\cd(A)}(P_j^{\cn^\perp}, \Sigma^m P_l^{\cn^\perp})
\]
vanishes for $m\neq 0$ and is canonically isomorphic to $e_l(A/Ae_i A) e_j$ for
$m=0$. This shows that the functor $?\lten_A (A/Ae_i A): \cd(A) \to \cd(A/Ae_i A)$
induces an equivalence from the subcategory of compact objects of
$\cd(A)/\cn$ onto the perfect derived category of $\cd(B)=\cd(A/Ae_i A)$.
Since this functor commutes with arbitrary coproducts, it does indeed
induce an equivalence from $\cd(A)/\cn$ onto $\cd(B)$.
\end{proof}

Recall that $Q$ is a finite quiver, possibly with oriented
cycles, $k$ is a field and $A$ is the path algebra $kQ$.
The quiver $Q'$ is obtained from $Q$ by deleting the vertex $i$
and all arrows starting or ending at $i$ and $B=A/Ae_i A$.
Now let $W$ be a potential on $Q$, \ie an element of $HC_0(A)$
and let $W'$ be the image of $W$ in $HC_0(B)$.
\begin{corollary} \label{cor:deleting-a-vertex-is-localization}
The canonical functor
\[
\Gamma_3(Q,W) \to \Gamma_3(Q',W')
\]
is a localization.
\end{corollary}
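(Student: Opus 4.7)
The plan is to reduce the statement to Theorem~\ref{thm:localization-defo-CY-completion}~a), using the identification of Ginzburg dg algebras with deformed $3$-Calabi-Yau completions (Theorem~\ref{thm:CY-completion-is-Ginzburg-algebra}) and the fact that deleting a vertex is a dg localization (Lemma~\ref{lemma:deleting-a-vertex}).

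First, I would set up the data as follows. Put $c=B(W)\in HH_1(A)$ and $c'=B(W')\in HH_1(B)$, where $B\colon HC_0\to HH_1$ is Connes' connecting map of section~\ref{ss:reminder-Hochschild-homology}. By Theorem~\ref{thm:CY-completion-is-Ginzburg-algebra} applied twice, there are canonical quasi-isomorphisms $\Pi_3(A,c)\iso \Gamma_3(Q,W)$ and $\Pi_3(B,c')\iso \Gamma_3(Q',W')$ identifying the canonical inclusions $A\to \Pi_3(A,c)$ and $B\to \Pi_3(B,c')$ with the canonical inclusions into the Ginzburg dg algebras.

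Next, I would invoke Lemma~\ref{lemma:deleting-a-vertex}, which says that the projection $F\colon A\to B=A/Ae_iA$ is a localization of dg categories. Since $B$ is concentrated in degree $0$ and Connes' map is natural with respect to the morphisms of Hochschild and cyclic homology induced by $F$ (\cite{Keller98}), the class $c'=B(W')$ coincides with the image of $c=B(W)$ under the map $HH_1(A)\to HH_1(B)$ induced by $F$. Thus the hypotheses of Theorem~\ref{thm:localization-defo-CY-completion}~a) are satisfied, and it yields a canonical localization functor
\[
G\colon \Pi_3(A,c)\to \Pi_3(B,c')
\]
sitting in a commutative square with $F$ and the canonical inclusions. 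Composing with the identifications of the first paragraph, $G$ becomes the canonical functor $\Gamma_3(Q,W)\to \Gamma_3(Q',W')$, which is therefore a localization.

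The only genuinely verifiable point—and the one I would treat most carefully—is the compatibility between the canonical functor of the corollary (which on generators sends arrows of $Q$ not incident to $i$ to themselves, duals $\alpha_j^*$ similarly, loops $t_l$ similarly, and all generators involving the vertex $i$ to zero) and the functor $G$ produced by Theorem~\ref{thm:localization-defo-CY-completion}~a), whose construction goes through a chosen lift of the quasi-isomorphism $\theta_A\to F^*\theta_B$ of inverse dualizing bimodules. Since both functors are determined up to quasi-isomorphism by their restriction to $A$ and by the induced morphism on the deformed $\theta$-components, and since Theorem~\ref{thm:CY-completion-is-Ginzburg-algebra} identifies these $\theta$-components with the explicit generators of the Ginzburg algebra, the two agree up to quasi-isomorphism. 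No further computation is needed.
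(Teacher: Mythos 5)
Your proposal is correct and follows the same route as the paper: identify the Ginzburg algebras with deformed $3$-Calabi-Yau completions via theorem~\ref{thm:CY-completion-is-Ginzburg-algebra}, note via naturality of Connes' map that $c'=B(W')$ is the image of $c=B(W)$ under the map induced by the localization $A\to A/Ae_iA$ of lemma~\ref{lemma:deleting-a-vertex}, and apply theorem~\ref{thm:localization-defo-CY-completion}. Your extra paragraph checking that the functor produced by the localization theorem agrees (up to quasi-isomorphism) with the canonical functor on generators is a point the paper leaves implicit, and is a welcome addition.
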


\begin{proof}
By the functoriality
of Connes' map $B$, the class $c'=B(W')$ is the image of $c=B(W)$
under the map $HH_1(A,A) \to HH_1(B,B)$ induced by $A\to B$.
By the localization theorem~\ref{thm:localization-defo-CY-completion}
and the above lemma~\ref{lemma:deleting-a-vertex},
we have an induced localization functor
\[
\Pi_3(A,c) \to \Pi_3(B,c')
\]
and by theorem~\ref{thm:CY-completion-is-Ginzburg-algebra}, this
yields a localization functor between the Ginzburg dg algebras.

\end{proof}

Let us put $\Gamma=\Gamma_3(Q,W)$ and $\Gamma'=\Gamma_3(Q',W')$.
Notice that in zeroth homology, the induced
morphism between the Jacobian algebras is the natural
quotient map
\[
\cp(Q,W) \to \cp(Q',W').
\]
Let us compare the generalized cluster categories
\[
\cc_{(Q,W)} = \per(\Gamma)/\cd_{fd}(\Gamma)
\]
and $\cc_{(Q',W')}$ under the assumption that these categories
have finite-dimensional morphism spaces. We refer to \cite{Amiot08a}
for a thorough analysis of this situation. Let
$\tilde{P}_i=e_i\Gamma$ and let $\ol{P}_i$ be the
image of $\tilde{P}_i$ under the projection functor
$\pi: \per(\Gamma) \to \cc$.

\begin{theorem} \label{thm:localization-and-reduction}
The triangulated category
$\cc_{(Q',W')}$ is triangle equivalent to
the Calabi-Yau reduction in the sense of
Iyama-Yoshino (section~4 of \cite{IyamaYoshino08})
of $\cc_{(Q,W)}$ at $\ol{P}_i$.
\end{theorem}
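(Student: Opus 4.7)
The plan is to use the localization $G\colon \Gamma \to \Gamma'$ provided by Corollary~\ref{cor:deleting-a-vertex-is-localization} to produce a triangle functor $\bar G\colon \cc_{(Q,W)} \to \cc_{(Q',W')}$ and then identify it with the Iyama-Yoshino reduction at $\ol{P}_i$. By Theorem~\ref{thm:localization-defo-CY-completion}(b), the kernel of $G_*\colon \cd(\Gamma) \to \cd(\Gamma')$ is the localizing subcategory $\cn$ generated by $\tilde{P}_i = e_i\Gamma$. The first step is to check that $G_*$ restricts to localization functors $\per(\Gamma) \to \per(\Gamma')$ (with kernel the thick subcategory generated by $\tilde{P}_i$, up to direct summands) and $\cd_{fd}(\Gamma) \to \cd_{fd}(\Gamma')$; the induced functor on Verdier quotients is then a triangle functor $\bar G\colon \cc_{(Q,W)} \to \cc_{(Q',W')}$ that sends $\ol{P}_i$ to $0$.

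Next, I would factor $\bar G$ through the Iyama-Yoshino reduction. Write $\cu\subset\cc_{(Q,W)}$ for the full subcategory of objects $X$ with $\Hom(\ol{P}_i, X[1])=0$; by the $2$-Calabi-Yau property of $\cc_{(Q,W)}$ this condition is self-dual, and the Iyama-Yoshino reduction is by definition $\cu/[\ol{P}_i]$. Since $\bar G(\ol{P}_i)=0$, every morphism factoring through an object of $\add\ol{P}_i$ is killed by $\bar G$, so its restriction to $\cu$ descends to a triangle functor $F\colon \cu/[\ol{P}_i] \to \cc_{(Q',W')}$.

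The final step is to prove that $F$ is an equivalence. For essential surjectivity, I would lift an object $Y'\in\cc_{(Q',W')}$ to a perfect $\Gamma'$-module, then (up to direct summands) to $Y\in\per(\Gamma)$ using the first step, and adjust the image of $Y$ in $\cc_{(Q,W)}$ by triangles against $\tilde{P}_i$ to place it in $\cu$. For full faithfulness, I would compute morphism spaces on both sides using Amiot's fundamental-domain description of the cluster category inside the perfect derived category, and compare: the projection formula provided by Theorem~\ref{thm:localization-defo-CY-completion} identifies the two Hom spaces up to contributions running through the idempotent $e_i$.

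The main obstacle I foresee is this last comparison: one has to verify that the excess between $\Hom_{\cc_{(Q,W)}}(X,Y)$ and $\Hom_{\cc_{(Q',W')}}(\bar GX,\bar GY)$, for $X,Y\in\cu$, is precisely the ideal $[\ol{P}_i](X,Y)$. This requires matching Amiot's canonical $t$-structure on $\per(\Gamma)$ with its image in $\per(\Gamma')$ under $G_*$ and then arguing that any morphism in $\cc_{(Q,W)}$ killed by $\bar G$ must be representable in $\per(\Gamma)$ by a chain of maps that uses the idempotent $e_i$, hence factors through $\tilde{P}_i$ and so through $\ol{P}_i$ after projection. Once this is established, Iyama-Yoshino's axioms force $F$ to be a triangle equivalence between $\cu/[\ol{P}_i]$ and $\cc_{(Q',W')}$.
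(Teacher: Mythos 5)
Your opening move runs into a genuine obstruction. You propose to check that $G_*$ restricts to a functor $\cd_{fd}(\Gamma)\to\cd_{fd}(\Gamma')$, so that it descends to a triangle functor $\bar G$ between the cluster categories. But there is no reason for the localization $G_*=?\lten_\Gamma\Gamma'$ to preserve $\cd_{fd}$: it can take an object with finite total homology to one with infinite total homology. The paper's proof is organized precisely to sidestep this. It never constructs a functor on all of $\cc_{(Q,W)}$; it defines $\ol F$ only on $\cz$ (your $\cu$), via the $k$-linear identification with Amiot's fundamental domain $\cF\subset\per(\Gamma)$, and the crux — the Claim in the middle of the proof — is exactly that for the specific truncation $\tau_{>0}X'$ arising there, $L\tau_{>0}X'$ has finite total homology. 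That argument hinges on first showing $\tau_{>0}X'$ is right orthogonal to all $\Sigma^m\tilde P_i$, which holds because $X$, $Y$, $Z$ lie in $\cz$; without this orthogonality nothing controls the homology of the localized object. So the finite-dimensionality you would need in order for $\bar G$ to exist is the kind of statement that only becomes available after restricting to $\cz$, not on all of $\cc_{(Q,W)}$.

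A secondary gap: even granting the existence of $\bar G$, the induced map on $\cu/[\ol P_i]$ being a \emph{triangle} functor for the Iyama--Yoshino triangulated structure is not formal. The Iyama--Yoshino triangles are built from right $\add(\ol P_i)$-approximations, not as images of distinguished triangles of $\cc_{(Q,W)}$, and the compatibility of $\ol F$ with this structure is what occupies the entire second half of the paper's proof: constructing the natural isomorphism $\phi:\Omega\ol F\iso\ol F\Omega_r$ and verifying that $(\ol F,\phi)$ carries standard Iyama--Yoshino triangles to triangles of $\cc_{(Q',W')}$, so that the Keller--Reiten recognition lemma can then finish the argument. Your sketch treats that step as automatic; in the paper it is the main content.
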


\begin{proof}  Let us put $\cc=\cc_{(Q,W)}$ and $\cc'=\cc_{(Q',W')}$.
Let $\cz$ be the full subcategory
of $\cc$ formed by the objects $M$ such that $\Ext^1(\ol{P}_i,M)$
vanishes. By definition, the Calabi-Yau reduction at $\ol{P}_i$ is the
quotient $\cz/(\ol{P}_i)$ of $\cz$ by the ideal of morphisms
factoring through a finite direct sum of copies of $\ol{P}_i$. To construct a functor from $\cz$ to
$\cc'$, we consider the fundamental domain $\cF\subset \per(\Gamma)$
as defined in section~2.2 of \cite{Amiot08a}. Thus, the subcategory
$\cF$ can be described as the full subcategory
\[
\per(\Gamma) \cap \cd_{\leq 0} \cap \mbox{}^\perp(\cd_{\leq -2}) \ko
\]
where $\cd_{\leq 0}$ is the left aisle of the canonical $t$-structure
on $\cd(\Gamma)$. Alternatively, the subcategory $\cF$ can be described as the full
subcategory whose objects are the cones on morphisms between
objects of the closure $\add(\Gamma)$ of the free module $\Gamma$ under finite
direct sums and direct factors. We know from
[loc. cit.] that the projection induces a $k$-linear equivalence
$\cF \iso \cc$. Now we consider the composition
\[
\cz \subset \cc \iso \cF \to \cF' \iso \cc'
\]
where $\cF'$ is the fundamental domain for $\cc'$.
Let us denote this functor by $F$.
Its restriction to the full subcategory $\ct$ whose objects
are the $\ol{P}_j$ associated with all vertices $j$
identifies with the canonical projection
functor
\[
\cp(Q,W) \to \cp(Q',W').
\]
In particular, since $\cp(Q,W)$ is isomorphic to $\ct$
by theorem~2.1 of \cite{Amiot08a}, the restriction induces an equivalence
\[
\ct/(\ol{P}_i) \to \ct'
\]
where $\ct'\subset\cc'$ is the full subcategory
of the $\ol{P}_j$, $j\neq i$. We will show below that the
functor $\cz/(\ol{P}_i) \to \cc'$ induced by $F$ is
naturally a triangle functor. Since this triangle functor
induces an equivalence between the cluster-tilting
subcategories
\[
\ct/(\ol{P}_i) \to \ct'\ko
\]
it is itself an equivalence by lemma~4.5 of
\cite{KellerReiten08}.

It remains to be shown that the functor $\ol{F}:\cz/(\ol{P}_i) \to \cc'$
induced by $F$ is naturally a triangle functor. Let
$q: \cc \to \cF$ be a $k$-linear quasi-inverse of the projection $\cF \to \cc$.
Let
\[
\xymatrix{ X \ar[r]^u & Y \ar[r]^v & Z \ar[r]^w & \Sigma X}
\]
be a triangle of $\cc$ such that $X$, $Y$ and $Z$ lie in $\cz$.
Notice that $v$ induces a surjection
\[
\cc(\ol{P}_i, Y) \to \cc(\ol{P}_i, Z).
\]
Form a triangle in $\per(\Gamma)$
\[
\xymatrix{X' \ar[r] & q(Y) \ar[r]^{q(v)} & q(Z) \ar[r] & \Sigma X'}.
\]

\noindent{\em Claim: The object $\tau_{\leq 0} X'$ lies in $\cF$. The object
$q(X)$ is isomorphic to $\tau_{\leq 0} X'$ by an isomorphism canonical up to a morphism factoring
through $q(Y)$. Moreover, the image of the morphism $\tau_{\leq 0} X' \to X'$
under the composed functor $\per(\Gamma) \to \per(\Gamma')\to \cc'$ is invertible.}

Indeed, from the triangle
\[
\xymatrix{\Sigma^{-1} q(Z) \ar[r] & X' \ar[r] & q(Y) \ar[r]^{q(v)} & q(Z) } \ko
\]
we see that $X'$ is left orthogonal to $\cd_{\leq -2}$. If $M$ belongs to $\cd_{\leq 0}$,
we have, using the Calabi-Yau property and the fact that $\tau_{>0} X'$ belongs
to $\cd_{fd}(\Gamma)$, the isomorphisms
\[
\Hom(\Sigma^{-1} \tau_{>0} X', \Sigma^2 M) = D\Hom(\Sigma^{-1} M, \Sigma^{-1} \tau_{>0} X')=0.
\]
Now from the triangle
\[
\xymatrix{\Sigma^{-1} \tau_{>0}X' \ar[r] & \tau_{\leq 0} X' \ar[r] & X' \ar[r]  & \tau_{>0} X'} \ko
\]
we see that $\tau_{\leq 0} X'$ belongs to $\mbox{}^\perp \cd_{\leq -2}$ and of course,
it belongs to $\cd_{\leq 0}$. Thus, it belongs to $\cF$. By our assumption, the object $\tau_{>0} X'$ has finite-dimensional
homology. Thus, the image of $\tau_{\leq 0} X'$ in $\cc$ is isomorphic to $\pi(X')$. By the
uniqueness of the triangle on the morphism $v: Y \to Z$, we obtain that $X$
is isomorphic to $\pi(\tau_{\leq 0} X')$ by a morphism canonical up to a morphism factoring through $Y$.
Thus, since $\tau_{\leq 0} X'$ belongs to $\cF$, the object $q(X)$ is isomorphic to
$\tau_{\leq 0} X'$ by an isomorphism canonical up to a morphism factoring through $q(Y)$.
Finally, the homology of $\tau_{>0} X'$ is concentrated
in degree $1$, and we have an exact sequence
\[
H^0(q(Y)) \to H^0(q(X)) \to H^1(\tau_{>0} X') \to 0.
\]
In particular, we have an exact sequence
\[
\Hom(\tilde{P}_i, q(Y)) \to \Hom(\tilde{P}_i, q(Z)) \to \Hom(\tilde{P}_i, \tau_{>0} X') \to 0.
\]
Since $\Hom(\tilde{P}_i, q(U))$ is isomorphic to $\Hom_{\cc}(\ol{P}_i, U)$ for each $U$ in $\cc$,
it follows that $\tau_{>0} X'$ is right orthogonal to $\Sigma^m \tilde{P}_i$ for all $m\in\Z$.
Thus it is right orthogonal to the kernel of the localization functor $L:\cd\Gamma\to \cd\Gamma'$.
Therefore, for each object $M$ of $\cd\Gamma$, the localization functor induces
a bijection
\[
\Hom(M, \tau_{>0} X') \to \Hom(LM, L\tau_{>0} X').
\]
If, for $M$, we take the objects $\Sigma^m \tilde{P}_j$ associated with the
vertices of $Q$, we obtain that $L\tau_{>0}X'$
has its homology of finite total dimension.
This implies the last part of the claim.

Now let us show that the functor $\ol{F}:\cz/(\ol{P}_i) \to \cc'$
induced by $F$ is naturally a triangle functor. In any triangulated
category, by default, we denote the suspension functor by $\Sigma$ and a
quasi-inverse of $\Sigma$ by $\Omega$. However, we denote
the desuspension functor of the `reduced' category $\ol{\cz}=\cz/(\ol{P}_i)$ by $\Omega_r$.
We will construct a natural isomorphism $\phi: \Omega \ol{F} \iso \ol{F} \Omega_r$ and show that the pair
$(\ol{F}, \phi)$ transforms triangles into triangles. Let $Z$ be an
object of $\cz$ and $P \to Z$ a right approximation of $Z$ by
$\add(\ol{P}_i)$. Form the triangle
\[
\Omega_r Z \to P \to Z \to \Sigma \Omega_r Z
\]
of $\cc$. The object $\Omega_r Z$ still belongs to $\cz$ and its
image in $\ol{\cz}$ is the desuspension of the image of $Z$. Now
form a triangle of $\per(\Gamma)$
\[
O \to q(P) \to q(Z) \to \Sigma O.
\]
Let us denote the composition of the localization functor
$L: \per(\Gamma)\to\per(\Gamma')$ with the projection $\per(\Gamma')\to \cc'$
by $L': \per(\Gamma) \to \cc'$.
By the claim, we have an isomorphism
\[
q(\Omega_r Z) \iso \tau_{\leq 0} O
\]
canonical up to a morphism factoring through $q(P)$ and
the morphism $L'\tau_{\leq 0}O \to L'O$ is invertible. The triangle
\[
\Omega q(Z) \to O \to q(P) \to q(Z)
\]
and the triangle structure on $L'$ yield an
isomorphism $\Omega L' q(Z) \to L'\Omega q(Z) \to L' O$.
Thus, we obtain a canonical composed isomorphism
\[
\Omega \ol{F} Z =\Omega L' q(Z) \iso L' \Omega q(Z) \iso L'O \liso L'(\tau_{\leq 0} O) \liso L'q\Omega_r(Z) = \ol{F} \Omega_r(Z)
\]
and we define $\phi(Z)$ to be this isomorphism. One checks that $\phi(Z)$
is natural in the object $Z$ of $\ol{\cz}$. Now let a
standard triangle of $\ol{\cz}$ be given. Then in $\cc$, with $P\to Z$ as above,
we have a morphism of triangles, where the first and fourth vertical morphisms are identities
\[
\xymatrix{ \Omega Z \ar[d] \ar[r] & \Omega_r Z \ar[r]\ar[d] & P\ar[d] \ar[r] & Z \ar[d] \\
           \Omega Z \ar[r] & X \ar[r] & Y \ar[r] & Z.}
\]
Notice that
the second morphism is not canonical; in fact, any morphism
making the first square commutative lifts the given morphism in $\ol{\cz}$.
We will show that $(\ol{F},\phi)$ takes the triangle $\Omega_r Z \to X\to Y\to Z$
of $\ol{\cz}$ to a triangle of $\cc'$.
For this, we form a morphism of triangles in $\per(\Gamma)$
\[
\xymatrix{
\Omega q(Z) \ar[r] \ar[d] & O \ar[d] \ar[r] & qP \ar[d] \ar[r] & qZ \ar[d] \\
\Omega q(Z) \ar[r]        & X'\ar[r]        & qY \ar[r]        & qZ}.
\]
Its image under $\pi: \per(\Gamma) \to \cc$ becomes isomorphic to
the given morphism after possibly adding a morphism factoring through $\Omega Z \to P$
to the given morphism $\Omega Z \to X$. Thus, we may assume that
the image under $\pi$ is isomorphic to the given morphism. By the claim,
the image of this morphism under $L'q$ is then isomorphic to
\[
\xymatrix{
\Omega L'q(Z) \ar[r] \ar[d] & L'\tau_{\leq 0} O  \ar[d] \ar[r] & L'qP \ar[d] \ar[r] & L'qZ \ar[d] \\
\Omega L'q(Z) \ar[r]        & L'\tau_{\leq 0} X' \ar[r]        & L'qY \ar[r]        & L'qZ}.
\]
We deduce that $(\ol{F},\phi)$ takes the triangle $\Omega Z \to X\to Y\to Z$
to the triangle
\[
\Omega L'q(Z) \to L'\tau_{\leq 0} X' \to  L'qY \to L'qZ
\]
of $\cc'$.
\end{proof}

\subsection{Deleting a sink in global dimension $2$}
\label{ss:deleting-a-sink}
As a second example of localization, let us consider a finite-dimensional
basic algebra $A$ over an algebraically closed field $k$. Assume
that $P_i$ is the indecomposable projective module corresponding
to a sink $i$ of the quiver of $A$. Let $e_i$ be the corresponding
idempotent of $A$. Let $B=A/Ae_i A$. Then it is easy to check that
the projection map
\[
A \to B
\]
is a localization of dg categories. Indeed, the localizing subcategory
$\cn$ of $\cd(A)$ generated by $P_i$ consists of all coproducts
of shifted copies of $P_i$ and its right orthogonal subcategory $\cn^\perp$
is the localizing subcategory generated by the $P_j$, $j\neq i$.
Clearly, this subcategory is equivalent to $\cd(B)$ by the
functor $?\lten_A B$.

From now on, let us assume that $A$ (and thus $B$) are of global
dimension at most~$2$. Then $A$ is in particular homologically
smooth and by theorem~\ref{thm:localization-CY-completion},
we obtain a localization of the corresponding $3$-Calabi-Yau completions
\[
\Pi_3(A) \to \Pi_3(B).
\]
Using theorem~\ref{thm:3-CY-completion-of-2-dim-alg}, we can
identify these dg algebras with Ginzburg algebras $\Gamma_3(Q,W)$
and $\Gamma_3(Q',W')$. It is not hard to check that $Q'$ is obtained
from $Q$ by omitting the vertex corresponding to $i$ and all arrows
starting or ending at it and that $W'$ is obtained from $W$ by
deleting all cycles passing through this vertex. Thus, the
results of section~\ref{ss:deleting-a-vertex-is-localization} apply
and we obtain that if $\cc(Q',W')$ is $\Hom$-finite, then it
is the Calabi-Yau reduction \cite{IyamaYoshino08} of $\cc(Q,W)$
at the image of $e_i\Gamma_3(Q,W)$. This example was treated previously
by Amiot-Oppermann \cite{AmiotOppermann09}
using different methods.

\subsection{Generalized mutations} \label{ss:generalized-mutations}
Let $k$ be an algebraically closed
field and $Q$ a finite quiver (possibly with oriented cycles).
Let $W$ be a potential on $Q$. Let $T$ be a tilting module over $kQ$,
\ie a module such that if $B$ is the endomorphism algebra of $T$,
the derived functor
\[
?\lten_B T : \cd(B) \to \cd(kQ)
\]
is an equivalence, \cf \cite{Keller07a}. If $X$ is a projective resolution of $T$ as
a $B$-$kQ$-bimodule, then $?\ten_B X$ is a Morita functor from
the dg category of bounded complexes of finitely generated
projective $B$-modules to the corresponding category of $kQ$-modules.
This functor yields an isomorphism
\[
HC_0(B) \iso HC_0(kQ).
\]
We let $W_B\in HC_0(B)$ be the element corresponding to $W\in HC_0(kQ)$.
Let $c_B$ and $c$ be the images in Hochschild homology
of $W_B$ and $W$ under Connes' map $B$.
Then by theorem~\ref{thm:localization-defo-CY-completion},
we have an induced Morita functor
\[
\Pi_3(B,c_B) \to \Pi_3(kQ,c)
\]
and by theorem~\ref{thm:CY-completion-is-Ginzburg-algebra} and
theorem~\ref{thm:3-CY-completion-of-2-dim-alg},
we obtain an induced Morita functor between Ginzburg algebras
\[
\Gamma_3(Q',W'+W'') \to \Gamma_3(Q,W)\ko
\]
where the quiver $Q'$ is obtained from the quiver of $B$ by adding a
new arrow $\rho_r: j \to i$ for each minimal relation $r:i \to j$,
the potential $W'$ is
\[
W'=\sum \rho_r r
\]
and the potential $W''$ lifts $W_B$ along the surjection $kQ' \to B$
taking all arrows $\rho_r$ to zero.
This construction is linked to mutation of quivers with potentials in the
sense of \cite{DerksenWeymanZelevinsky08} as follows: Let $i$
be a vertex of $Q$ which is the source of at least one arrow
and let $T$ be the direct sum of the
projectives $P_j$, $j\neq i$, and of $T_i$ defined by
the exact sequence
\[
0 \to P_i \to \bigoplus_{\alpha: i\to j} P_j \to T_i\to 0 \ko
\]
where the sum is taken over all arrows $\alpha$ with source $i$ and the
corresponding component of the map from $P_i$ to the sum is the left
multiplication by $\alpha$. Then the passage from $B=\End(T)$ to $kQ$
is given by an APR-tilt \cite{AuslanderPlatzeckReiten79}.
In this case, one can check that $(Q',W')$ is the `pre-mutation' of $Q$ at $i$
in the sense of \cite{DerksenWeymanZelevinsky08}, \ie $Q'$
is obtained from $Q$ by
\begin{itemize}
\item[1)] adding an arrow $[\alpha \beta]: j \to l$ for each subquiver
\[
\xymatrix{l \ar[r]^\beta & i \ar[r]^\alpha & l}
\]
of $Q$ and
\item[2)] replacing each arrow $\beta: l \to i$ by an arrow $\beta^*: i \to l$
and each arrow $\alpha: i \to j$ by an arrow $\alpha^*: j \to i$;
\end{itemize}
and the potential $W'$ is equal to $[W] + \sum [\alpha\beta] \beta^* \alpha^*$
where $[W]$ is obtained from $W$ by replacing each occurrence of a composition
$\alpha\beta$ in a cycle passing through $i$ by $[\alpha\beta]$.

\appendix
\section{Ginzburg's algebra is Calabi-Yau of dimension three \\by Michel Van den Bergh}
%\begin{center}
%\textsc{Michel Van den Bergh}
%\end{center}

\subsection{Introduction}
To a quasi-free algebra $A$ and an element $z\in A$ (a ``super
potential'') Ginzburg associates in \cite{Ginzburg06} a certain DG-algebra
 ${\mathfrak{D}(A,z)}$. He proves
that if  ${\mathfrak{D}(A,z)}$ has no negative cohomology then it is
$3$-Calabi-Yau (see \cite[Remark 5.3.2]{Ginzburg06} but beware that Ginzburg
uses homological grading). It was recently observed by Keller that
 ${\mathfrak{D}(A,z)}$ is
\emph{always}
$3$-Calabi-Yau.  Below we give a proof of this fact using the
formalism of non-commutative differential geometry.
\subsection{Notations and conventions}
Throughout we work over the semi-simple base ring $l=ke_1+\cdots+ke_d$
where $e_i^2=e_i$ and $k$ is a field. In other words all our rings $R$
are implicitly equipped with a ring homomorphism $l\r R$. Unadorned
tensor products are \emph{over $k$}.
\subsection{Pairings of bimodules}
Duality for bimodules is confusing so here we write out our
 conventions.
This is a copy of \cite[\S 3.1]{VandenBergh08b}. Let $B$ be an arbitrary graded
$k$-algebra. We equip $B\otimes B$ with the outer $B$-bimodule
structure.  If $Q$ is a graded $B$-bimodule then $Q^\ast$ is by
 definition
$\Hom_{B^e}(Q,B\otimes B)$. This is still a $B$-bimodule through the
 surviving
inner bimodule structure on $B\otimes B$.

A pairing (or bilinear map) between graded $B$-bimodules $P,Q$
is a homogeneous map of degree $n$
\begin{equation}
\label{ref-3.1-0}
\langle-,-\rangle:P\times Q\r B\otimes B
\end{equation}
such that $\langle p,-\rangle$ is linear for the outer bimodule
structure on $B\otimes B$ and $\langle -,q\rangle$ is linear for the
inner bimodule structure on $B\otimes B$. The obvious example is of
course when $P$ is the bimodule dual $Q^\ast$ of $Q$ and
 $\langle-,-\rangle$
is the evaluation pairing.  We say that the pairing is non-degenerate
if $P$, $Q$ are finitely generated graded projective bimodules and the
pairing induces an isomorphism $P\cong \Sigma^n (Q^\ast)$.
\begin{example}
\label{ref-3.1-1}
Let $P=\Sigma^n(B\otimes_l B)$, $Q=B\otimes_l B$. It is easy to see
 that the
pairing
\[
\langle a\otimes b,c\otimes
 d\rangle=(-1)^{|a||b|+|a||c|+|b||c|-n|c|}\sum_i ce_ib\otimes ae_id
\]
for $a,b,c,d\in B$ is well-defined and non-degenerate of degree $n$.
\end{example}
The opposite pairing
of $\langle-,-\rangle$ is defined by \def\opp{\operatorname{opp}}
\[
\langle-,-\rangle^{\opp}: Q\times P\r B\otimes B:(q,p)\mapsto
 (-1)^{(n+|p)(n+|q|)}\sigma \langle p,q\rangle
\]
where ``$\sigma$'' denotes the interchange operator: $\sigma(a\otimes
 b)=(-1)^{|a||b|}(b\otimes a)$.
So although the definition of a pairing of bimodules is asymmetric it
is not important which bimodule appears on the left or right.

If $P=Q$ then we say that a pairing $\langle-,-\rangle$ is
 (anti-)symmetric
if
\[
\langle p,p'\rangle=(-)\langle p,p'\rangle^{\opp}
\]
If $B$ is a DG-algebra and $P$, $Q$ are DG-bimodules then we say that
\eqref{ref-3.1-0} is a DG-pairing if it is compatible with the
 differential,
i.e.\ if
\[
d\langle p,q\rangle=\langle dp,q\rangle+(-1)^{|p|+n} \langle
 p,dq\rangle
\]
If a DG-pairing is non-degenerate then obviously it induces an
 isomorphism
of \emph{DG-modules} $P\cong \Sigma^n (Q^\ast)$.

\subsection{Differentials and double derivations}
\label{ref-4-2}
 If $B$ is a graded algebra then we denote by $\Omega_{B/l}$ the
bimodule of relative differentials for $B/l$. $\Omega_{B/l}$ fits in an
 exact sequence
\begin{equation}
\label{ref-4.1-3}
0\r\Omega_{B/l} \xrightarrow{\phi} B\otimes_l B\r B\r 0
\end{equation}
  We denote the generators of $\Omega_{B/l}$
by $Db$, $b\in B$ where $\phi(Db)=b\otimes 1-1\otimes b$.

With respect to signs \emph{we assume that $D$ has homological degree
 zero}.
If $B$ is equipped with a differential $d$ then we extend it to
 $\Omega_{B/l}$
by putting $d(Db)=D(db)$.

\medskip

Assume that $B$ is equipped with a graded double Poisson bracket of
 degree $n$
(see
\cite[\S 2.1]{VandenBergh08a}). Then  there is a well-defined anti-symmetric
 pairing of
degree on $\Omega_{B/l}$ of degree $n$
which is determined by
\[
\langle D\eta,D\xi\rangle=
\ldb \eta,\xi\rdb
\]
We define
$\mathbb{T}_{B/l}=\Omega^\ast_{B/l}$.
We may identify $\mathbb{T}_{B/l}$ with the bimodule of \emph{double
 derivations}
\[
\mathbb{T}_{B/l}=\Der_{B/l}(B,B\otimes B)
\]
If $b\in B$ and
$\delta\in \TT_{B/l}$ then we write $\delta(b)=\delta(b)'\otimes
\delta(b)''$.  $\TT_{B/l}$ contains a canonical element $E$ given by
\[
E(a)=\sum_i ae_i\otimes e_i-e_i\otimes e_i a
\]
\begin{remark} We may write $E(a)=[a,\xi]$ where $\xi=\sum_i e_i\otimes
 e_i
\in l\otimes l$. If, as in \cite{CrawleyBoeveyEtingofGinzburg07}, one works over a more
general separable $k$-algebra then one must replace $\xi$ by the
 separability
idempotent in $l^e$.
\end{remark}
\subsection{The graded cotangent bundle}
Now let $A$ be a quasi-free finitely generated $k$-algebra and put
$\TT A=T_A(\Sigma \TT_{A/l})$. According to \cite[\S 3.2]{VandenBergh08a} $\TT
 A$ carries a canonical
graded double Poisson bracket of degree $1$: the so-called double
 Schouten-Nijenhuis
bracket.\footnote{In \cite{VandenBergh08a} this bracket had degree $-1$ since
we used the opposite grading.}  Thus according to \S\ref{ref-4-2} we
 get an induced anti-symmetric
pairing on $\Omega_{\TT A/l}$ of degree $1$.
\begin{lemma}
\label{ref-5.1-4}
This pairing is non-degenerate.
\end{lemma}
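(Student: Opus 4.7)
The plan is to reduce the non-degeneracy of the induced pairing on $\Omega_{\TT A/l}$ to the tautological non-degeneracy of the evaluation pairing between $\TT_{A/l}$ and $\Omega_{A/l}$, via a block decomposition of $\Omega_{\TT A/l}$ together with a simple linear-algebra observation.

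First I would decompose $\Omega_{\TT A/l}$ as a $\TT A$-bimodule. Since $A$ is quasi-free, $\Omega_{A/l}$ is finitely generated projective as an $A$-bimodule, so the transitivity sequence for $l\subset A\subset \TT A$ is short exact and split. Combined with the standard description of the relative differentials of a tensor algebra, this yields an isomorphism of $\TT A$-bimodules
\[
\Omega_{\TT A/l} \;\cong\; H \oplus V\ko \quad H=\TT A \otimes_A \Omega_{A/l} \otimes_A \TT A\ko \quad V=\TT A \otimes_l \Sigma\TT_{A/l} \otimes_l \TT A\ko
\]
with $H$ generated by the $Da$ ($a\in A$) and $V$ by the $D\xi$ ($\xi\in \Sigma\TT_{A/l}$). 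Both summands are finitely generated projective over $(\TT A)^e$, so non-degeneracy can be checked on generators.

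Next I would compute the pairing on these generators using the defining formula $\langle D\eta, D\xi\rangle = \ldb \eta, \xi \rdb$ together with the explicit description of the double Schouten--Nijenhuis bracket from \cite{VandenBergh08a}. The three types of brackets that appear are: $\ldb a, b\rdb=0$ for $a,b\in A$ (so the subbimodule $H$ is isotropic); $\ldb \xi, a\rdb = \xi(a)$ for $\xi\in\TT_{A/l}$, $a\in A$ (so the restriction of the pairing to $V\times H$ is precisely the tautological evaluation pairing $\TT_{A/l}\times\Omega_{A/l}\to A\otimes A$, extended $\TT A$-bilinearly); and the double bracket on $\TT_{A/l}\times\TT_{A/l}$, which is in general nonzero but whose value is irrelevant for non-degeneracy. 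Relative to $H\oplus V$, the pairing therefore takes the block form
\[
\begin{pmatrix} 0 & P^{\opp} \\ P & *\end{pmatrix}\ko
\]
where $P$ is induced $\TT A$-bilinearly by the evaluation pairing between $\TT_{A/l}$ and $\Omega_{A/l}$.

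I would conclude by the elementary observation that a pairing with non-degenerate off-diagonal blocks is itself non-degenerate: after absorbing the diagonal $*$ into a change of basis on one factor, the induced map $\Omega_{\TT A/l}\to\Sigma(\Omega_{\TT A/l}^{\ast})$ becomes block anti-diagonal with invertible entries given by the isomorphisms $V\cong \Sigma(H^{\ast})$ and $H\cong \Sigma^{-1}(V^{\ast})$ arising from $P$. These are isomorphisms precisely because $\TT_{A/l}=\Omega_{A/l}^{\ast}$ and $\Omega_{A/l}$ is finitely generated projective, which is the defining property of quasi-freeness. The main technical hurdle I expect will be tracking the degree shift $\Sigma$ and the Koszul signs carefully enough that the block form really has the shape claimed (in particular that the off-diagonal blocks are genuinely transposes of each other with the correct sign); once that bookkeeping is in place, non-degeneracy is automatic.
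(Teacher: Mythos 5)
Your argument is correct and uses exactly the same key computation as the paper, namely that the pairing of a vertical generator $D\delta$ ($\delta\in\TT_{A/l}$) against a horizontal generator $Da$ ($a\in A$) recovers the tautological evaluation $\delta(a)$, together with the fact that $\TT_{A/l}=\Omega_{A/l}^\ast$ with $\Omega_{A/l}$ finitely generated projective over $A^e$. The packaging is slightly different: the paper keeps the short exact sequence $0\to \TT A\otimes_A\Omega_{A/l}\otimes_A\TT A\xrightarrow{\alpha}\Omega_{\TT A/l}\xrightarrow{\beta}\Sigma(\TT A\otimes_A\TT_{A/l}\otimes_A\TT A)\to 0$, shows $\alpha$ and $\beta$ are adjoint for the pairing, and reads off the non-degeneracy from the resulting morphism of short exact sequences with isomorphic outer terms (an implicit five-lemma); you instead split the sequence and phrase the conclusion as invertibility of a block matrix with zero in one diagonal corner and non-degenerate off-diagonal blocks. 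These are entirely equivalent, and both depend on the same two facts: the isotropy of the image of $\alpha$ (which is $\langle\beta\alpha(\cdot),\cdot\rangle=0$ in the paper's language, and the vanishing $\ldb a,b\rdb=0$ for $a,b\in A$ in yours) and the identification of the off-diagonal block with the evaluation pairing. One small correction: the vertical summand should read $V=\TT A\otimes_A\Sigma\TT_{A/l}\otimes_A\TT A$ with tensor products over $A$, not over $l$ — this is what $\Omega_{\TT A/A}$ gives, and it is the module appearing (with the shift outside) in the paper's sequence.
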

\begin{proof}
This can be deduced from the fact that the double
Schouten-Nijenhuis bracket is actually induced from a bisymplectic form
 \cite{CrawleyBoeveyEtingofGinzburg07,VandenBergh08a}. To help the reader
let us give a proof here. We have a standard exact sequence
\[
0\r \TT A\otimes_A \Omega_{A/l}\otimes_A \TT A\xrightarrow{\alpha}
\Omega_{\TT A/l}\xrightarrow{\beta} \Sigma(\TT A\otimes_A
 \TT_{A/l}\otimes_A \TT A)\r 0
\]
with for $\omega\in \Omega_{A/l}$, $\delta\in \TT_{A/l}$
\begin{align*}
\alpha(1\otimes \omega\otimes 1)&=\omega\\
\beta(D\delta)&=1\otimes\delta\otimes 1
\end{align*}
Hence we have for $a\in A$
\[
\langle 1\otimes D\delta\otimes 1,\alpha(Da)\rangle=\langle
 D\delta,Da\rangle=
\ldb \delta,a\rdb=\delta(a)=\langle \beta(D\delta),Da\rangle
\]
where on the right we have the standard (non-degenerate) pairing
 between
$\TT_{A/l}$ and $\Omega_{A/l}$, extended to a (still non-degenerate)
 pairing between
$\TT A\otimes_A \TT_{A/l}\otimes_A \TT A$ and $\TT A\otimes_A
 \Omega_{A/l}\otimes_A \TT A$.
It follows that $\alpha$ and $\beta$ are adjoint.

Thus
one gets a commutative diagram
\[
\begin{CD}
0@>>> \TT A\otimes_A \Omega_{A/l}\otimes_A \TT A @>\alpha>>
\Omega_{\TT A/l} @>\beta>> \Sigma(\TT A\otimes_A \TT_{A/l}\otimes_A \TT
 A) @>>> 0\\
@. @| @VVV @| @.\\
0@>>> \TT A\otimes_A \TT_{A/l}^\ast\otimes_A \TT A @>>\beta^\ast>
\Sigma (\Omega_{\TT A/l}^\ast) @>>\alpha^\ast> \Sigma(\TT A\otimes_A
 \Omega_{A/l}^\ast\otimes_A \TT A) @>>> 0
\end{CD}
\]
Hence  the middle arrow is an isomorphism
\end{proof}
Now fix a ``super potential'' $z\in \sum e_i Ae_i$. Contraction with
$Dz$ defines a differential $d$ on $\TT A$ \cite{Ginzburg06} (see also
\cite[\S 3.1]{VandenBergh08b}). On generators we have
\begin{align*}
da&=0\qquad\text{for $a\in A$}\\
d\delta&=\delta(z)''\delta(z)'\qquad\text{for $\delta\in \TT_{A/l}$}
\end{align*}
We will denote resulting DG-algebra by $\TT(A,z)$.

\medskip

In the commutative case it is well-known that contraction with a
$1$-form is a derivation for the Gerstenhaber structure on the graded
cotangent bundle and hence in particular it is compatible with the
Schouten bracket. A similar result is true in the non-commutative
case.
\begin{lemma}
\label{ref-5.2-5} $\TT(A,z)$ is a DG-Gerstenhaber algebra with product
of degree zero and double bracket of degree one.
\end{lemma}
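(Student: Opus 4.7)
The plan is to verify that the differential $d$ defined by contraction with $Dz$ satisfies three conditions: (i) $d^2 = 0$, (ii) $d$ is a graded derivation of the product (with the appropriate Koszul sign), and (iii) $d$ is a graded derivation of the double Schouten-Nijenhuis bracket $\ldb-,-\rdb$, i.e.
\[
d\ldb x,y\rdb = \ldb dx, y\rdb + (-1)^{|x|+1}\ldb x, dy\rdb
\]
on homogeneous elements $x,y\in \TT A$. Condition (ii) is immediate from the definition: by construction $d$ is extended from generators as a (super-)derivation of the tensor algebra $\TT A = T_A(\Sigma\TT_{A/l})$, so nothing need be done there beyond recording the sign convention.

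For (i), since $d^2$ is a derivation of degree $2$, it suffices to check it vanishes on the generating bimodules $A$ and $\Sigma\TT_{A/l}$. On $A$ we have $d=0$, so $d^2|_A=0$ is trivial. On a double derivation $\delta\in\TT_{A/l}$ we have $d\delta=\delta(z)''\delta(z)'\in A$, hence $d^2\delta=d(\delta(z)''\delta(z)')=0$ as well.

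The substantive point is (iii), the compatibility with the double bracket. My approach is to reduce this identity to a statement about contraction with an exact $1$-form, by interpreting the differential intrinsically as $\iota_{Dz}$, where $\iota_\omega$ denotes contraction with $\omega\in\Omega_{\TT A/l}$ regarded as an operation on $\TT A$ via the pairing of Lemma~\ref{ref-5.1-4}. Once this identification is made, the required derivation property becomes the noncommutative analogue of the classical fact that contraction with a closed (here even exact) $1$-form acts as a derivation of the Schouten bracket on polyvector fields. Since the double Schouten--Nijenhuis bracket on $\TT A$ is induced by the bisymplectic structure provided by this very pairing (as cited from \cite{CrawleyBoeveyEtingofGinzburg07,VandenBergh08a}), the Cartan-type identity $[\iota_{D\eta},\ldb-,-\rdb] = \ldb D\eta, -\rdb\cdot + \cdots$ for $1$-forms $D\eta$ follows formally, and vanishes when the $1$-form is exact of the form $Dz$ with $z\in A$ of bracket degree zero.

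In practice I would verify (iii) by reduction to generators. Both sides are double derivations in each argument (this is a direct consequence of the fact that $d$ is a derivation of the product together with the biderivation property of $\ldb-,-\rdb$), so it suffices to check the identity when $x,y\in A\cup\Sigma\TT_{A/l}$. The four cases are: both arguments in $A$ (both sides vanish since $d|_A=0$ and $\ldb A,A\rdb=0$ for the Schouten bracket of degree $1$ on $\TT A$); one argument in $A$ and one in $\Sigma\TT_{A/l}$ (where $\ldb \delta,a\rdb = \delta(a)$ and $d\delta = \delta(z)''\delta(z)'$, so both sides unfold to the same expression involving $\delta$ applied to $z$ and to $a$ by direct computation); and both arguments in $\Sigma\TT_{A/l}$, which is the main obstacle. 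In this last case, unraveling the definition of the double Schouten--Nijenhuis bracket on two double derivations and applying $d$ yields a combinatorial identity among the tensor components $\delta(z)',\delta(z)''$ and $\eta(z)',\eta(z)''$ which expresses precisely the Jacobi-type compatibility of $\ldb-,-\rdb$ with the bisymplectic exact $2$-form $DDz$; this is where I expect to spend most of the work, carefully tracking signs according to the conventions fixed in Section~\ref{ref-4-2}.
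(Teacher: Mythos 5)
Your overall architecture matches the paper's: you observe that $d$ is by construction a bimodule derivation of the tensor algebra (so compatibility with the product is automatic), you note $d^2=0$ can be checked on generators, and you reduce compatibility with the double bracket to generators, correctly singling out the case $\delta,\Delta\in\TT_{A/l}$ as the only substantive one. (The paper states this reduction in one line and likewise considers only that case.) Your sign $(-1)^{|x|+1}$ in the graded Leibniz rule is consistent with the paper's unsigned formula~(\ref{ref-5.1-6}) once one recalls that elements of $\Sigma\TT_{A/l}$ sit in odd degree.

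However, the proposal stops exactly where the proof begins. The entire content of the paper's argument is the explicit expansion: one must decompose $\ldb\delta,\Delta\rdb$ into its left and right components $\ldb\delta,\Delta\rdb_l\in\TT_{A/l}\otimes A$ and $\ldb\delta,\Delta\rdb_r\in A\otimes\TT_{A/l}$ via the $\sigma$-twisted compositions of $\delta$ and $\Delta$ from \cite[\S 3.2]{VandenBergh08a}, apply $d$ (which acts differently on the two tensor factors depending on which lies in $A$ and which in $\TT_{A/l}$), and separately expand $\ldb d\delta,\Delta\rdb = -\sigma\,\Delta(\delta(z)''\delta(z)')$ and $\ldb\delta,d\Delta\rdb = \delta(\Delta(z)''\Delta(z)')$ using the derivation property of $\Delta$ and $\delta$. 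Matching the four resulting Sweedler-type terms on each side is the proof; ``this is where I expect to spend most of the work'' is an acknowledgment that it has not been done, not a substitute for doing it. Your alternative ``intrinsic'' route via a noncommutative Cartan identity for contraction with an exact $1$-form against the bisymplectic structure is plausible in spirit, but as written it is not an argument: you would need to establish the noncommutative Cartan identity itself, which after unwinding the definitions is exactly the same tensor-component bookkeeping, and neither the paper nor the references you cite give it in the form you invoke. So the proposal has the right scaffolding but a genuine gap where the computation should be.
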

\begin{proof} We only need to check compatibility of the differential
with the double bracket. This can be done on generators. The only
non-trivial verification is
\begin{equation}
\label{ref-5.1-6}
d\ldb \delta,\Delta\rdb=\ldb d\delta,\Delta\rdb+\ldb \delta,d\Delta\rdb
\end{equation}
for $\delta,\Delta\in \TT_{A/l}$.

Following the notations of \cite[\S 3.2]{VandenBergh08a} we have
\[
\ldb \delta,\Delta\rdb=\ldb \delta,\Delta\rdb_l
+\ldb \delta,\Delta\rdb_r
\]
with
\begin{align*}
\ldb \delta,\Delta\rdb_l&=\ldb \delta,\Delta\rdb_l'\otimes \ldb
 \delta,\Delta\rdb_l''\in \TT_{A/l}\otimes A\\
\ldb \delta,\Delta\rdb_r&=\ldb \delta,\Delta\rdb_r'\otimes \ldb
 \delta,\Delta\rdb_r''\in A\otimes \TT_{A/l}
\end{align*}
so that we have
\begin{align*}
d\ldb \delta,\Delta\rdb&=d\ldb \delta,\Delta\rdb_l+d\ldb
 \delta,\Delta\rdb_r
\end{align*}
with
\begin{align*}
d\ldb \delta,\Delta\rdb_l&=\ldb \delta,\Delta\rdb_l'(z)''\ldb
 \delta,\Delta\rdb_l'(z)'\otimes
\ldb \delta,\Delta\rdb_l''\\
d\ldb \delta,\Delta\rdb_r&=\ldb \delta,\Delta\rdb_r'\otimes \ldb
 \delta,\Delta\rdb_r''(z)'' \ldb \delta,\Delta\rdb_r''(z)'
\end{align*}
By definition we have
\begin{align*}
  \ldb \delta,\Delta \rdb _l&=\sigma_{23}\circ((\delta\otimes
 1)\Delta-(1\otimes \Delta)\delta)\\
  \ldb \delta,\Delta\rdb {}_r&=\sigma_{12}\circ ((1\otimes
  \delta)\Delta-(\Delta\otimes 1)\delta)
\end{align*}
which after inspection becomes
\begin{align*}
d\ldb \delta,\Delta\rdb_l&=\Delta(z)''\delta(\Delta(z)')'\otimes
 \delta(\Delta(z)')''-\Delta(\delta(z)'')''\delta(z)'\otimes
 \Delta(\delta(z)'')'\\
d\ldb \delta,\Delta\rdb_r&=\delta(\Delta(z)'')'\otimes
 \delta(\Delta(z)'')''\Delta(z)'-
\Delta(\delta(z)')''\otimes \delta(z)''\Delta(\delta(z)')'
\end{align*}
On the other hand we have
\begin{align*}
\ldb d\delta,\Delta\rdb&=-\sigma \Delta(\delta(z)''\delta(z)')\\
&=-\Delta(\delta(z)'')''\delta(z)'\otimes \Delta(\delta(z)'')'
-\Delta(\delta(z)')''\otimes \delta(z)''\Delta(\delta(z)')'
\end{align*}
and
\begin{align*}
\ldb \delta,d\Delta\rdb&=\delta(\Delta(z)''\Delta(z)')\\
&=\delta(\Delta(z)'')'\otimes \delta(\Delta(z)'')''\Delta(z)'
+\Delta(z)''\delta(\Delta(z)')'\otimes \delta(\Delta(z)')''
\end{align*}
so that \eqref{ref-5.1-6} indeed holds.
\end{proof}
We immediately deduce
\begin{lemma} The pairing on $\Omega_{\TT A/l}$ is compatible with $d$.
\end{lemma}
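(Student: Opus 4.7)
The plan is to verify the DG-pairing condition
\[
d\langle \omega_1, \omega_2\rangle \;=\; \langle d\omega_1, \omega_2\rangle + (-1)^{|\omega_1|+1}\langle \omega_1, d\omega_2\rangle
\]
for $\omega_1,\omega_2\in \Omega_{\TT A/l}$, and to reduce it to the generators of the form $D\eta$, $\eta\in \TT A$. Since $\Omega_{\TT A/l}$ is generated as a $\TT A$-bimodule by such $D\eta$, and since both sides of the identity are $k$-bilinear and behave predictably under the sesquilinearity of the pairing (linearity for the outer bimodule structure on $\TT A\otimes \TT A$ in one slot and for the inner structure in the other), the Leibniz rule for $d$ on $\Omega_{\TT A/l}$ together with the fact that $d$ on $\TT A$ is a derivation of the product reduces the verification to the case $\omega_1=D\eta$, $\omega_2=D\xi$.

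On generators the verification is essentially automatic from what has already been set up. The differential on $\Omega_{\TT A/l}$ was defined in \S\ref{ref-4-2} by $d(D\eta)=D(d\eta)$, and by construction the pairing satisfies $\langle D\eta,D\xi\rangle=\ldb \eta,\xi\rdb$. Substituting, the DG-pairing condition on $(D\eta,D\xi)$ becomes
\[
d\,\ldb \eta,\xi\rdb \;=\; \ldb d\eta,\xi\rdb + (-1)^{|\eta|+1}\,\ldb \eta,d\xi\rdb,
\]
which is precisely the compatibility of $d$ with the double Schouten--Nijenhuis bracket that was just established in Lemma~\ref{ref-5.2-5} (the DG-Gerstenhaber property; note $|D\eta|=|\eta|$ since $D$ has degree zero).

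The main technical nuisance will be the bookkeeping involved in extending from generators: one has to check that the Koszul signs produced by the Leibniz rule on $a\cdot D\eta\cdot b$ (with $a,b\in \TT A$) match the signs dictated by the mixed inner/outer sesquilinearity of the pairing. However, this is an entirely formal computation of the same flavour as the non-degeneracy check in Lemma~\ref{ref-5.1-4}, with no new input beyond (i)~the fact that $d$ is a degree $-1$ derivation of $\TT A$, (ii)~the extension rule $d(D\eta)=D(d\eta)$ on $\Omega_{\TT A/l}$, and (iii)~the DG-Gerstenhaber identity of Lemma~\ref{ref-5.2-5}. Thus the real content of the lemma is contained in the preceding Gerstenhaber compatibility, and the present statement is simply its translation through the defining formula $\langle D\eta,D\xi\rangle=\ldb \eta,\xi\rdb$ for the pairing.
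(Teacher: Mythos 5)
Your proposal is correct and takes essentially the same route as the paper's own proof: reduce to the $\TT A$-bimodule generators $D\eta$ of $\Omega_{\TT A/l}$, where the DG-pairing identity becomes the defining formula $\langle D\eta,D\xi\rangle=\ldb \eta,\xi\rdb$ combined with the compatibility of $d$ with the double Schouten--Nijenhuis bracket established in the preceding DG-Gerstenhaber lemma. The paper is equally terse about the formal reduction to generators and about the sign bookkeeping, so nothing essential is missing relative to its own level of detail.
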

\begin{proof} We have to prove for $\omega,\omega'\in\Omega_{{\TT
 A}/l}$
\[
d\langle \omega,\omega'\rangle=\langle d\omega,\omega'\rangle+
(-1)^{|\omega'|+1} \langle \omega,d\omega'\rangle
\]
One verifies that it is sufficient to check this on ${\TT A}$-bimodule
 generators of
$\Omega_{{\TT A}/l}$.
The only interesting case to consider is $\omega=D\delta$,
 $\omega'=D\Delta$
and $\delta,\Delta\in \TT_{A/l}$.  In that case
the result is a direct concequence of Lemma \ref{ref-5.2-5} and in
particular \eqref{ref-5.1-6}.
\end{proof}

\subsection{Ginzburg's algebra}
Let $A,z,{{\TT } A}$ be as in the previous section. We have $E\in {\TT
 }_{A/l}\subset {{\TT } A}$.
We immediately check that $dE=0$. So $E$ defines a (presumably always
non-trivial)
cohomology class in ${{\TT } A}$. Ginzburg's idea is to kill this class
 through
adjunction of an extra variable $c$ of degree $-2$ commuting with $l$.
 So Ginzburg's
algebra is
\[
{\mathfrak{D}(A,z)}={{\TT } (A,z)}\ast_l l[c]
\]
where $|c|=-2$  and
$
dc=E
$.
To simplify the notations we will write ${\TT }={\TT }(A,z)$ and
 $\mathfrak{D}=
\mathfrak{D}(A,z)$ in this section.

We have a presentation
\[
0\r \Omega_{{\mathfrak{D}}/l}\xrightarrow{\phi} {\mathfrak{D}}\otimes_l
 {\mathfrak{D}}\r {\mathfrak{D}}\r 0
\]
where $\phi$ is as in \eqref{ref-4.1-3}.
It is easy to see that as graded ${\mathfrak{D}}$-bimodule we have
\[
\Omega_{{\mathfrak{D}}/l}=({\mathfrak{D}}\otimes_{\TT } \Omega_{{\TT
 }/l}\otimes_{\TT } {\mathfrak{D}})\oplus
 ({\mathfrak{D}}\otimes_l l Dc\otimes_l {\mathfrak{D}})
\]
\def\II{\mathbb{I}}\def\cone{\operatorname{cone}}
Put $\II=\sum_i e_i\otimes e_i$. Then ${\mathfrak{D}}$ is
 quasi-isomorphic to $\cone \phi$ and
$\cone \phi$ is given by
\[
P=({\mathfrak{D}}\otimes_l l\II \otimes_l {\mathfrak{D}} )
\oplus \Sigma({\mathfrak{D}}\otimes_{\TT } \Omega_{{\TT
 }/l}\otimes_{\TT } {\mathfrak{D}})
\oplus \Sigma ({\mathfrak{D}}\otimes_l l Dc\otimes_l {\mathfrak{D}})
\]
with total differential
\begin{align*}
d_P\mathbb{I}&=0\\
d_P\omega&=\phi_{\TT }(\omega)-d_{\TT }\omega\qquad \text{for
 $\omega\in \Omega_{\TT }$}\\
d_P(Dc)&=[c,\II]- D(E)
\end{align*}
We define a \emph{symmetric} pairing of degree $3$ on
$P$ by putting
\begin{align*}
\langle Dc,\mathbb{I}\rangle_P&=\sum_i e_i\otimes e_i\\
\langle \omega,\omega'\rangle_P&=(-1)^{|\omega|_{\TT }-1}\langle
 \omega,\omega'\rangle_{\TT }
\end{align*}
and assigning the value zero on other combinations of  generators of
 $P$
taken from $\II,\Omega_{{\TT }/l},Dc$.  Note
that in $P$ we have $|\II|=0$, $|Dc|=-3$ and $|\omega|_P=|\omega|_{\TT
 }-1$
for $\omega\in \Omega_{{\TT }/l}$. The requirement of symmetry yields
\begin{align*}
\langle \II,Dc\rangle_P&=(-1)^{(|\II|+3)(|Dc|+3)}\sigma \langle
 Dc,\II\rangle_P\\
&=\sum_i e_i\otimes e_i
\end{align*}
By combining Example \ref{ref-3.1-1}
with Lemma \ref{ref-5.1-4} we see that $\langle-,-\rangle_P$ is
 non-degenerate.

We claim that $\langle-,-\rangle_P$ is compatible with the
 differential. By
symmetry this amounts to six verifications which we now carry out.

\medskip

\noindent \textbf{Case 1 } One has
\[
d_{\mathfrak{D}}\langle Dc,Dc\rangle_P=0
\]
and
\begin{align*}
\langle d_P Dc,Dc\rangle_P&=\langle [c,\II]- D(E),Dc\rangle_P\\
&=\sum_i(e_i\otimes ce_i-e_ic\otimes e_i)\\
&=\sum_i(e_i\otimes e_ic-ce_i\otimes e_i)
\end{align*}
and
\begin{align*}
\langle Dc,d_P Dc\rangle_P&=\langle Dc,[c,\II]- D(E)\rangle_P\\
&=\sum_i(ce_i\otimes e_i-e_i\otimes e_ic)
\end{align*}
so that
\[
d_{\mathfrak{D}}\langle Dc,Dc\rangle_P=\langle d_P
 Dc,Dc\rangle_P+(-1)^{|Dc|+3}\langle Dc,d_P Dc\rangle_P
\]
\noindent \textbf{Case 2 } One has for $u\in {\TT }$
\[
d_{\mathfrak{D}}\langle Dc,Du\rangle_P=0
\]
and
\begin{align*}
\langle d_P Dc,Du\rangle_P&=\langle [c,\II]- D(E),Du\rangle_P\\
&=-(-1)^{|E|_{\TT }-1}\langle D(E),Du\rangle_{\TT }\\
&=-\ldb E, u\rdb\\
&=-\sum_i(ue_i\otimes e_i-e_i\otimes e_i u)
\end{align*}
and
\begin{align*}
\langle Dc,d_PDu\rangle_P &=\langle Dc,[u,\II]-Dd_{\TT }u\rangle\\
&=\sum_i ue_i\otimes e_i-e_i\otimes e_i u
\end{align*}
so that
\[
d_{\mathfrak{D}}\langle Dc,Du\rangle_P=\langle d_P
 Dc,Du\rangle_P+(-1)^{|Dc|+3}\langle Dc,d_P Du\rangle_P
\]
\noindent \textbf{Case 3 } One has
\[
d_{\mathfrak{D}}\langle Dc,\II\rangle_P=0
\]
and
\begin{align*}
\langle d_P Dc,\II\rangle_P&=\langle [c,\II]-D(E),\II\rangle_P\\
&=0
\end{align*}
and
\begin{align*}
\langle Dc,d_P \II\rangle_P&=0
\end{align*}
Hence this case is trivial.

\medskip

\noindent \textbf{Case 4 } One has for $\omega,\omega'\in \Omega_{{\TT
 }/l}$
\begin{align*}
d_{\mathfrak{D}}\langle \omega,\omega'\rangle_P&=(-1)^{|\omega|_{\TT
 }-1}d_{\TT }\langle \omega,\omega'\rangle_{\TT }\\
&=(-1)^{|\omega|_{\TT }-1}\langle d_{\TT }\omega,\omega' \rangle_{\TT
 }+(-1)^{|\omega|_{\TT }-1}
(-1)^{|\omega|_{\TT }+1}\langle \omega,d_{\TT }\omega' \rangle_{\TT }\\
&=(-1)^{|\omega|_{\TT }-1}\langle d_{\TT }\omega,\omega' \rangle_{\TT
 }+\langle \omega,d_{\TT }\omega' \rangle_{\TT }
\end{align*}
and
\begin{align*}
\langle d_P\omega,\omega'\rangle_P&=
\langle \phi_{\TT }(\omega)-d_{\TT }\omega,\omega'\rangle_P\\
&=-(-1)^{|\omega_{\TT }|+1-1}\langle\langle d_{\TT
 }\omega,\omega'\rangle_{\TT }\\
&=(-1)^{|\omega_{\TT }|-1}\langle d_{\TT }\omega,\omega'\rangle_{\TT }
\end{align*}
and
\begin{align*}
\langle \omega,d_P\omega'\rangle_P&=
\langle\omega, \phi_{\TT }(\omega')-d_{\TT }(\omega') \rangle_P\\
&=-(-1)^{|\omega|_{\TT }-1}\langle\omega, d_{\TT }(\omega')\rangle_{\TT
 }\\
&=(-1)^{|\omega|_{\TT }}\langle  \omega,d_{\TT }(\omega) \rangle_{\TT }
\end{align*}
so that we get
\[
d_{\mathfrak{D}}\langle \omega,\omega'\rangle_P=\langle
 d_P\omega,\omega'\rangle_P+
(-1)^{|\omega|_{\TT }}\langle \omega,d_P\omega'\rangle_P
\]
which is correct since $|\omega|_{\TT }=|\omega|_P+3(\mod 2)$.

\medskip

\noindent \textbf{Case 5 } One has for $\omega\in \Omega_{{\TT }/l}$
\begin{align*}
d_{\mathfrak{D}}\langle \omega,\II\rangle_P=0
\end{align*}
and
\begin{align*}
\langle d_P\omega,\II\rangle_P&=\langle \phi_{\TT
 }(\omega)-d_c(\omega),\II\rangle\\
&=0
\end{align*}
and
\begin{align*}
\langle \omega,d_P\II\rangle_P&=0
\end{align*}
So nothing to prove here!

\medskip

\noindent \textbf{Case 6 } The last case is about
 $\langle\II,\II\rangle_P$
but this is trivial.

\medskip

We can now conclude
\begin{theorem} The Ginzburg algebra ${\mathfrak{D}}$ is
 $3$-Calabi-Yau.
\end{theorem}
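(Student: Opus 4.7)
The plan is to simply harvest the conclusion from the data already assembled in the appendix. All the real work, namely building the explicit bimodule resolution $P$ of $\mathfrak{D}$, exhibiting on $P$ a non-degenerate symmetric pairing $\langle-,-\rangle_P$ of degree $3$, and checking that this pairing is compatible with the differential (the six cases above), has been done. What remains is a purely formal translation from the language of DG-pairings into the language of derived bimodule duals.

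First I would check that $P$ is strictly perfect as a $\mathfrak{D}$-bimodule, hence in particular cofibrant. Each of the three summands $\mathfrak{D}\otimes_l l\II\otimes_l\mathfrak{D}$, $\mathfrak{D}\otimes_{\TT}\Omega_{\TT/l}\otimes_{\TT}\mathfrak{D}$, and $\mathfrak{D}\otimes_l lDc\otimes_l\mathfrak{D}$ is finitely generated projective as a graded $\mathfrak{D}^e$-module, because $A$ is quasi-free (so $\Omega_{A/l}$ is finitely generated projective over $A^e$), hence $\Omega_{\TT/l}$ is finitely generated projective over $\TT^e$, and the filtration by tensor-degree in $c$ exhibits $P$ as a strictly perfect dg bimodule. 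Combined with the quasi-isomorphism $P\to\mathfrak{D}$, this shows that $\mathfrak{D}$ is homologically smooth.

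Next, by Lemma~\ref{ref-5.1-4} (extended through the case-by-case compatibility check), the non-degeneracy of $\langle-,-\rangle_P$ gives an isomorphism of dg $\mathfrak{D}$-bimodules
\[
P \;\iso\; \Sigma^{3}\,P^{\ast} \;=\; \Sigma^{3}\,\Hom_{\mathfrak{D}^e}(P,\mathfrak{D}\otimes\mathfrak{D}).
\]
Since $P$ is a cofibrant replacement of $\mathfrak{D}$ and is moreover strictly perfect, passing to the derived category of $\mathfrak{D}$-bimodules turns this into an isomorphism
\[
\mathfrak{D} \;\iso\; \Sigma^{3}\,\RHom_{\mathfrak{D}^e}(\mathfrak{D},\mathfrak{D}^e)
\]
in $\cd(\mathfrak{D}^e)$, that is, $\mathfrak{D}\iso V_3\mathfrak{D}$ in the notation of Section~\ref{ss:The-Calabi-Yau-property}.

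Finally, the symmetry of $\langle-,-\rangle_P$ (as a bimodule pairing satisfying $\langle x,y\rangle_P=\langle x,y\rangle_P^{\opp}$) is exactly the condition that the induced isomorphism $P\to\Sigma^3 P^\ast$ equals its own preduality-transpose. Under the identifications above this translates into the $(V_3,\phi_3)$-symmetry of the resulting morphism $\mathfrak{D}\to V_3\mathfrak{D}$, which is the Calabi-Yau property. I expect no further obstacle: the genuine content was the construction of the symplectic-like pairing on $\TT(A,z)$ via the double Schouten--Nijenhuis bracket and its extension across the new generator $c$ killing $E$, plus the sign bookkeeping in the six cases; the passage from there to the $3$-Calabi-Yau property is a formal dictionary.
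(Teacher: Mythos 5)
Your proposal is correct and follows essentially the same route as the paper's own proof: take the strictly perfect resolution $P$ of $\mathfrak{D}$, use the non-degenerate symmetric DG-pairing of degree $3$ to identify $\Hom_{\mathfrak{D}^e}(P,\mathfrak{D}\otimes\mathfrak{D})$ with $\Sigma^{-3}P\cong\Sigma^{-3}\mathfrak{D}$, and read off self-duality from the symmetry of the pairing. The only difference is that you spell out the homological smoothness of $\mathfrak{D}$ and the formal dictionary between pairings and the preduality formalism, which the paper leaves implicit.
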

\begin{proof} We need to prove
\begin{equation}
\label{ref-6.1-7}
\RHom_{{\mathfrak{D}}^e}({\mathfrak{D}},{\mathfrak{D}}\otimes
 {\mathfrak{D}})\cong \Sigma^{-3} {\mathfrak{D}}
\end{equation}
in $D({\mathfrak{D}}^e)$ and moreover this isomorphism must be self
 dual. We have
\begin{align*}
\RHom_{{\mathfrak{D}}^e}({\mathfrak{D}},{\mathfrak{D}}\otimes
 {\mathfrak{D}})&\cong\Hom_{{\mathfrak{D}}^e}(P,{\mathfrak{D}}\otimes
 {\mathfrak{D}})\\
&\cong \Sigma^{-3} P\\
&\cong \Sigma^{-3} {\mathfrak{D}}
\end{align*}
where the second isomorphism is obtained from the pairing
$\langle-,-\rangle_P$. Self duality follows from the fact
that $\langle-,-\rangle_P$ is symmetric.
\end{proof}
\subsection{A word on quivers}
Assume now that $V$ is a finitely generated $l$-bimodule and put
$A=T_l V$. Thus $A$ is the path algebra of a quiver. We remind
the reader on the concrete interpretation of ${\mathfrak{D}(A,z)}$ in
 this case.
This is taken from \cite{Ginzburg06}. Let $(t_i)_i$ a $k$-basis of $V$
where for each $i$ we have $t(i)$, $h(i)$ such that $t_i\in
 e_{t(i)}Ve_{h(i)}$.

Then we may define operations
\begin{gather*}
\left(\frac{\partial\ }{\partial t^i}\right)^+:A/[A,A]\r A\\
\frac{\partial\ }{\partial t^i}:A\r A\otimes A
\end{gather*}
where the second one is the element of ${\TT }_{A/l}$ with the property
\[
\frac{\partial t^j}{\partial t^i}=
\delta^{ij}(e_{t(i)}\otimes e_{h(i)})
\]
and the first one is obtained from the first by the following
 commutative
diagram
\[
\begin{CD}
A @>>> A/[A,A]\\
@V\frac{\partial\ }{\partial t^i} VV @VV\left(\frac{\partial\
 }{\partial t^i}\right)^+   V\\
A \otimes A @>a\otimes b \mapsto ba>> A
\end{CD}
\]
By \cite[Prop.\ 6.2.2(2)]{VandenBergh08a} we have
\begin{equation}
\label{ref-7.1-8}
E=\sum_i \left[\frac{\partial\ }{\partial t^i},t^i\right]
\end{equation}
as elements of ${\TT }_{A/l}$.

\medskip

Pick $z\in \oplus_i e_iAe_i$.
\begin{lemma}\cite{Ginzburg06} As graded algebras there is a canonical
 isomorphism
\[
{\mathfrak{D}(A,z)}=T_l(V\oplus \Sigma V^\ast\oplus kc)
\]
Furthermore if $t^i$ is the dual basis to $t_i$ then the differential
on ${\mathfrak{D}(A,z)}$ is given by
\begin{equation}
\label{ref-7.2-9}
\begin{aligned}
dt^i&=0\\
dt_i&=\left(\frac{\partial z}{\partial x^i}\right)^+\\
dc&=\sum_i \left[t_i,t^i\right]
\end{aligned}
\end{equation}
\end{lemma}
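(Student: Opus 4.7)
The plan is first to identify the underlying graded algebra structure, and then to translate the three differentials appearing in the construction of $\mathfrak{D}(A,z) = \TT(A,z) *_l l[c]$ into the stated formulas.

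For the graded structure, since $A = T_l V$ is the tensor algebra of $V$ over $l$, the bimodule $\Omega_{A/l}$ is free on $V$ (i.e.\ $\Omega_{A/l} \cong A \otimes_l V \otimes_l A$), so its bimodule dual $\TT_{A/l}$ is free on the dual basis $V^*$. Choosing $(t^i)$ dual to $(t_i)$, we identify $t^i \in V^*$ with the double derivation $\partial/\partial t^i$ characterized by the formula quoted from \cite{VandenBergh08a}. Then
\[
\TT(A,z) = T_A(\Sigma \TT_{A/l}) \cong T_A(A\otimes_l \Sigma V^*\otimes_l A) \cong T_l(V \oplus \Sigma V^*)
\]
as graded algebras, and the free product $*_l$ with $l[c]$ yields $T_l(V\oplus \Sigma V^*\oplus kc)$, as claimed.

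For the differentials, I would treat each generator in turn. On $A \subset \TT(A,z)$ the differential is zero by the defining property of the contraction differential of section~\ref{ss:Ginzburg-dg-categories}, which gives the vanishing on the $V$-generators. For a generator coming from $\Sigma \TT_{A/l}$, the same definition gives $d\delta = \delta(z)''\delta(z)'$; applied to $\delta = \partial/\partial t^i$ this is precisely the composition of the canonical projection $A \to A/[A,A]$ with $(\partial/\partial t^i)^+$ evaluated on $z$, i.e.\ $(\partial z/\partial t^i)^+$. (Here the hypothesis $z \in \sum_i e_i A e_i$ ensures that $z$ defines a well-defined element of $A/[A,A]$ to which $(\partial/\partial t^i)^+$ can be applied.) Finally, $dc = E$ by the definition of $\mathfrak{D}(A,z)$, and substituting the explicit expression (\ref{ref-7.1-8}) for $E$ in terms of the chosen basis yields $dc = \sum_i [\partial/\partial t^i, t^i] = \sum_i [t_i, t^i]$ under the identification $\partial/\partial t^i \leftrightarrow t^i$ (with the graded commutator on the right, which is the correct sign convention once the degree shift on $V^*$ is taken into account).

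The only genuine work is the last point: verifying that equation~(\ref{ref-7.1-8}) for the canonical double derivation $E$ translates into the commutator formula for $dc$. Everything else is unwinding of definitions. The main obstacle will therefore be bookkeeping the Koszul signs coming from the shift $\Sigma$ when passing from $\TT_{A/l}$ to $\Sigma \TT_{A/l}$, and making sure that the $(\cdot)^+$ operation is compatible with the double bracket formula $d\delta = \delta(z)'' \delta(z)'$ defining the contraction differential; this amounts to the already-verified identity from~\cite{VandenBergh08a} and does not introduce new content.
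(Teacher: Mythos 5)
Your argument is correct and is essentially the paper's own proof written out in more detail: identify the graded algebra using that $\Omega_{A/l}$, and hence $\TT_{A/l}$, is free over $A=T_lV$; read the differential off the defining formulas $da=0$, $d\delta=\delta(z)''\delta(z)'$ of the contraction differential; and quote \eqref{ref-7.1-8} for $dc=E$. The one point to watch is index placement: you identify the double derivation $\partial/\partial t^i$ with the dual basis vector $t^i\in V^\ast$, and under that convention your computation yields $dt_i=0$ and $dt^i=(\partial z/\partial t^i)^+$, i.e.\ the transpose of the displayed formulas \eqref{ref-7.2-9}, whereas the paper's proof instead \emph{sets} $t_i:=\partial/\partial t^i$, so that in \eqref{ref-7.2-9} the lower-index symbols denote the degree $-1$ dual generators and the upper-index ones the arrows of $V$ (this also accounts for the order in $dc=\sum_i[t_i,t^i]$, the two orderings of the supercommutator of an odd and an even element differing by a sign). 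This is purely a relabelling forced by the statement's own ambiguous phrasing and does not affect the substance of your verification.
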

\begin{proof}
Put $t_i=\frac{\partial\ }{\partial t^i}$.
We get ${\TT(A,z) }=T_l(V\oplus V^\ast)$ where $(t^i)$ is the basis for
 $V^\ast$,
dual to $(t_i)_i$.

The differential
$d$ on ${\TT(A,z) }$ has the property.
\begin{align*}
dt^i&=0\\
dt_i&=\left(\frac{\partial z}{\partial x^i}\right)^+
\end{align*}
Finally the algebra ${\mathfrak{D}(A,z)}$ is obtained by adjoining $c$
 such that
\[
dc=E=\sum_i \left[t_i,t^i\right]\qed
\]
where we have used \eqref{ref-7.1-8}. \end{proof}
\subsection{A word on $\Ext$-algebras}
The advantage of the presentation \eqref{ref-7.2-9} is that
we can immediately read off the $A_\infty$-structure on the
$\Ext$-algebra of ${\mathfrak{D}(A,z)}$. This works more generally as
 follows. Assume
that $W$ is a finite dimensional $l$-bimodule and we have
a DG-algebra structure on $B=T_lW$ compatible with the canonical
 augmentation
$B\r l$. Then for $w\in W$ we may write
\[
dw=\sum_{n=1}^\infty b_n^\ast(w)
\]
where  the $b_n^\ast$ are maps
\[
 b_n^\ast:W\r W^{\otimes n}
\]
of degree $1$. Dualizing we get maps of degree $1$
\[
b_n:(W^\ast)^{\otimes n}\r W^\ast
\]
which define an $A_\infty$ structure on $\Sigma^{-1}(W^\ast)$ (without
unit). It follows
from tbe bar-cobar machinery  that the $A_\infty$-algebra
$l\oplus \Sigma^{-1}(W^\ast)$ corresponds to $\RHom_B(l,l)$.

Now let $V,A,z,{\mathfrak{D}(A,z)}$ be as before
and assume that $z$ contains no linear terms. We put $W=V\oplus \Sigma
 V^\ast
\oplus kc$. Thus ${\mathfrak{D}(A,z)}=T_lW$ and the $\Ext$-algebra of
 ${\mathfrak{D}(A,z)}$ as a graded
vector space\footnote{If $z$ contains quadratic terms then
this algebra has a non-trivial differential so it is not strictly
 speaking
the $\Ext$-algebra} is $l\oplus \Sigma^{-1}W^\ast=l\oplus \Sigma^{-1}
 V^\ast\oplus \Sigma^{-2} V \oplus k\Sigma^{-1} (c^\ast)$.

One checks that the $A_\infty$-operations are the pairings
 $V^\ast\otimes V\r l$ and
$V\otimes V^\ast\r l$ as well $n$-ary operations $(V^\ast)^{\otimes n}
 \r \Sigma^{-1} V$ which are obtained from  the degree $n+1$-part
 $z_{n+1}\in V^{\otimes(n+1)}$
of the superpotential $z$.

% \bibliographystyle{amsplain}
% \bibliography{stanKeller}

\def\cprime{$'$} \def\cprime{$'$}
\providecommand{\bysame}{\leavevmode\hbox to3em{\hrulefill}\thinspace}
\providecommand{\MR}{\relax\ifhmode\unskip\space\fi MR }
% \MRhref is called by the amsart/book/proc definition of \MR.
\providecommand{\MRhref}[2]{%
  \href{http://www.ams.org/mathscinet-getitem?mr=#1}{#2}
}
\providecommand{\href}[2]{#2}

\end{document}